\newtheorem{theorem}{Theorem}
\newtheorem{ex}{Example}
\newtheorem{lemma}[theorem]{Lemma}
\newtheorem{prop}[theorem]{Proposition}
\newtheorem{remark}{Remark}
\newtheorem{corollary}[theorem]{Corollary}
\newtheorem{claim}{Claim}
\newenvironment{proof-sketch}{\noindent{\bf Sketch of Proof}\hspace*{1em}}{\qed\bigskip}
\newcommand{\RR}{\mathbb R}
\newcommand{\NN}{\mathbb N}
\newcommand{\ZZ}{\mathbb Z}
\renewcommand{\leq}{\leqslant}
\renewcommand{\geq}{\geqslant}
\begin{document}
\title[Nonlinear nonhomogeneous boundary value problems]{Nonlinear nonhomogeneous boundary value problems with competition phenomena}
\author[N.S. Papageorgiou]{Nikolaos S. Papageorgiou}
\address[Nikolaos S. Papageorgiou]{National Technical University, Department of Mathematics,
				Zografou Campus, Athens 15780, Greece \& Institute of Mathematics, Physics and Mechanics, Jadranska 19, 1000 Ljubljana, Slovenia}
\email{\tt npapg@math.ntua.gr}
\author[V.D. R\u{a}dulescu]{Vicen\c{t}iu D. R\u{a}dulescu}
\address[Vicen\c{t}iu D. R\u{a}dulescu]{Institute of Mathematics, Physics and Mechanics, Jadranska 19, 1000 Ljubljana, Slovenia \& Faculty of Applied Mathematics, AGH University of Science and Technology, al. Mickiewicza 30, 30-059 Krak\'ow, Poland}
\email{\tt vicentiu.radulescu@imar.ro}
\author[D.D. Repov\v{s}]{Du\v{s}an D. Repov\v{s}}
\address[Du\v{s}an D. Repov\v{s}]{Faculty of Education and Faculty of Mathematics and Physics, University of Ljubljana,  \& Institute of Mathematics, Physics and Mechanics, Jadranska 19, 1000 Ljubljana, SloveniaSI-1000 Ljubljana, Slovenia}
\email{\tt dusan.repovs@guest.arnes.si}
\keywords{Nonlinear nonhomogeneous differential operator, nonlinear boundary condition, nonlinear regularity theory, nonlinear maximum principle, strong comparison principle, constant sign solutions, extremal constant sign solutions, nodal solutions, critical groups.\\
\phantom{aa} 2010 AMS Subject Classification: 35J20 (Primary); 35J60, 58E05 (Secondary)}
\begin{abstract}
We consider a nonlinear boundary value problem driven by a nonhomogeneous differential operator. The problem exhibits competing nonlinearities with a superlinear (convex) contribution coming from the reaction term and a sublinear (concave) contribution coming from the parametric boundary (source) term. We show that for all small parameter values $\lambda>0$, the problem has at least five nontrivial smooth solutions, four of constant sign and one nodal. We also produce extremal constant sign solutions and determine their monotonicity and continuity properties as the parameter $\lambda>0$ varies. In the semilinear case we produce a sixth nontrivial solution but without any sign information. Our approach uses variational methods together with truncation and perturbation techniques, and Morse theory.
\end{abstract}
\maketitle

\section{Introduction}

Let $\Omega\subseteq\RR^N$ be a bounded domain with a $C^2$-boundary $\partial\Omega$. In this paper we study the following nonlinear, nonhomogeneous elliptic problem
\begin{equation}
	\left\{\begin{array}{ll}
		-{\rm div}\, a(Du(z))=f(z,u(z))&\mbox{in}\ \Omega,\\
		\frac{\partial u}{\partial n_a}=\lambda\beta(z,u)&\mbox{on}\ \partial\Omega.
	\end{array}\right\}\tag{$P_{\lambda}$} \label{eqP}
\end{equation}

In this problem $a:\RR^N\rightarrow\RR^N$ is a strictly monotone, continuous map which satisfies certain other regularity and growth conditions, listed in hypotheses $H(a)$ in Section 2. These hypotheses are general enough to incorporate in our framework several differential operators of interest, such as, e.g., the $p$-Laplacian. The reaction term $f(z,x)$ is a Carath\'eodory function (that is, for all $x\in\RR$, $z\mapsto f(z,x)$ is measurable and for almost all $z\in\Omega$, $x\mapsto f(z,x)$ is continuous) which satisfies the well-known Ambrosetti-Rabinowitz condition (AR-condition for short) in the $x$-variable, hence exhibiting $(p-1)$-superlinear growth near $\pm\infty$. In the boundary condition, $\frac{\partial u}{\partial n_a}$ denotes the generalized normal derivative corresponding to the differential operator $u\mapsto {\rm div}\, a(Du)$ and is defined by
$$\frac{\partial u}{\partial n_a}=(a(Du),n)_{\RR^N}\ \mbox{for all}\ u\in W^{1,p}(\Omega),$$
with $n(\cdot)$ being the outward unit normal on $\partial\Omega$. This kind of generalized normal derivative is dictated by the nonlinear Green's identity (see Gasinski and Papageorgiou \cite[p. 210]{13}  and it was also used by Lieberman \cite{21}). The boundary function $\beta(z,x)$ is continuous on $\partial\Omega\times\RR$ and it satisfies certain other regularity and growth conditions listed in hypotheses $H(\beta)$ in Section 3. In fact, $\beta(z,\cdot)$ exhibits strict $(p-1)$-sublinear growth near $\pm\infty$. So, we see that problem \eqref{eqP} has competing nonlinearities. We refer to a convex (superlinear) input coming from the reaction term $f(z,x)$ and a concave (sublinear) input resulting from the source (boundary) term.

The study of problems with competition phenomena was initiated with the seminal paper of Ambrosetti, Brezis and Cerami \cite{2} for semilinear Dirichlet equations. In their work both competing nonlinearities appear in the reaction term $f(z,x)$, which has the form
$$f(z,x)=f(x)=\lambda|x|^{q-2}x+|x|^{r-2}x\ \mbox{for all}\ x\in\RR,$$
with $\lambda>0,\ 1<q<2<r\leq 2^*=\left\{\begin{array}{ll}
	\frac{2N}{N-2}&\mbox{if}\ N\geq 3\\
	+\infty&\mbox{if}\ N=1,2
\end{array}\right.$. Since then there has been a lot of work in this direction, extending the results of \cite{2} to nonlinear equations. In contrast, in the present paper the concave term comes from the boundary condition. The study of such problems is still lagging behind. In this direction, there are only the semilinear works of Furtado and Ruviaro \cite{11}, Garcia Azorero, Peral Alonso and Rossi \cite{12} and Hu and Papageorgiou \cite{20}.

In this paper, we prove a multiplicity theorem which says that for small values of the parameter $\lambda>0$, the problem has at least five nontrivial smooth solutions, four of constant sign and one nodal. We also show the existence of extremal constant sign solutions, that is, a smallest positive solution $u^*_{\lambda}$ and a biggest negative solution $v^*_{\lambda}$, and we investigate the monotonicity and continuity properties of the maps $\lambda\mapsto u^*_{\lambda}$ and $\lambda\mapsto v^*_{\lambda}$. Finally, in the semilinear case, we generate a sixth nontrivial smooth solution (without being able to provide any sign information).

Our approach uses variational methods based on the critical point theory, combined with suitable truncation-perturbation and comparison techniques, and Morse theory.

\section{Preliminaries -- Hypotheses}

In this section we present the main mathematical tools which we will use in the sequel and we prove some auxiliary results which will be needed later.
In this section we also fix our notation and we have gathered all the hypotheses on the data of problem \eqref{eqP} which will be used to prove our results. We also state the main results of this work, in order for the reader to have a feeling of what is achieved in this paper.

Let $X$ be a Banach space and let $X^*$ be its topological dual. By $\left\langle \cdot,\cdot\right\rangle$ we denote the duality brackets for the pair $(X^*,X)$. Let $\varphi\in C^1(X,\RR)$. We say that $\varphi$ satisfies the ``Cerami condition'' (the ``C-condition'' for short), if the following property holds:
\begin{center}
``Every sequence $\{u_n\}_{n\geq 1}\subseteq X$ such that $\{\varphi(u_n)\}_{n\geq 1}\subseteq\RR$ is bounded and
$$(1+||u_n||)\varphi'(u_n)\rightarrow 0\ \mbox{in}\ X^*\ \mbox{as}\ n\rightarrow\infty,$$
admits a strongly convergent subsequence''.
\end{center}

This compactness-type condition on the functional $\varphi$ is needed in the critical point theory because the ambient space need not be locally compact (being in general infinite dimensional). Using this compactness-type condition, one can prove a deformation theorem describing the change of the topological structure of the sublevel sets of $\varphi$ along the negative gradient or pseudogradient flow. The deformation theorem leads to the minimax theory of the critical values of $\varphi$. Prominent in that theory is the so-called ``mountain pass theorem'' due to Ambrosetti and Rabinowitz \cite{3}. Here we state it in a slightly more general form (see, for example, Gasinski and Papageorgiou \cite[p. 648]{13}).

\begin{theorem}\label{th1}
	Let $X$ be a Banach space. Suppose that $\varphi\in C^1(X,\RR)$  satisfies the C-condition, and $u_0,u_1\in X$ satisfy $||u_1-u_0||>\rho>0$
	$$\max\{\varphi(u_0),\varphi(u_1)\}<\inf[\varphi(u):||u-u_0||=\rho]=m_{\rho}.$$
	Let $c=\inf\limits_{\gamma\in\Gamma}\max\limits_{0\leq t\leq 1}\varphi(\gamma(t))$ with $\Gamma=\{\gamma\in C([0,1],X):\gamma(0)=u_0,\gamma(1)=u_1\}$. Then $c\geq m_{\rho}$ and $c$ is a critical value of $\varphi$ (that is, there exists $\hat{u}\in X$ such that $\varphi'(\hat{u})=0$ and $\varphi(\hat{u})=c$).
\end{theorem}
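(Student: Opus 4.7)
The plan is to prove the two assertions in succession: first the easy topological bound $c\ge m_\rho$, then the harder deformation-based argument that $c$ is actually attained as a critical value.

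For the bound $c\ge m_\rho$, I would argue by connectedness. Fix any path $\gamma\in\Gamma$. Since $\|\gamma(0)-u_0\|=0<\rho$ and $\|\gamma(1)-u_0\|=\|u_1-u_0\|>\rho$, the continuous function $t\mapsto\|\gamma(t)-u_0\|$ must take the value $\rho$ at some $t_0\in(0,1)$. Hence $\varphi(\gamma(t_0))\ge m_\rho$, so $\max_{t\in[0,1]}\varphi(\gamma(t))\ge m_\rho$. Taking the infimum over $\gamma\in\Gamma$ yields $c\ge m_\rho$.

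For the main claim, I would argue by contradiction. Suppose $c$ is not a critical value, i.e.\ $K_c:=\{u\in X:\varphi'(u)=0,\ \varphi(u)=c\}$ is empty. The key technical tool is a \emph{quantitative deformation lemma adapted to the Cerami condition}: for every neighborhood $U$ of $K_c$ (here we may take $U=\emptyset$) and every $\bar\varepsilon>0$, there exist $\varepsilon\in(0,\bar\varepsilon)$ and a continuous map $\eta:[0,1]\times X\to X$ such that (i) $\eta(t,u)=u$ whenever $|\varphi(u)-c|\ge\bar\varepsilon$, (ii) $t\mapsto\varphi(\eta(t,u))$ is nonincreasing, and (iii) $\eta(1,\varphi^{c+\varepsilon})\subseteq\varphi^{c-\varepsilon}$, where $\varphi^a=\{\varphi\le a\}$. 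I would choose $\bar\varepsilon<\tfrac12(c-\max\{\varphi(u_0),\varphi(u_1)\})$, which is strictly positive since $c\ge m_\rho>\max\{\varphi(u_0),\varphi(u_1)\}$. Then, by the definition of $c$, pick $\gamma\in\Gamma$ with $\max_{t\in[0,1]}\varphi(\gamma(t))<c+\varepsilon$, and set $\tilde\gamma(t):=\eta(1,\gamma(t))$. Property (i) together with $\varphi(u_i)<c-\bar\varepsilon$ forces $\tilde\gamma(0)=u_0$ and $\tilde\gamma(1)=u_1$, so $\tilde\gamma\in\Gamma$; property (iii) gives $\max_{t\in[0,1]}\varphi(\tilde\gamma(t))\le c-\varepsilon$, contradicting the definition of $c$.

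The real work, and the main obstacle, is establishing the deformation lemma under the Cerami (as opposed to the Palais--Smale) condition. To do this I would first use the C-condition together with $K_c=\emptyset$ to show a uniform lower bound of the form
\[
(1+\|u\|)\,\|\varphi'(u)\|_{X^*}\ge\alpha>0
\]
on the strip $S:=\varphi^{-1}([c-\bar\varepsilon,c+\bar\varepsilon])$ for sufficiently small $\bar\varepsilon$; if no such bound existed, a diagonal extraction would yield a Cerami sequence at level $c$ whose limit, by the C-condition and continuity of $\varphi'$, would lie in $K_c$. Next I would construct a locally Lipschitz pseudogradient vector field $V:X\setminus K_\varphi\to X$ for $\varphi$, rescale it by the factor $(1+\|u\|)$ to compensate for the Cerami weight, and define the flow $\eta$ as the solution of $\dot\sigma=-\chi(\sigma)\,(1+\|\sigma\|)\,V(\sigma)/\|V(\sigma)\|^2$, with a Lipschitz cutoff $\chi$ supported in $S$ and equal to $1$ on $\varphi^{-1}([c-\varepsilon,c+\varepsilon])$. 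A direct computation along trajectories, using the pseudogradient inequality and the lower bound above, yields the required descent property (iii). Once this deformation lemma is in hand, the contradiction argument above completes the proof.
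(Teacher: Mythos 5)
Your argument is correct in outline, but note that the paper does not prove this statement at all: Theorem \ref{th1} is quoted as a known form of the Ambrosetti--Rabinowitz mountain pass theorem, with the proof delegated to Gasinski and Papageorgiou \cite[p. 648]{13}. What you have written is essentially the standard proof found there (and going back to Bartolo--Benci--Fortunato for the Cerami setting): the connectedness argument for $c\ge m_\rho$ is exactly right, and the contradiction scheme via a quantitative deformation lemma adapted to the C-condition is the expected route, including the correct observation that failure of the bound $(1+\|u\|)\|\varphi'(u)\|_{X^*}\ge\alpha$ on a thin strip around level $c$ produces a Cerami sequence whose limit would lie in $K_{\varphi}^c$. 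The one step you pass over too quickly is global existence of the flow $\dot\sigma=-\chi(\sigma)(1+\|\sigma\|)V(\sigma)/\|V(\sigma)\|^2$: since $\|V(u)\|\ge\|\varphi'(u)\|\ge\alpha/(1+\|u\|)$ on the strip, the field only satisfies a bound of the form $\|\dot\sigma\|\le(1+\|\sigma\|)^2/\alpha$, which by itself allows finite-time blow-up. The standard remedy is to compare the travel with the decrease of the functional: along trajectories one has $\frac{d}{dt}\varphi(\sigma)\le-\chi(\sigma)(1+\|\sigma\|)/4$, hence $\frac{d}{dt}\log(1+\|\sigma\|)\le\frac{4}{\alpha}\bigl(-\frac{d}{dt}\varphi(\sigma)\bigr)$, and since the total drop of $\varphi$ on the support of $\chi$ is at most $2\bar\varepsilon$, the trajectory stays in a bounded set and the flow extends to $[0,1]$. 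With that remark added, your sketch is a complete and standard proof of the theorem; it neither simplifies nor generalizes the cited argument, it reconstructs it.
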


Let $\vartheta\in C^1(0,\infty)$ with $\vartheta(t)>0$ for all $t>0$ and assume that
\begin{equation}\label{eq1}
	0<\hat{c}\leq\frac{\vartheta'(t)t}{\vartheta(t)}\leq c_0\ \mbox{and}\ c_1t^{p-1}\leq\vartheta(t)\leq c_2(1+t^{p-1})\ \mbox{for all}\ t>0,\ \mbox{with}\ c_1,c_2>0.
\end{equation}

The hypotheses on the map $y\mapsto a(y)$ involved in the definition of the differential operator in problem \eqref{eqP}, are the following:

\smallskip
$H(a):$ $a(y)=a_0(|y|)y$ for all $y\in\RR^N$, with $a_0(t)>0$ for all $t>0$ and
\begin{itemize}
	\item[(i)] $a_0\in C^1(0,+\infty),\ t\mapsto a_0(t)t$ is strictly increasing on $(0,+\infty)$, $a_0(t)t\rightarrow 0^+$ as $t\rightarrow 0^+$ and
	$$\lim\limits_{t\rightarrow 0^+}\frac{a'_0(t)t}{a_0(t)}>-1;$$
	\item[(ii)] there exists $c_3>0$ such that
	$$|\nabla a(y)|\leq c_3\frac{\vartheta(|y|)}{|y|}\ \mbox{for all}\ y\in \RR^N\backslash\{0\};$$
	\item[(iii)] $(\nabla a(y)\xi,\xi)_{\RR^N}\geq\frac{\vartheta(|y|)}{|y|}|\xi|^2$ for all $y\in\RR^N\backslash\{0\}$ and all $\xi\in\RR^N$;
	\item[(iv)] if $G_0(t)=\int^t_0a_0(s)sds$ for $t>0$, then there exists $\tau\in(1,p)$ such that
	\begin{eqnarray*}
		&&t\mapsto G_0(t^{1/\tau})\ \mbox{is convex},\ 0\leq\liminf\limits_{t\rightarrow 0^+}\frac{G_0(t)}{t^{\tau}}\leq\limsup\limits_{t\rightarrow 0^+}\frac{G_0(t)}{t^{\tau}}\leq\tilde{c},\\
		&&c_4t^p\leq a_0(t)t-\tau G_0(t)\ \mbox{for all}\ t>0\ \mbox{and some}\ c_4>0,\\
		&&-\bar{c}\leq pG_0(t)-a_0(t)t^2\ \mbox{for all}\ t>0\ \mbox{and some}\ \bar{c}>0.
	\end{eqnarray*}
\end{itemize}
\begin{remark}
	Hypotheses $H(a)(i),(ii),(iii)$ were motivated by the nonlinear regularity theory of Lieberman \cite{21} and the nonlinear maximum principle of Pucci and Serrin \cite[pp. 111, 120]{33}. Hypothesis $H(a)(iv)$ serves the particular needs of our problem. However, this is a mild restriction and it is satisfied in all cases of interest, as the examples below show.
\end{remark}

	Clearly hypotheses $H(a)$ imply that $G_0(\cdot)$ is strictly convex and strictly increasing. We set
	$$G(y)=G_0(|y|)\ \mbox{for all}\ y\in\RR^N.$$

Then $G(\cdot)$ is convex, $G(0)=0$, and
$$\nabla G(0)=0,\nabla G(y)=G'_0(|y|)\frac{y}{|y|}=a_0(|y|)y=a(y)\ \mbox{for all}\ y\in\RR^N\backslash\{0\}.$$

So, $G(\cdot)$ is the primitive of the map $a(\cdot)$. Using the convexity of $G(\cdot)$ and  $G(0)=0$, we have
\begin{equation}\label{eq2}
	G(y)\leq(a(y),y)_{\RR^N}\ \mbox{for all}\ y\in \RR^N.
\end{equation}

Hypotheses $H(a)(i),(ii),(iii)$ and (\ref{eq1}), lead to the following lemma which summarizes the main properties of the map $y\mapsto a(y)$.
\begin{lemma}\label{lem2}
	If hypotheses $H(a)(i),(ii),(iii)$ hold, then
	\begin{itemize}
		\item[(a)] $y\mapsto a(y)$ is strictly monotone, continuous, hence also maximal monotone;
		\item[(b)] $|a(y)|\leq c_4(1+|y|^{p-1})$ for all $y\in\RR^N$ and some $c_4>0$;
		\item[(c)] $(a(y),y)_{\RR^N}\geq\frac{c_1}{p-1}|y|^p$ for all $y\in\RR^N.$
	\end{itemize}
\end{lemma}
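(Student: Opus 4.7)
My plan is to exploit the scalar representation $a(y)=a_0(|y|)y$ and derive all three assertions from differential inequalities for the one-variable function $t\mapsto a_0(t)t$ on $(0,\infty)$, obtained by specializing $H(a)(iii)$ to $\xi=y$ and combining it with $H(a)(ii)$. The key computation is that for $y\neq 0$ and $t=|y|$,
$$(\nabla a(y)y,y)_{\RR^N}=a_0'(t)t^3+a_0(t)t^2=t^2(a_0(t)t)'.$$
Taking $\xi=y$ in $H(a)(iii)$ yields the lower bound $(a_0(t)t)'\geq \vartheta(t)/t$, while the Cauchy--Schwarz inequality together with $H(a)(ii)$ gives the companion upper bound $(a_0(t)t)'\leq c_3\vartheta(t)/t$.

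For (c), I would combine the lower differential inequality with $\vartheta(t)\geq c_1 t^{p-1}$ from (\ref{eq1}) to obtain $(a_0(t)t)'\geq c_1 t^{p-2}$. Integrating from $0$ to $t$ and using $a_0(t)t\to 0^+$ as $t\to 0^+$ (hypothesis $H(a)(i)$) gives $a_0(t)t\geq \tfrac{c_1}{p-1}t^{p-1}$, so $(a(y),y)_{\RR^N}=a_0(|y|)|y|^2\geq \tfrac{c_1}{p-1}|y|^p$.

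For (b), the upper differential inequality together with $\vartheta(t)\leq c_2(1+t^{p-1})$ gives $(a_0(t)t)'\leq c_3 c_2(t^{-1}+t^{p-2})$. Since $p>1$, on $[1,\infty)$ the right-hand side is controlled by a constant multiple of $t^{p-2}$, and integration yields an estimate of the form $a_0(t)t\leq \tilde{c}_1+\tilde{c}_2 t^{p-1}$ for $t\geq 1$. On $[0,1]$ the function $a_0(t)t$ is continuous and extends continuously to $0$ at $t=0$, hence is bounded. Combining the two regimes produces $|a(y)|=a_0(|y|)|y|\leq c_4(1+|y|^{p-1})$.

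For (a), continuity of $a$ on $\RR^N\setminus\{0\}$ is immediate from $a_0\in C^1(0,\infty)$, and continuity at the origin follows from $a_0(t)t\to 0^+$. Rather than integrating $\nabla a$ along a segment (which is awkward if the segment passes through $0$), I would deduce strict monotonicity from the potential: the strict monotonicity of $t\mapsto a_0(t)t$ in $H(a)(i)$ is exactly $G_0''>0$, so $G_0$ is strictly convex and strictly increasing on $[0,\infty)$; composing with the convex norm then makes $G(y)=G_0(|y|)$ strictly convex on $\RR^N$, and its gradient $a=\nabla G$ is therefore strictly monotone. Maximal monotonicity is then automatic since a continuous (hence hemicontinuous), everywhere-defined monotone operator on a reflexive space is maximal monotone. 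The only mildly delicate point is the behaviour at $t=0^+$ when integrating the differential inequalities in (b) and (c), but the normalization $a_0(t)t\to 0^+$ built into $H(a)(i)$ is precisely what is needed to close the argument.
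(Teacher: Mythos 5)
Your proof is correct. Note first that the paper itself gives no argument for Lemma \ref{lem2}: it is stated as a direct consequence of $H(a)(i),(ii),(iii)$ and (\ref{eq1}) (such lemmas are standard in this literature), so there is no in-paper proof to diverge from. Your radial reduction is exactly the expected route: testing $H(a)(iii)$ with $\xi=y$ gives $t^{2}\bigl(a_0(t)t\bigr)'=(\nabla a(y)y,y)_{\RR^N}\geq \vartheta(t)t$, and integrating $\bigl(a_0(t)t\bigr)'\geq c_1t^{p-2}$ from $0$ with the normalization $a_0(t)t\rightarrow 0^{+}$ yields (c) with the stated constant $\frac{c_1}{p-1}$; your two-regime treatment of (b) (integrating the upper differential inequality only on $[1,\infty)$ and using boundedness of $a_0(t)t$ on $(0,1]$) is a nice touch, since it sidesteps the fact that $\vartheta(t)/t$ need not be integrable near $0$ without first invoking the estimate $\vartheta(t)\leq\vartheta(1)t^{\hat c}$ that (\ref{eq1}) would otherwise be needed for. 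For (a), your potential-theoretic argument is sound: $G_0'(t)=a_0(t)t$ is strictly increasing and positive, so $G_0$ is strictly convex and strictly increasing, and $G(y)=G_0(|y|)$ is then strictly convex (the case $|y_1|=|y_2|$, $y_1\neq y_2$ being handled by the strict triangle inequality together with strict monotonicity of $G_0$), whence $a=\nabla G$ is strictly monotone; one small imprecision is your phrase that strict monotonicity of $a_0(t)t$ ``is exactly $G_0''>0$'' --- it only gives strict convexity of $G_0$ (the second derivative could vanish at isolated points), but strict convexity is all your argument uses. Continuity at the origin from $a_0(t)t\rightarrow 0^{+}$ and maximal monotonicity of a continuous, everywhere-defined monotone map are both correctly invoked.
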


This lemma and (\ref{eq2}) lead to the following growth estimates for the primitive $G(\cdot)$.
\begin{corollary}\label{cor3}
	If hypotheses $H(a)(i),(ii),(iii)$ hold, then $\frac{c_1}{p(p-1)}|y|^p\leq G(y)\leq c_5(1+|y|^p)$ for all $y\in\RR^N$ and some $c_5>0$.
\end{corollary}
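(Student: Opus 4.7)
The plan is to derive both bounds directly from Lemma \ref{lem2} and inequality (\ref{eq2}), bypassing any deeper analysis of $\vartheta$ since the relevant information about $a_0$ has already been packaged into Lemma \ref{lem2}.

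For the lower bound, I would rewrite Lemma \ref{lem2}(c) in radial form. Since $a(y)=a_0(|y|)y$, the estimate $(a(y),y)_{\RR^N}\geq \frac{c_1}{p-1}|y|^p$ becomes $a_0(t)t^{2}\geq \frac{c_1}{p-1}t^{p}$ for all $t>0$, hence $a_0(t)t\geq \frac{c_1}{p-1}t^{p-1}$. Integrating from $0$ to $t$ in the definition $G_0(t)=\int_0^t a_0(s)s\,ds$ yields
$$G_0(t)\;\geq\; \frac{c_1}{p-1}\int_0^t s^{p-1}\,ds\;=\;\frac{c_1}{p(p-1)}t^{p},$$
and recalling $G(y)=G_0(|y|)$ gives the left-hand inequality.

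For the upper bound, I would start with (\ref{eq2}), which tells us $G(y)\leq (a(y),y)_{\RR^N}$. Applying the Cauchy--Schwarz inequality together with Lemma \ref{lem2}(b),
$$(a(y),y)_{\RR^N}\;\leq\; |a(y)|\,|y|\;\leq\; c_4\bigl(1+|y|^{p-1}\bigr)|y|\;=\;c_4\bigl(|y|+|y|^{p}\bigr).$$
The elementary inequality $|y|\leq 1+|y|^{p}$ (immediate from splitting into the cases $|y|\leq 1$ and $|y|\geq 1$, using $p>1$) then yields $|y|+|y|^{p}\leq 2(1+|y|^{p})$, whence $G(y)\leq 2c_4(1+|y|^{p})$, and setting $c_5=2c_4$ gives the claim.

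There is really no serious obstacle here; the entire content of the corollary sits inside Lemma \ref{lem2} and (\ref{eq2}). The only mild care needed is in the lower bound, where one must pass from the pointwise estimate on $(a(y),y)_{\RR^N}$ to the integral $G_0(t)$ via the radial identity $a_0(t)t=\frac{(a(y),y)_{\RR^N}}{|y|^{2}}\cdot t$ (evaluated at $|y|=t$) — once this algebraic reduction is made, integration is immediate.
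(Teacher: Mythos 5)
Your proposal is correct and follows essentially the route the paper intends: the paper gives no separate proof, simply asserting that Lemma \ref{lem2} and (\ref{eq2}) yield the corollary, and your argument is exactly that deduction — integrating the radial form $a_0(t)t\geq\frac{c_1}{p-1}t^{p-1}$ of Lemma \ref{lem2}(c) to get the lower bound on $G_0$, and combining (\ref{eq2}) with Lemma \ref{lem2}(b) (plus $|y|\leq 1+|y|^p$) for the upper bound.
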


The examples which follow illustrate that hypotheses $H(a)$ cover many interesting cases.
\begin{ex}
The following maps satisfy hypotheses $H(a)$:
\begin{itemize}
	\item[(a)] $a(y)=|y|^{p-2}y$ with $1<p<\infty$.
	
	This map corresponds to the $p$-Laplacian differential operator defined by
	$$\Delta_pu={\rm div}\,(|Du|^{p-2}Du)\ \mbox{for all}\ u\in W^{1,p}(\Omega).$$
	\item[(b)] $a(y)=|y|^{p-2}y+|y|^{\tau-2}y$ with $1<\tau<p$
	
	This map corresponds to the $(p,\tau)$-differential operator defined by
	$$\Delta_pu+\Delta_{\tau}u\ \mbox{for all}\ u\in W^{1,p}(\Omega).$$

	Such differential operators arise in problems of mathematical physics. We mention the works of Benci, D'Avenia, Fortunato and Pisani \cite{4} (in quantum physics) and Cherfils and Ilyasov \cite{6} (in plasma physics). Recently, there have been some existence and multiplicity results for such equations. We mention the works of Cingolani and Degiovanni \cite{7}, Gasinski and Papageorgiou \cite{15}, Marano, Mosconi and Papageorgiou \cite{22}, Mugnai and Papageorgiou \cite{24}, Papageorgiou and R\u adulescu \cite{26, 27}, Papageorgiou, R\u adulescu and Repov\v{s} \cite{31}, Sun \cite{34}, and Sun, Zhang and Su \cite{35}.
	\item[(c)] $a(y)=(1+|y|^2)^{\frac{p-2}{2}y}$ with $1<p<\infty$.
	
	This map corresponds to the generalized $p$-mean curvature differential operator defined by
	$${\rm div}\, ((1+|Du|^2)^{\frac{p-2}{2}}Du)\ \mbox{for all}\ u\in W^{1,p}(\Omega).$$
	\item[(d)] $a(y)=|y|^{p-2}y\left[1+\frac{1}{1+|y|^p}\right]$ for all $u\in W^{1,p}(\Omega)$.
	
	This map corresponds to the differential operator
	$$\Delta_pu+{\rm div}\,\left(\frac{|Du|^{p-2}Du}{1+|Du|^p}\right)\ \mbox{for all}\ u\in W^{1,p}(\Omega).$$
	\item[(e)] $a(y)=|y|^{p-2}y+\ln(1+|y|^2)y$ with $1<p<\infty$.
	
	This map corresponds to the differential operator
	$$\Delta_pu+{\rm div}\,(\ln(1+|Du|^2)Du)\ \mbox{for all}\ u\in W^{1,p}(\Omega).$$
\end{itemize}
\end{ex}

We will use the Sobolev space $W^{1,p}(\Omega)$, the Banach space $C^1(\overline{\Omega})$ and the boundary Lebesgue spaces $L^q(\partial\Omega);1\leq q\leq\infty$. The Sobolev space $W^{1,p}(\Omega)$ is a Banach space for the norm
$$||u||=[\,||u||^p_p+||Du||^p_p\,]^{1/p}\ \mbox{for all}\ u\in W^{1,p}(\Omega).$$

The Banach space $C^1(\overline{\Omega})$ is an ordered Banach space with positive (order) cone
$$C_+=\{u\in C^1(\overline{\Omega}):u(z)\geq 0\ \mbox{for all}\ z\in\overline{\Omega})\}.$$

This cone has a nonempty interior given by
$${\rm int}\, C_+=\{u\in C_+:u(z)>0\ \mbox{for all}\ z\in\Omega,\ \frac{\partial u}{\partial n}\big|_{\partial\Omega\cap u^{-1}(0)}<0\ \mbox{if}\  \partial\Omega\cap u^{-1}(0)\not=\emptyset\}.$$
This cone contains the open set
$$D_+=\{u\in C_+:u(z)>0\ \mbox{for all}\ z\in\overline{\Omega}\}.$$

In fact, note that $D_+$ is the interior of $C_+$ when $C^1(\overline\Omega)$ is furnished with the relative $C(\overline\Omega)$-topology.

On $C^1(\overline\Omega)$ the $C^1(\overline\Omega)$-norm topology is stronger than the $C(\overline\Omega)$-norm topology. Therefore we have
$$D_+\subseteq{\rm int}\, C_+. $$

On $\partial\Omega$ we consider the $(N-1)$-dimensional Hausdorff (surface) measure $\sigma(\cdot)$. Using this measure, we can define in the usual way the Lebesgue spaces $L^q(\partial\Omega)$, $1\leq q\leq\infty$. From the theory of Sobolev spaces we know that there exists a unique continuous linear map $\gamma_0:W^{1,p}(\Omega)\rightarrow L^{\tau}(\partial\Omega)$, $\tau=\frac{Np-p}{N-p}$ if $p<N$, and $\tau\geq 1$ if $N\leq p$, known as the ``trace map'', such that
$$\gamma_0(u)=u|_{\partial\Omega}\ \mbox{for all}\ u\in W^{1,p}(\Omega)\cap C(\overline{\Omega}).$$

So, we can understand the trace map as an expression of the ``boundary values'' of a Sobolev function. We know that
$${\rm im}\,\gamma_0=W^{\frac{1}{p'},p}(\partial\Omega),\quad\mbox{where}\ \frac{1}{p}+\frac{1}{p'}=1\ \mbox{and}\ {\rm ker}\,\gamma_0=W^{1,p}_0(\Omega).$$

The trace map $\gamma_0$ is compact into $L^q(\partial\Omega)$ for all $q\in\left[1,\frac{Np-p}{N-p}\right)$ when $1<p<N$ and for all $q\geq 1$, when $p\geq N$. In the sequel, for the sake of notational simplicity we drop the use of the map $\gamma_0$. All restrictions of Sobolev functions on $\partial\Omega$ are understood in the sense of traces.

Introducing some more notation, for every $x\in\RR$, we set $x^{\pm}=\max\{\pm x,0\}$. Then for $u\in W^{1,p}(\Omega)$ we define $u^{\pm}(\cdot)=u(\cdot)^{\pm}$ and have
$$u=u^+-u^-,\ |u|=u^++u^-,\ u^+,\ u^-\in W^{1,p}(\Omega).$$

By $|\cdot|_N$ we denote the Lebesgue measure on $\RR^N$ and if $g:\Omega\times\RR\rightarrow\RR$ is a measurable function (for example, a Carath\'eodory function), then we define the Nemytskii map corresponding to $g$
$$N_g(u)(\cdot)=g(\cdot,u(\cdot))\ \mbox{for all}\ u\in W^{1,p}(\Omega).$$

Let $A:W^{1,p}(\Omega)\rightarrow W^{1,p}(\Omega)^*$ be the nonlinear map defined by
\begin{equation}\label{eq3}
	\left\langle A(u),h\right\rangle=\int_{\Omega}(a(Du),Dh)_{\RR^N}dz\ \mbox{for all}\ u,h\in W^{1,p}(\Omega).
\end{equation}

The next proposition establishes the main properties of this map. It is a special case of Proposition 3.5 in Gasinski and Papageorgiou \cite{14}.
\begin{prop}\label{prop4}
	Assume that hypotheses $H(a)(i),(ii),(iii)$ hold and that $A:W^{1,p}(\Omega)\rightarrow W^{1,p}(\Omega)^*$ is the nonlinear map defined by (\ref{eq3}). Then $A$ is bounded (that is,  maps bounded sets to bounded sets), continuous, monotone (hence also maximal monotone) and of type $(S)_+$ (that is, if $u_n\stackrel{w}{\rightarrow}u$ in $W^{1,p}(\Omega)$ and $\limsup\limits_{n\rightarrow\infty}\left\langle A(u_n),u_n-u\right\rangle\leq 0$, then $u_n\rightarrow u$ in $W^{1,p}(\Omega)$).
\end{prop}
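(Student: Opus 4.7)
The plan is to establish the four properties in the order boundedness, continuity, monotonicity, $(S)_+$, since each is essentially harder than the previous and the last one is the substantive step.

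For boundedness, I would start from Lemma~\ref{lem2}(b), which gives $|a(y)|\leq c_4(1+|y|^{p-1})$. Applying the Cauchy--Schwarz inequality in $\RR^N$ and then Hölder's inequality on $\Omega$ with conjugate exponents $p,p'$, one bounds
$$|\langle A(u),h\rangle|\leq\int_\Omega|a(Du)||Dh|\,dz\leq c_4\bigl(|\Omega|^{1/p'}+\|Du\|_p^{p-1}\bigr)\|Dh\|_p,$$
which gives $\|A(u)\|_*\leq C(1+\|u\|^{p-1})$ on any ball. Continuity then follows from the classical Krasnoselskii-type result for Nemytskii operators: if $u_n\to u$ in $W^{1,p}(\Omega)$, then $Du_n\to Du$ in $L^p(\Omega;\RR^N)$, and since $a$ is continuous with the growth $|a(y)|\leq c_4(1+|y|^{p-1})$, the map $v\mapsto a(v)$ is continuous from $L^p(\Omega;\RR^N)$ into $L^{p'}(\Omega;\RR^N)$. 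Passing to the limit in $\langle A(u_n),h\rangle$ via Hölder gives $A(u_n)\to A(u)$ in $W^{1,p}(\Omega)^*$.

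For monotonicity, hypothesis $H(a)(iii)$ states $(\nabla a(y)\xi,\xi)_{\RR^N}\geq\frac{\vartheta(|y|)}{|y|}|\xi|^2>0$ for $y\neq 0$ and $\xi\neq 0$, so the mean-value identity $a(y)-a(y')=\int_0^1\nabla a(y'+t(y-y'))(y-y')\,dt$ yields $(a(y)-a(y'),y-y')_{\RR^N}\geq 0$, with strict inequality whenever $y\neq y'$. Integrating over $\Omega$ shows $A$ is monotone, in fact strictly monotone. Since $A$ is everywhere defined, monotone, and continuous (hence demicontinuous) on the reflexive space $W^{1,p}(\Omega)$, it is maximal monotone.

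The substantive step is $(S)_+$. Assume $u_n\stackrel{w}{\to}u$ in $W^{1,p}(\Omega)$ and $\limsup_n\langle A(u_n),u_n-u\rangle\leq 0$. Write
$$\langle A(u_n),u_n-u\rangle=\langle A(u_n)-A(u),u_n-u\rangle+\langle A(u),u_n-u\rangle;$$
the second term vanishes by weak convergence, and the first is nonnegative by monotonicity, so
$$\lim_{n\to\infty}\int_\Omega\bigl(a(Du_n)-a(Du),Du_n-Du\bigr)_{\RR^N}\,dz=0.$$
Setting $e_n(z):=(a(Du_n)-a(Du),Du_n-Du)_{\RR^N}\geq 0$, we conclude $e_n\to 0$ in $L^1(\Omega)$, hence $e_n\to 0$ in measure. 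The strict monotonicity of $a$, combined with the coercivity estimate of Lemma~\ref{lem2}(c) to rule out escape of $|Du_n|$ to $\infty$, forces $Du_n(z)\to Du(z)$ a.e., up to a subsequence (this is the well-known measurable selection / Leray-Lions argument). To upgrade this to strong $L^p$-convergence of the gradients, I would invoke the $L^p$-boundedness of $\{Du_n\}$ together with the inequality $\frac{c_1}{p-1}|y|^p\leq(a(y),y)_{\RR^N}$ from Lemma~\ref{lem2}(c) and a convexity/Vitali argument (or the Brezis--Lieb lemma applied to $|Du_n|^p$), yielding $\|Du_n\|_p\to\|Du\|_p$ and $Du_n\to Du$ a.e., hence $Du_n\to Du$ in $L^p(\Omega;\RR^N)$. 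Combined with $u_n\to u$ in $L^p(\Omega)$ (compact embedding), this gives $u_n\to u$ in $W^{1,p}(\Omega)$.

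The main obstacle is the passage from $\int_\Omega e_n\,dz\to 0$ to strong convergence of the gradients, because $a$ is a general nonhomogeneous map (not $|y|^{p-2}y$), so one cannot exploit an explicit identity such as Clarkson's inequality. The argument must use only the qualitative strict monotonicity from $H(a)(iii)$ and the two-sided power control from \eqref{eq1} and Lemma~\ref{lem2}, which is precisely why the authors cite the general framework of Gasinski--Papageorgiou \cite{14}.
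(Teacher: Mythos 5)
Your argument is correct, but note that the paper does not prove Proposition \ref{prop4} at all: it is quoted as a special case of Proposition 3.5 of Gasinski and Papageorgiou \cite{14}, so your blind proof is in effect a reconstruction of that cited result rather than of anything written in the text. Your route is the standard monotone-operator (Leray--Lions) argument, and every ingredient you use is available from Lemma \ref{lem2} and hypothesis $H(a)(iii)$: the growth bound for boundedness, the Krasnoselskii continuity of the Nemytskii map for continuity, the mean-value identity for (strict) monotonicity, and the decomposition plus pointwise argument for the $(S)_+$ property. Two spots deserve one extra line if this were written out. First, $a$ is only $C^1$ on $\RR^N\setminus\{0\}$, so when the segment $[y',y]$ crosses the origin you should remark that by $H(a)(ii)$ and (\ref{eq1}) the integrand in the mean-value formula blows up at worst like $|t-t_0|^{p-2}$, which is integrable for $p>1$ (or simply invoke Lemma \ref{lem2}(a), which already asserts strict monotonicity of $a$). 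Second, in the upgrade from a.e.\ convergence of the gradients to $L^p$ convergence, the cleanest way to obtain $\|Du_n\|_p\to\|Du\|_p$ is to observe that $e_n\to 0$ in $L^1(\Omega)$ together with $a(Du_n)\rightharpoonup a(Du)$ in $L^{p'}(\Omega;\RR^N)$ (bounded plus a.e.\ convergent) and $Du_n\rightharpoonup Du$ in $L^p(\Omega;\RR^N)$ gives $\int_\Omega(a(Du_n),Du_n)_{\RR^N}dz\to\int_\Omega(a(Du),Du)_{\RR^N}dz$; then Lemma \ref{lem2}(c) makes $\{|Du_n|^p\}_{n\geq1}$ uniformly integrable and Vitali's theorem concludes, exactly in the spirit of your ``convexity/Vitali'' remark (Brezis--Lieb alone would not suffice without this norm convergence). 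With these two clarifications your sketch is a complete, self-contained proof; what it buys over the paper's one-line citation is transparency, at the cost of redoing what the authors delegate to \cite{14}.
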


Next, consider a Carath\'eodory function $f_0:\Omega\times\RR\rightarrow\RR$ and a function $\beta_0\in C(\partial\Omega\times\RR)\cap C^{0,\alpha}_{loc}(\partial\Omega\times\RR)$ with $\alpha\in\left(0,1\right]$ such that
$$|f_0(z,x)|\leq a_0(z)(1+|x|^{r-1})\ \mbox{for almost all}\ z\in\Omega\ \mbox{and all}\ x\in\RR$$
with $a_0\in L^{\infty}(\Omega)_+,p\leq r<p^*=\left\{\begin{array}{ll}
	\frac{Np}{N-p}&\mbox{if}\ p<N\\
	+\infty&\mbox{if}\ p\geq N
\end{array}\right.$ and
$$|\beta_0(z,x)|\leq c_6(1+|x|^{q-1})\ \mbox{for all}\ (z,x)\in\partial\Omega\times\RR,$$
with $c_6>0,1<q<p$. We set
$$F_0(z,x)=\int^x_0f_0(z,s)ds,B_0(z,x)=\int^x_0\beta_0(z,s)ds\ \mbox{for all}\ (z,x)\in\partial\Omega\times\RR,$$
and consider the $C^1$-functional $\varphi_0:W^{1,p}(\Omega)\rightarrow\RR$ defined by
$$\varphi_0(u)=\int_{\Omega}G(Du)dz-\int_{\Omega}F_0(z,u)dz-\int_{\partial\Omega}B_0(z,u)d\sigma\ \mbox{for all}\ u\in W^{1,p}(\Omega).$$

From Papageorgiou and R\u adulescu \cite{30} and \cite{28} (the case of the $p$-Laplacian) we obtain the following property.
\begin{prop}\label{prop5}
	Assume that $u_0\in W^{1,p}(\Omega)$ is a local $C^1(\overline{\Omega})$-minimizer of the functional $\varphi_0$, that is, there exists $\rho_0>0$ such that
	$$\varphi_0(u_0)\leq\varphi_0(u_0+h)\ \mbox{for all}\ h\in C^1(\overline{\Omega}),\ ||h||_{C^1(\overline{\Omega})}\leq\rho_0.$$
	Then $u_0\in C^{1,\mu}(\overline{\Omega})$ with $\mu\in(0,1)$ and $u_0$ is also a local $W^{1,p}(\Omega)$-minimizer of $\varphi_0$, that is, there exists $\rho_1>0$ such that
	$$\varphi_0(u_0)\leq\varphi_0(u_0+h)\ \mbox{for all}\ h\in W^{1,p}(\Omega),\ ||h||\leq\rho_1.$$
\end{prop}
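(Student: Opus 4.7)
The plan is to follow the classical Brezis--Nirenberg scheme, adapted to the nonhomogeneous operator and the nonlinear boundary term. The argument has two parts: first establishing $C^{1,\mu}(\overline{\Omega})$-regularity of $u_0$, then upgrading the $C^1$-local minimality to $W^{1,p}$-local minimality by a contradiction-plus-regularity argument.

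For the regularity of $u_0$: since $u_0$ is a local $C^1(\overline{\Omega})$-minimizer of $\varphi_0$, it is a weak solution of
$$A(u_0)=N_{f_0}(u_0)+\gamma_0^*(N_{\beta_0}(u_0))\quad\text{in }W^{1,p}(\Omega)^*.$$
Using the subcritical growth of $f_0$ and the strictly sublinear growth of $\beta_0$, a standard Moser iteration (carried out jointly on $\Omega$ and on $\partial\Omega$, with the trace terms absorbed by $1<q<p\le r<p^*$) yields $u_0\in L^\infty(\Omega)$ with $u_0\lvert_{\partial\Omega}\in L^\infty(\partial\Omega)$. The structural hypotheses $H(a)(i),(ii),(iii)$ were designed to match Lieberman's nonlinear regularity theorem \cite{21}, which then gives $u_0\in C^{1,\mu}(\overline{\Omega})$ for some $\mu\in(0,1)$.

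For the equivalence of minimality, I would argue by contradiction. Suppose there exists $\{h_n\}_{n\ge 1}\subseteq W^{1,p}(\Omega)$ with $\|h_n\|\to 0$ and $\varphi_0(u_0+h_n)<\varphi_0(u_0)$. Set $\varepsilon_n=\|h_n\|+1/n$ and consider the constrained minimization
$$m_n=\inf\{\varphi_0(u):\|u-u_0\|\le\varepsilon_n\}.$$
Since $G$ is convex, $\varphi_0$ is sequentially weakly lower semicontinuous (the reaction and boundary contributions passing to the limit via the compact embeddings $W^{1,p}(\Omega)\hookrightarrow L^r(\Omega)$ and $\gamma_0$ into $L^q(\partial\Omega)$), and the closed ball is weakly compact, so a minimizer $v_n$ exists and satisfies $\varphi_0(v_n)\le\varphi_0(u_0+h_n)<\varphi_0(u_0)$ together with $v_n\to u_0$ in $W^{1,p}(\Omega)$.

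Next I would derive a uniform Euler--Lagrange equation: either $v_n$ lies in the open ball, in which case $\varphi_0'(v_n)=0$, or $\|v_n-u_0\|=\varepsilon_n$, in which case the Lagrange multiplier rule gives $\mu_n\ge 0$ with
$$A(v_n)+\mu_n J_n(v_n-u_0)=N_{f_0}(v_n)+\gamma_0^*(N_{\beta_0}(v_n)),$$
where $J_n$ is the Fr\'echet derivative of $\tfrac{1}{p}\|\cdot\|^p$ at $v_n-u_0$, a bounded lower-order perturbation of $A$. Repeating the Moser iteration on this perturbed equation (using $v_n\to u_0$ in $W^{1,p}$ to get a uniform $L^{r}(\Omega)$-bound on the right-hand side and a uniform $L^{q}(\partial\Omega)$-bound on the boundary term), I obtain $\sup_n\|v_n\|_\infty<\infty$, and Lieberman's theorem then delivers $\sup_n\|v_n\|_{C^{1,\mu}(\overline{\Omega})}<\infty$ for some $\mu\in(0,1)$ independent of $n$. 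By Arzel\`a--Ascoli and the compact embedding $C^{1,\mu}(\overline{\Omega})\hookrightarrow C^1(\overline{\Omega})$, up to a subsequence $v_n\to u_0$ in $C^1(\overline{\Omega})$, contradicting the local $C^1$-minimality of $u_0$ since $\varphi_0(v_n)<\varphi_0(u_0)$.

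The main obstacle is the uniform $C^{1,\mu}$-bound on the constrained minimizers $v_n$ when the Lagrange multiplier case occurs: one must verify that the duality perturbation $\mu_n J_n(v_n-u_0)$ does not spoil the structural conditions under which Lieberman's regularity applies, and that the boundary term, which enters the natural boundary condition $\partial u/\partial n_a=\lambda\beta_0(z,u)+(\text{perturbation})$, still satisfies the growth bounds required by the boundary regularity theory. This is precisely where the restrictions $p\le r<p^*$, $1<q<p$ and hypotheses $H(a)$ intervene, and it is handled in the cited works \cite{30,28}.
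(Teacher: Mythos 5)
The paper itself offers no proof of Proposition \ref{prop5}: it quotes the result from Papageorgiou and R\u{a}dulescu \cite{30} (and \cite{28} for the $p$-Laplacian), so the relevant comparison is with the proofs in those references, and your outline is exactly their Brezis--Nirenberg scheme: $\varphi_0'(u_0)=0$ by density of $C^1(\overline{\Omega})$ in $W^{1,p}(\Omega)$, then an $L^\infty$-bound plus Lieberman's regularity for $u_0$, then the contradiction argument with constrained minimizers $v_n$ on small $W^{1,p}$-balls, a Lagrange multiplier, uniform $C^{1,\mu}$-estimates, and convergence $v_n\rightarrow u_0$ in $C^1(\overline{\Omega})$ contradicting the $C^1$-local minimality. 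The first part and the overall architecture are correct.

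The genuine gap is the step you flag and then defer to \cite{30,28}: the uniform estimates for $v_n$ in the multiplier case. If $\mu_n\rightarrow\infty$, the perturbed map $y\mapsto a(y)+\mu_n|y-Du_0(z)|^{p-2}(y-Du_0(z))$ does not satisfy growth and ellipticity conditions with constants independent of $n$, and this undermines not only the Lieberman step but already your ``repeat the Moser iteration'' claim for $\sup_n\|v_n\|_\infty<\infty$ (the multiplier term has no useful sign when tested against truncations of $v_n$, and its size is not controlled a priori). The standard way to close this, and what the cited works do in substance, is to normalize: divide the Euler--Lagrange identity (hence both the interior equation and the natural boundary condition) by $1+\mu_n$ and note that the convex combination $\frac{1}{1+\mu_n}a(y)+\frac{\mu_n}{1+\mu_n}|y-Du_0(z)|^{p-2}(y-Du_0(z))$ satisfies $p$-growth, coercivity and Lieberman-type structure conditions with constants independent of $n$ (the weights sum to one, and the $z$-dependence enters only through $Du_0\in C^{0,\mu}(\overline{\Omega})$, which is admissible), while the normalized reaction and boundary terms keep their subcritical bounds; alternatively one distinguishes the cases $\{\mu_n\}$ bounded and unbounded. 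With such a normalization the uniform $L^\infty$- and $C^{1,\mu}$-bounds go through and the contradiction is reached; as written, your proposal leaves precisely this crucial estimate unproved rather than merely technical.
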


Next, let us recall some basic definitions and facts from Morse theory (critical groups) which we will need later.

Given a Banach space $X$, a function $\varphi\in C^1(X,\RR)$ and $c\in\RR$, we introduce the following sets:
\begin{eqnarray*}
	&&\varphi^c=\{u\in X:\varphi(u)\leq c\}\ (\mbox{the sublevel set of}\ \varphi\ \mbox{at the level}\ c),\\
	&&K_{\varphi}=\{u\in X:\varphi'(u)= c\}\ (\mbox{the critical set of}\ \varphi),\\
	&&K_{\varphi}^c=\{u\in K_{\varphi}:\varphi(u)=c\}\ (\mbox{the critical set of}\ \varphi\ \mbox{at the level}\ c).
\end{eqnarray*}

Let $(Y_1,Y_2)$ be a topological pair such that $Y_2\subseteq Y_1\subseteq X$. By $H_k(Y_1,Y_2),\ k\in\NN_0$, we denote the $k$th relative singular homology group for the topological pair $(Y_1,Y_2)$ with integer coefficients. The critical groups of $\varphi$ at an isolated point $u\in K^c_{\varphi}$ are defined by
$$C_k(\varphi,u)=H_k(\varphi^c\cap U,\varphi^c\cap U\backslash\{0\})\ \mbox{for all}\ k\in\NN_0.$$
Here, $U$ is a neighborhood of $u$ such that $K_{\varphi}\cap\varphi^c\cap U=\{u\}$. The excision property of singular homology implies that the above definition of critical groups is independent of the choice of the neighborhood $U$ of $u$.

Suppose that $\varphi$ satisfies the $C$-condition and that $\inf\varphi(K_{\varphi})>-\infty$. Let $c<\inf \varphi(K_{\varphi})$. The critical groups of $\varphi$ at infinity are defined by
$$C_k(\varphi,\infty)=H_k(X,\varphi^c)\ \mbox{for all}\ k\in\NN_0.$$

The second deformation theorem (see, for example, Gasinski and Papageorgiou \cite[p. 628]{13}) implies that this definition is independent of the level $c<\inf\varphi(K_{\varphi})$.

Suppose that $\varphi\in C^1(X,\RR)$ satisfies the $C$-condition and that $K_{\varphi}$ is finite. We define
\begin{eqnarray*}
	&&M(t,u)=\sum\limits_{k\in\NN_0}{\rm rank}\, C_k(\varphi,u)t^k\ \mbox{for all}\ t\in\RR\ \mbox{and all}\ u\in K_{\varphi},\\
	&&P(t,\infty)=\sum\limits_{k\in\NN_0}{\rm rank}\, C_k(\varphi,\infty)t^{k}\ \mbox{for all}\ t\in\RR.
\end{eqnarray*}

Then the Morse relation says that
\begin{equation}\label{eq4}
	\sum\limits_{u\in K_{\varphi}}M(t,u)=P(t,\infty)+(1+t)Q(t)\ \mbox{for all}\ t\in\RR,
\end{equation}
with $Q(t)=\sum\limits_{k\in\NN_0}\beta_kt^k$ being  a formal series in $t\in\RR$ with nonnegative integer coefficients $\beta_k$.

Next, we state a strong comparison principle. Our proof uses ideas from Guedda and V\'eron \cite{17}, who were the first to prove a strong comparison principle for the Dirichlet $p$-Laplacian. Recall that $n(\cdot)$ denotes the outward unit normal on $\partial\Omega$.
\begin{prop}\label{prop6}
	Assume that hypotheses $H(a)(i),(ii),(iii)$ hold, $u_1,u_2\in C^1(\overline{\Omega}),$ $g_1,g_2\in L^{\infty}(\Omega)$, $u_1(z)\leq u_2(z)$ for all $z\in\overline{\Omega}$, and
	\begin{eqnarray*}
		&&g_1(z)\leq g_2(z)\ \mbox{for almost all}\ z\in\Omega,\ g_1\not\equiv g_2,\\
		&&-{\rm div}\, a(Du_1(z))=g_1(z)\ \mbox{for almost all}\ z\in\Omega,\left.\frac{\partial u_1}{\partial n}\right|_{\partial\Omega}>0\ \mbox{or}\ \left.\frac{\partial u_1}{\partial n}\right|_{\partial\Omega}<0,\\
		&&-{\rm div}\, a(Du_2(z))=g_2(z)\ \mbox{for almost all}\ z\in\Omega,\left.\frac{\partial u_2}{\partial n}\right|_{\partial\Omega}>0\ \mbox{or}\ \left.\frac{\partial u_2}{\partial n}\right|_{\partial\Omega}<0.
	\end{eqnarray*}
	Then $(u_2-u_1)(z)>0$ for all $z\in\Omega$ and $\frac{\partial(u_2-u_1)}{\partial n}(z_0)<0$ for all $z_0\in\Sigma_0=\{z\in\partial\Omega:u_2(z)=u_1(z)\}$.
\end{prop}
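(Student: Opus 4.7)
The strategy is to linearize the difference of the two equations and apply classical strong maximum principle and Hopf boundary-point tools on the non-degenerate portion of $\Omega$. First I would set $v := u_2 - u_1 \ge 0$ on $\overline\Omega$, subtract the two PDEs, and use the fundamental theorem of calculus along the segment joining $Du_1(z)$ to $Du_2(z)$ to rewrite the resulting identity in the divergence form
$$-\mathrm{div}(A(z)Dv) = h := g_2 - g_1 \text{ in } \Omega, \qquad A(z) := \int_0^1 \nabla a(Du_1(z) + tDv(z))\, dt,$$
valid in the weak sense, with $h \ge 0$ and $h \not\equiv 0$. Hypotheses $H(a)(ii)$ and $(iii)$ force $A(z)$ to be symmetric and to satisfy the two-sided pointwise bound
$$\mu(z)|\eta|^2 \le (A(z)\eta,\eta) \le c_3\, \mu(z)|\eta|^2 \text{ for all } \eta \in \RR^N,$$
where $\mu(z)$ is the integral average of $\vartheta(|\xi|)/|\xi|$ along the connecting segment. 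Consequently, on any open set where $|Du_1| + |Du_2|$ admits a positive lower bound, the linearization is uniformly elliptic with bounded continuous coefficients.

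The boundary hypothesis that each $\partial u_i/\partial n$ has constant nonzero sign on $\partial\Omega$ provides, via continuity of $Du_i$ on $\overline\Omega$, a tubular neighborhood $V$ of $\partial\Omega$ with $|Du_1|, |Du_2| \ge \delta_0 > 0$. On $V$ the linearized operator is therefore uniformly elliptic, so I would apply the classical Hopf boundary-point lemma (Gilbarg-Trudinger) at each $z_0 \in \Sigma_0$ to conclude $\partial(u_2 - u_1)/\partial n(z_0) < 0$, provided $v > 0$ in $V \cap \Omega$.

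It remains to show $v > 0$ throughout $\Omega$, which I would do by contradiction. Suppose $F := \{v = 0\}\cap\Omega$ is nonempty. At any $z_1 \in F$ with $|Du_1(z_1)| + |Du_2(z_1)| > 0$, continuity provides a ball on which $A$ is uniformly elliptic and $v \ge 0$ solves a linear equation with nonnegative right-hand side, so the linear strong maximum principle forces $v \equiv 0$ in that ball. A connectedness argument then propagates $v \equiv 0$ to every connected component of $\Omega \setminus Z$ meeting $F$, where $Z := \{Du_1 = Du_2 = 0\}$. If this propagation reached every component of $\Omega$, substituting $v \equiv 0$ into the linearized equation would give $h \equiv 0$, contradicting $g_1 \not\equiv g_2$.

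The main obstacle is propagating positivity of $v$ across the degenerate set $Z$, where $A$ may vanish and the classical linear theory is unavailable. Following the strategy of Guedda-V\'eron, I would construct, near each obstructing point $z^* \in F \cap Z$, a local radial barrier $\psi \ge 0$ that vanishes on a small sphere, is strictly positive inside it, and satisfies $-\mathrm{div}(AD\psi) \le 0$ pointwise. Hypothesis $H(a)(i)$ — specifically the control $\lim_{t\to 0^+} a_0'(t)t/a_0(t) > -1$ together with the monotonicity of $t \mapsto a_0(t)t$ — supplies the precise asymptotics of $a$ at the origin needed to make such $\psi$ admissible even when the operator degenerates at $y = 0$. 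Weak comparison of $v$ with a suitable positive multiple of $\psi$ then forces $v > 0$ in an interior ball reaching across $Z$, yielding the required contradiction. Once $v > 0$ in $\Omega$ is established, the Hopf argument of the preceding paragraph delivers the boundary inequality, completing the proof.
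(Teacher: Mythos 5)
Your linearization step (the matrix $A(z)=\int_0^1\nabla a(Du_1+tDv)\,dt$, the bounds from $H(a)(ii),(iii)$, and the uniform ellipticity on a boundary collar where the normal derivatives force $|Du_i|\geq\delta_0$) coincides with the paper's setup, and the Hopf argument at $\Sigma_0$ is fine once interior positivity is known. The genuine gap is exactly the step you flag as ``the main obstacle'': crossing the degenerate set $Z=\{Du_1=Du_2=0\}$. Your contradiction only closes in the branch where the propagation of $v\equiv 0$ reaches every component of $\Omega\setminus Z$; in the remaining branch everything rests on the asserted radial barrier $\psi$ with $-{\rm div}(A\,D\psi)\leq 0$, and this is not constructed. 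It is also not clear it can be constructed from $H(a)(i)$ alone: $A(z)$ depends on the unknown gradients, its lower ellipticity bound $\mu(z)$ vanishes on $Z$ when $p>2$ (and the upper bound blows up when $p<2$), and the rate of degeneration is governed by how $u_1,u_2$ behave near their common critical points, which you do not control. Strong comparison across critical sets is precisely the delicate point for such operators, so invoking ``a Guedda--V\'eron type barrier'' without carrying it out leaves the proof incomplete.

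The paper avoids this difficulty by a different organization of the interior step. It first shows that $v=u_2-u_1$ cannot vanish identically on the collar $\Omega_\delta$: if it did, then $g_1=g_2$ a.e.\ on $\Omega_\delta$, and testing the two equations with a function $\vartheta\in C^1(\overline\Omega)$, $\vartheta>0$ in $\Omega$, $\vartheta|_{\partial\Omega}=0$, $\vartheta\equiv 1$ on $\Omega\setminus\Omega_\delta$, gives $\int_\Omega g_1\vartheta\,dz=\int_\Omega g_2\vartheta\,dz$, contradicting $g_1\leq g_2$, $g_1\not\equiv g_2$. The classical strong maximum principle and Hopf lemma, applied only on the collar where $L$ is strictly elliptic, then give $v>0$ in $\Omega_\delta$ and the boundary inequality on $\Sigma_0$; consequently the coincidence set $S=\{u_1=u_2\}\cap\Omega$ is compact, and the interior conclusion $v>0$ in all of $\Omega$ is obtained by citing the tangency-type result of Motreanu--Motreanu--Papageorgiou (Corollary 8.23), which is exactly the tool that handles the degenerate region you tried to treat by hand. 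If you want to keep your structure, you should either prove the barrier lemma in detail (with explicit control of the degeneracy of $A$ near $Z$) or replace that step by the compact-coincidence-set argument and an appeal to such a tangency principle.
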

\begin{proof}
	By hypothesis we have
	\begin{equation}\label{eq5}
		-{\rm div}\, (a(Du_2(z))-a(Du_1(z)))=g_2(z)-g_1(z)\geq 0\ \mbox{for almost all}\ z\in\Omega\,.
	\end{equation}
	
	Let $a=(a_k)^N_{k=1}$ with $a_k:\RR^N\rightarrow\RR$ for every $k\in\{1,\ldots,N\}$. Using the mean value theorem, we have
	\begin{equation}\label{eq6}
		a_k(y)-a_k(y')=\overset{N}{\underset{\mathrm{i=1}}\sum}\int^1_0\frac{\partial a_k}{\partial y_i}(y'+t(y-y'))(y_i-y'_i)dt
	\end{equation}
	for all $y=(y_i)^N_{i=1},y'=(y'_i)^N_{i=1}\in\RR^N$ and all $k\in\{1,\ldots,N\}$.
	
	We introduce the following coefficient functions
	\begin{equation}\label{eq7}
		c_{k,i}(z)=\int^1_0\frac{\partial a_k}{\partial y_i}(Du_1(z)+t(Du_2(z)-Du_1(z))).
	\end{equation}
	
	Using these coefficients, we introduce the following linear differential operator
	\begin{equation}\label{eq8}
		L(v)=-{\rm div}\, \left(\overset{N}{\underset{\mathrm{i=1}}\sum}c_{k,i}(z)\frac{\partial v}{\partial z_i}\right)=-\overset{N}{\underset{\mathrm{k,i=1}}\sum}\frac{\partial}{\partial z_k}\left(c_{k,i}(z)\frac{\partial v}{\partial z_i}\right).
	\end{equation}
	
	Let $v=u_2-u_1$. Then $v\neq 0$ (recall that $g_1\not\equiv g_2$) and from (\ref{eq5})--(\ref{eq8}) we have
	\begin{equation}\label{eq9}
		L(v)(z)=g_2(z)-g_1(z)\geq 0\ \mbox{for almost all}\ z\in\Omega.
	\end{equation}
	
	By hypothesis, we have $\left.\frac{\partial u_1}{\partial n}\right|_{\partial\Omega},\left.\frac{\partial u_2}{\partial n}\right|_{\partial\Omega}>0$ or $<0$. So, for small $\delta>0$ we have
	\begin{equation}\label{eq10}
		|D((1-t)u_1(z)+tu_2(z))|\geq\eta>0\ \mbox{for all}\ z\in\overline{\Omega}_{\delta},
	\end{equation}
	with $\Omega_{\delta}=\{z\in\Omega:d(z,\partial\Omega)<\delta\}$. It follows from (\ref{eq8}) and (\ref{eq10})  that the operator $L$ is strictly elliptic on $\Omega_{\delta}$.
	
	Suppose that $u_1|_{\Omega_{\delta}}=u_2|_{\Omega_{\delta}}$. Then $g_1(z)=g_2(z)$ for almost all $z\in\Omega_{\delta}$. We consider a function $\vartheta\in C^1(\overline{\Omega})$ such that
	\begin{equation}\label{eq11}
		\vartheta(z)>0\ \mbox{for all}\ z\in\Omega,\ \vartheta|_{\partial\Omega}=0,\ \vartheta|_{\Omega\backslash\Omega_{\delta}}\equiv 1.
	\end{equation}
	
	We have
	\begin{eqnarray*}
		\int_{\Omega}g_1\vartheta dz=\left\langle A(u_1),\vartheta\right\rangle&=&\int_{\Omega_{\delta}}(a(Du_1),D\vartheta)_{\RR^N}dz\ (\mbox{see (\ref{eq11})})\\
		&=&\int_{\Omega_{\delta}}(a(Du_2),D\vartheta)_{\RR^N}dz\ (\mbox{recall that}\ u_1|_{\Omega_{\delta}}=u_2|_{\Omega_{\delta}})\\
		&=&\int_{\Omega}g_2\vartheta dz\ (\mbox{see (\ref{eq11})}),
	\end{eqnarray*}
	which is in contradiction with the hypothesis that $g_1\not\equiv g_2$ (recall $\vartheta>0$, see (\ref{eq11})). So, we have $u_2-u_1\in C_+\backslash\{0\}$.
	
	Then from (\ref{eq9}) and the strong maximum principle (see, for example, Gasinski and Papageorgiou \cite[p. 738]{13}), we derive
	\begin{equation}\label{eq12}
		(u_2-u_1)(z)>0\ \mbox{for all}\ z\in\Omega_{\delta}\ \mbox{and}\ \left.\frac{\partial(u_2-u_1)}{\partial_n}\right|_{\Sigma_0}<0.
	\end{equation}
	
It follows	from (\ref{eq12}) that the set $S=\{z\in\Omega:u_1(z)=u_2(z)\}$ is compact. Hence Corollary 8.23, p. 215, of Motreanu, Motreanu and Papageorgiou \cite{23}, implies that
	$$(u_2-u_1)(z)>0\ \mbox{for all}\ z\in\Omega\ \mbox{and}\ \left.\frac{\partial(u_2-u_1)}{\partial n}\right|_{\Sigma_0}<0.$$
This completes the proof.
\end{proof}

\begin{remark}
	Consider the following order cone in $C^1(\overline{\Omega})$:
	$$\hat{C}_+=\{y\in C^1(\overline{\Omega}):y(z)\geq 0\ \mbox{for all}\ z\in\overline{\Omega},\left.\ \frac{\partial y}{\partial n}\right|_{\Sigma_0}\leq 0\}\,,$$
	where $\Sigma_0=\{z\in\partial\Omega:y(z)=0\}$. This cone has a nonempty interior given by
	$${\rm int}\,\hat{C}_+=\{y\in\hat{C}_+:y(z)>0\ \mbox{for all}\ z\in\Omega,\left.\ \frac{\partial y}{\partial n}\right|_{\Sigma_0}<0\}.$$
	Then Proposition \ref{prop6} says that $u_2-u_1\in int\hat{C}_+$.
\end{remark}

We will also use the next proposition, which essentially produces an equivalent norm for the Sobolev space $W^{1,p}(\Omega)$. The result is stated in a more general form than the one we will need, because we believe that in this form it is of independent interest and can be used in other circumstances.
\begin{prop}\label{prop7}
	Assume that $\beta\in L^{\infty}(\partial\Omega),\beta(z)\geq 0$ for almost all $z\in\partial\Omega,\beta\not\equiv 0$, $1\leq q\leq\frac{Np-p}{N-p}$ if $p<N$, and $1\leq p$ if $N\leq p$, and $|u|=||Du||_p+(\int_{\partial\Omega}\beta(z)|u|^qd\sigma)^{1/q}$ for all $u\in W^{1,p}(\Omega)$. Then we can find $0<c_7\leq c_8$ such that $c_7|u|\leq||u||\leq c_8|u|$ for all $u\in W^{1,p}(\Omega)$.
\end{prop}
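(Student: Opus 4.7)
The plan is to establish the two inequalities separately. The upper bound $|u|\leq c_8\|u\|$ is the easy direction: from $\|Du\|_p\leq\|u\|$ and the continuity of the trace map $\gamma_0:W^{1,p}(\Omega)\to L^q(\partial\Omega)$ (the exponent $q$ lies in the admissible range), together with $\beta\in L^\infty(\partial\Omega)$, one gets
$$\Bigl(\int_{\partial\Omega}\beta(z)|u|^q d\sigma\Bigr)^{1/q}\leq\|\beta\|_\infty^{1/q}\,\|u\|_{L^q(\partial\Omega)}\leq c\,\|u\|,$$
and adding the gradient term produces $c_8$.

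The non-trivial direction is the lower bound $c_7|u|\leq\|u\|$, equivalently $\|u\|\leq c_7^{-1}|u|$. I would argue by contradiction using the standard compactness machinery. Suppose no such $c_7>0$ exists. Then there is a sequence $\{u_n\}\subset W^{1,p}(\Omega)$ with $\|u_n\|=1$ for all $n$ and $|u_n|\to 0$, which forces both $\|Du_n\|_p\to 0$ and $\int_{\partial\Omega}\beta(z)|u_n|^q d\sigma\to 0$. Since $W^{1,p}(\Omega)$ is reflexive, along a subsequence $u_n\stackrel{w}{\to}u$ in $W^{1,p}(\Omega)$; by weak lower semicontinuity of the $L^p$-norm, $\|Du\|_p\leq\liminf\|Du_n\|_p=0$, so $Du=0$ a.e., meaning $u$ is a.e. equal to a constant $c$ on each connected component of $\Omega$ (hence, since $\Omega$ is a bounded domain, on all of $\Omega$).

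Next I would exploit the compactness of the trace embedding into $L^q(\partial\Omega)$: $u_n\to u=c$ in $L^q(\partial\Omega)$, and since $\beta\in L^\infty(\partial\Omega)$, this yields
$$\int_{\partial\Omega}\beta(z)|c|^q d\sigma=\lim_{n\to\infty}\int_{\partial\Omega}\beta(z)|u_n|^q d\sigma=0.$$
Because $\beta\geq 0$ and $\beta\not\equiv 0$, we have $\int_{\partial\Omega}\beta\,d\sigma>0$, which forces $c=0$. Thus $u\equiv 0$. Finally, by the Rellich--Kondrachov compact embedding $W^{1,p}(\Omega)\hookrightarrow L^p(\Omega)$ we get $u_n\to 0$ in $L^p(\Omega)$, and together with $\|Du_n\|_p\to 0$ this implies $\|u_n\|\to 0$, contradicting $\|u_n\|=1$.

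The main obstacle is the identification of the weak limit as zero: one really needs the two ingredients $\beta\not\equiv 0$ and the compactness of $\gamma_0$ into $L^q(\partial\Omega)$ working together to rule out nontrivial constant weak limits. Once that is secured, the rest is a routine application of the compactness of $W^{1,p}(\Omega)\hookrightarrow L^p(\Omega)$ to convert convergence of the gradients into convergence of the full norm.
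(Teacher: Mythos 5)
Your overall strategy is the same as the paper's: continuity of the trace map gives $|u|\leq c\,\|u\|$, and the reverse inequality is obtained by a normalization--compactness--contradiction argument in which the weak limit is shown to be a constant (from $\|Du_n\|_p\to 0$) and then forced to vanish by the boundary term, with Rellich--Kondrachov finishing the contradiction. Two remarks. First, a cosmetic slip: $c_7|u|\leq\|u\|$ is not ``equivalently $\|u\|\leq c_7^{-1}|u|$''; these are the two \emph{different} inequalities of the proposition. Your two steps do in fact prove both of them ($|u|\lesssim\|u\|$ in the first step, $\|u\|\lesssim|u|$ in the second), so this is only a mislabeling, not a loss of content.

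Second, there is a genuine (though easily repaired) gap: the hypotheses allow $q$ up to and \emph{including} the critical trace exponent $\frac{Np-p}{N-p}$ when $p<N$, and at that endpoint the trace map $\gamma_0:W^{1,p}(\Omega)\rightarrow L^{q}(\partial\Omega)$ is continuous but \emph{not} compact (the paper itself records compactness only for $q\in\left[1,\frac{Np-p}{N-p}\right)$). Your identification step ``$u_n\to c$ in $L^q(\partial\Omega)$, hence $\int_{\partial\Omega}\beta|c|^q d\sigma=\lim_n\int_{\partial\Omega}\beta|u_n|^q d\sigma=0$'' therefore fails in the critical case. The paper avoids this by using only the weak convergence $u_n\stackrel{w}{\rightarrow}u$ in $L^q(\partial\Omega)$ (which follows from the continuity, hence weak continuity, of $\gamma_0$) together with sequential weak lower semicontinuity: the map $v\mapsto\|Dv\|_p+\left(\int_{\partial\Omega}\beta(z)|v|^q d\sigma\right)^{1/q}$ is convex and continuous, so $\|Du\|_p+\left(\int_{\partial\Omega}\beta(z)|u|^q d\sigma\right)^{1/q}\leq\liminf_n|u_n|=0$, which yields both $Du=0$ and $\int_{\partial\Omega}\beta|u|^q d\sigma=0$ without any compactness of the trace. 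If you replace your strong-convergence step by this lower-semicontinuity argument (or restrict to subcritical $q$, where your argument is fine as written), the proof is complete and coincides with the paper's.
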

\begin{proof}
	Note that
	\begin{eqnarray}\label{eq13}
		&|u|&\leq||Du||_p+||\beta||^{1/q}_{L^{\infty}(\partial\Omega)}||u||_{L^q(\partial\Omega)}\nonumber\\
		&&\leq||Du||_p+||\beta||_{L^{\infty}(\partial\Omega)}||\gamma_0||_{\mathcal{L}}||u||\nonumber\\
		&&\leq c_9||u||\ \mbox{for some}\ c_9>0.
	\end{eqnarray}
	
	Next we show that we find $c_{10}>0$ such that
	\begin{equation}\label{eq14}
		||u||_p\leq c_{10}|u|\ \mbox{for all}\ u\in W^{1,p}(\Omega).
	\end{equation}
	
	Suppose that (\ref{eq14}) is not true. Then we can find $\{u_n\}_{n\geq 1}\subseteq W^{1,p}(\Omega)$ such that
	$$||u_n||_p>n|u_n|\ \mbox{for all}\ n\in\NN.$$
	
	Normalizing in $L^p(\Omega)$ if necessary, we may assume that $||u_n||_p=1$ for all $n\in\NN$. Then
	\begin{eqnarray}\label{eq15}
		&&|u_n|<\frac{1}{n}\ \mbox{for all}\ n\in\NN,\nonumber\\
		&\Rightarrow&|u_n|\rightarrow 0\ \mbox{as}\ n\rightarrow\infty,\\
		&\Rightarrow&||Du_n||_p\rightarrow 0\ \mbox{as}\ n\rightarrow\infty,\nonumber\\
		&\Rightarrow&\{u_n\}_{n\geq 1}\subseteq W^{1,p}(\Omega)\ \mbox{is bounded}\ (\mbox{recall that}\ ||u_n||_p=1\ \mbox{for all}\ n\in\NN).\nonumber
	\end{eqnarray}
	
	Then by passing to a subsequence if necessary, we may assume that
	\begin{equation}\label{eq16}
		u_n\stackrel{w}{\rightarrow}u\ \mbox{in}\ W^{1,p}(\Omega),\ u_n\rightarrow u\ \mbox{in}\ L^p(\Omega)\ \mbox{and}\ u_n\stackrel{w}{\rightarrow}u\ \mbox{in}\ L^q(\partial\Omega)
	\end{equation}
	(here we use the continuity of the trace map). It follows from (\ref{eq15}), (\ref{eq16}) that
	\begin{eqnarray}\label{eq17}
		&&||Du||_p+\left(\int_{\partial\Omega}\beta(z)|u|^qd\sigma\right)^{1/q}\leq 0\ (\mbox{recall that}\ \beta\in L^{\infty}(\Omega))\\
		&\Rightarrow&u\equiv\xi\in\RR\,.\nonumber
	\end{eqnarray}
	
	If $\xi\neq 0$, then by virtue of (\ref{eq17}) we have
	$$0<|\xi|^q\int_{\partial\Omega}\beta(z)d\sigma\leq 0,$$
	 a contradiction. Hence $\xi=0$ and so from (\ref{eq16}) we have
	$$u_0\rightarrow 0\ \mbox{in}\ L^p(\Omega),$$
	which is a contradiction with the fact that $||u_n||_p=1$ for all $n\in\NN$. So, (\ref{eq14}) holds and this, combined with (\ref{eq13}), implies that the assertion of the proposition is true.
\end{proof}
\begin{remark}
	If $\beta\equiv 1$, then Proposition \ref{prop7} asserts that
	$$u\mapsto |u|=||Du||_p+||u||_{L^q(\partial\Omega)}$$
	with $q\in\left[1,\frac{Np-p}{N-p}\right]$ if $p<N$, and $q\geq 1$ if $N<p$, is an equivalent norm on the Sobolev space $W^{1,p}(\Omega)$ (see also Gasinski and Papageorgiou \cite{13}, Proposition 2.5.8, p. 218).
\end{remark}

Finally we present all the conditions on the other data of \eqref{eqP} (that is, for $f(z,x)$ and $\beta(z,x)$) which we will use to prove our results and then we have the statements of our main results.

We start with the following hypotheses on the reaction term $f(z,x)$.

\smallskip
$H(f):$ $f:\Omega\times\RR\rightarrow\RR$ is a Carath\'eordory function such that $f(z,0)=0$ for almost all $z\in\Omega$ and
\begin{itemize}
	\item[(i)] $|f(z,x)|\leq a(z)(1+|x|^{r-1})$ for almost all $z\in\Omega$ and all $x\in\RR$, with $a\in L^{\infty}(\Omega)_+$, $p<r<p^*$;
	\item[(ii)] if $F(z,x)=\int^x_0f(z,s)ds$, then there exist $\eta>p$ and $M>0$ such that
	\begin{center}
		$0<\eta F(z,x)\leq f(z,x)x$ for almost all $z\in\Omega$ and all $|x|\geq M$;\\
		$f(z,x)x\leq c^*_1|x|^r-c^*_2|x|^p$ for almost all $z\in\Omega$, all $|x|\geq M$, and some $c^*_1,c^*_2>0$;
	\end{center}
	\item[(iii)] $\lim\limits_{x\rightarrow 0}\frac{f(z,x)}{|x|^{p-2}x}=0$ uniformly for almost all $z\in\Omega$.
\end{itemize}
\begin{remark}
	Hypothesis $H(f)(ii)$ is the well-known Ambrosetti-Rabinowitz condition and it implies that
	\begin{eqnarray}\label{eq18}
		c_{11}|x|^{\eta}\leq F(z,x)\ \mbox{for almost all}\ z\in\Omega,\ \mbox{all}\ |x|\geq M\ \mbox{and some}\ c_{11}>0.
	\end{eqnarray}
From (\ref{eq18}) and hypothesis $H(f)(ii)$, we infer that for almost all $z\in\Omega$, $f(z,\cdot)$ is $(p-1)$-superlinear. It would be interesting to know if one can replace the Ambrosetti-Rabinowitz condition by more general superlinearity conditions, like the ones used in Papageorgiou and R\u adulescu \cite{29, 30}. Below we give simple examples of functions which satisfy hypotheses $H(f)$ (for the sake of simplicity, we drop the $z$-dependence):
	\begin{eqnarray*}
		&&f(x)=|x|^{r-2}x\ \mbox{for all}\ x\in\RR,\ \mbox{with}\ p<r<p^*,\\
		&&f(x)=\left\{\begin{array}{ll}
			|x|^{\eta-2}x&\mbox{if}\ |x|\leq 1\\
			2|x|^{r-2}x-|x|^{p-2}x&\mbox{if}\ 1<|x|
		\end{array}\right.\ \mbox{with}\ 1<p<\eta,\, r.
	\end{eqnarray*}
\end{remark}

One of our main results is that for all small $\lambda>0$, problem \eqref{eqP} admits extremal constant sign solutions, that is, there is a smallest positive solution $u_\lambda^*\in D_+$ and a biggest negative solution $v_\lambda^*\in -D_+$. These solutions are crucial in our proof on the existence of nodal (that is, sign changing) solutions (Section 4). To study the maps $\lambda\mapsto u_\lambda^*$ and $\lambda\mapsto v_\lambda^*$ and to prove the existence of nodal solutions, we will need to strengthen hypotheses $H(f)$ as follows.

\smallskip
$H(f)':$ $f:\Omega\times\RR\rightarrow\RR$ is a Carath\'eodory function such that $f(z,0)=0$ for almost all $z\in\Omega$, hypotheses $H(f)'(i),(ii),(iii)$ are the same as the corresponding hypotheses $H(f)(i),(ii),(iii)$, and \\
$(iv)$ for almost all $z\in\Omega,\ f(z,\cdot)$ is strictly increasing.
\begin{remark}
	The reason we impose this extra condition on $f(z,\cdot)$ is to be able to use the strong comparison principle in Proposition \ref{prop6}. The fact that the parameter $\lambda>0$ appears in the boundary and not in the reaction term, leads to stronger conditions on $f(z,\cdot)$.
\end{remark}

Finally in Section 5, where we deal with the semilinear problem (that is, $a(y)=y$ for all $y\in\RR^N$), in order to make use of tools from Morse theory (critical groups), we will need to introduce differentiability conditions on $f(z,\cdot)$. More precisely, the new hypotheses on $f(z,x)$ are:

\smallskip
$H(f)'':$ $f:\Omega\times\RR\rightarrow\RR$ is a measurable function such that for almost all $z\in\Omega$, $f(z,0)=0,f(z,\cdot)\in C^1(\RR)$ and
\begin{itemize}
	\item[(i)] $|f'_x(z,x)|\leq a(z)(1+|x|^{r-2})$ for almost all $z\in\Omega$ and all $x\in\RR$, with $a\in L^{\infty}(\Omega)$, $2<r<2^*$;
	\item[(ii)] if $F(z,x)=\int^x_0f(z,s)ds$, then there exist $\eta>2$ and $M>0$ such that
	$$0<\eta F(z,x)\leq f(z,x)x\ \mbox{and}\ f(z,x)x\leq c^*_1|x|^r-c^*_2|x|^2\ \mbox{for almost all}\ z\in\Omega\ \mbox{and all}\ |x|\geq M;$$
	\item[(iii)] $f'_x(z,0)=\lim\limits_{x\rightarrow 0}\frac{f(z,x)}{x}=0$ uniformly for almost all $z\in\Omega$;
	\item[(iv)] for every $\rho>0$, there exists $\hat{\xi}_{\rho}>0$ such that for almost all $z\in\Omega$ the function
	$$x\mapsto f(z,x)+\hat{\xi}_{\rho}x$$
	is nondecreasing on $[-\rho,\rho]$.
\end{itemize}
\begin{remark}
	Here hypothesis $H(f)''(iv)$ is much weaker than hypothesis $H(f)'(iv)$. The linearity of the differential operator leads to a more general strong comparison principle, which is a trivial consequence of the maximum principle.
\end{remark}

It is clear from the above hypotheses that in this paper we deal with subcritical reaction terms.

For the boundary function $\beta (z,x)$, we start with the following conditions.

\smallskip
$H(\beta):$ $\beta\in C(\partial\Omega\times\RR)\cap C^{0,\alpha}_{loc}(\partial\Omega\times\RR)$ for some $\alpha\in\left(0,1\right]$, $\beta(z,0)=0$ for all $z\in \partial\Omega$ and
\begin{itemize}
	\item[(i)] $c_{12}|x|^q\leq\beta(z,x)x$ for all $(z,x)\in \partial\Omega\times\RR$ and some $c_{12}>0$, with $q<\tau<p$ (see $H(a)(iv)$);
	\item[(ii)] $\lim\limits_{x\rightarrow\pm\infty}\frac{\beta(z,x)}{|x|^{p-1}x}=0$ uniformly for all $z\in \partial\Omega$;
	\item[(iii)] $\limsup\limits_{x\rightarrow 0}\frac{\beta(z,x)}{|x|^{q-2}x}\leq c_{13}$ uniformly for all $z\in \partial\Omega$, with $c_{13}>0$;
	\item[(iv)] if $B(z,x)=\int^x_0\beta(z,s)ds$ then $c_{14}|x|^q\leq\tau B(z,x)-\beta(z,x)x$ for all $(z,x)\in \partial\Omega\times\RR$ and some $c_9>0$ (see $H(a)(iv)$).
\end{itemize}
\begin{remark}
	The above hypotheses imply that
	\begin{equation}\label{eq19}
		|\beta(z,x)|\leq c_{15}|x|^{q-1}\ \mbox{for all}\ (z,x)\in \partial\Omega\times\RR\ \mbox{and some}\ c_{15}>0.
	\end{equation}
	
	So, the boundary term $\beta(z,\cdot)$ is strictly $(p-1)$-superlinear. The typical example of a function satisfying hypotheses $H(\beta)$ above is the following (for the sake of simplicity we again drop the $z$-dependence):
	$$\beta(x)=|x|^{q-2}x\ \mbox{for all}\ x\in\RR,\ \mbox{with}\ 1<q<\tau<p.$$
	
	Other possibilities are the functions
	\begin{eqnarray*}
		&&\beta(x)=|x|^{q-2}x+|x|^{\mu-2}x\ \mbox{for all}\ x\in\RR,\ \mbox{with}\ 1<q<\mu<\tau<p\\
		&&\beta(x)=\left\{\begin{array}{ll}
			|x|^{q-2}x&\mbox{if}\ |x|\leq 1\\
			2|x|^{\mu-2}x-|x|^{q-2}x&\mbox{if}\ |x|>1
		\end{array}\right.\ \mbox{with}\ 1<q<\mu<\tau<p,\\
		&&\hspace{7cm}q<\mu<2q,\ \mu<\frac{\tau(2q-\mu)}{q}.
	\end{eqnarray*}
\end{remark}

Later to deal with the semilinear problem we will need a stronger version of these conditions.

$H(\beta)':$ $\beta\in C(\partial\Omega\times\RR)\cap C^{0,\alpha}_{loc}(\partial\Omega\times\RR)$ with $\alpha\in(0,1)$, for all $z\in\partial\Omega$, $\beta(z,0)=0$, $\beta(z,\cdot)\in C^1(\RR\backslash\{0\})$ and
\begin{itemize}
	\item[(i)] $c_{50}|x|^q\leq\beta(z,x)x$ for all $(z,x)\in\partial\Omega\times\RR$, some $c_{50}>0$ and with $q\in(1,2)$;
	\item[(ii)] $\lim\limits_{x\rightarrow\pm\infty}\frac{\beta(z,x)}{x}=0$ uniformly for all $z\in\partial\Omega$;
	\item[(iii)] $\limsup\limits_{x\rightarrow 0}\frac{\beta(z,x)}{|x|^{q-2}x}\leq c_{51}$ uniformly for all $z\in\partial\Omega$, with $c_{51}>0$;
	\item[(iv)] if $B(z,x)=\int^x_0\beta(z,s)ds$, then $c_{52}|x|^q\leq 2B(z,x)-\beta(z,x)x$ for all $(z,x)\in\partial\Omega\times\RR$ and some $c_{52}>0$.
\end{itemize}

Now we state our main results.

\smallskip
{\bf Proposition A.} {\it
	If hypotheses $H(a),H(f),H(\beta)$ hold, then
	\begin{itemize}
		\item[(a)] for every $\lambda\in(0,\lambda_+)$ problem \eqref{eqP} admits two positive solutions
		$$u_0,\hat{u}\in D_+;$$
		\item[(b)] for every $\lambda\in(0,\lambda_-)$ problem \eqref{eqP} admits two negative solutions
		$$v_0,\hat{v}\in -D_+;$$
		\item[(c)] for every $\lambda\in(0,\lambda_0=\min\{\lambda_+,\lambda_-\})$ problem \eqref{eqP} admits four nontrivial constant sign solutions
		$$u_0,\hat{u}\in D_+\ \mbox{and}\ v_0,\hat{v}\in-D_+.$$
	\end{itemize}
}

\smallskip
{\bf Proposition B.} {\it
	If hypotheses $H(a),H(f),H(\beta)$ hold, then
	\begin{itemize}
		\item[(a)] for every $\lambda\in(0,\lambda_+)$ problem \eqref{eqP} has a smallest positive solution
		$$u^*_{\lambda}\in D_+;$$
		\item[(b)] for every $\lambda\in(0,\lambda_-)$ problem \eqref{eqP} has a biggest negative solution
		$$v^*_{\lambda}\in-D_+.$$
	\end{itemize}
}

\smallskip
{\bf Theorem C.} {\it
	If hypotheses $H(a),H(f)',H(\beta)$ hold, then there exists $\lambda_0>0$ such that for every $\lambda\in(0,\lambda_0)$ problem \eqref{eqP} has at least five nontrivial smooth solutions
	$$u_0,\hat{u}\in D_+,\ v_0,\hat{v}\in-D_+,\ y_0\in C^1(\overline{\Omega})\ \mbox{nodal}.$$
	Moreover, for every $\lambda\in(0,\lambda_0)$, problem \eqref{eqP} has extremal constant sign solutions
	$$u^*_{\lambda}\in D_+\ \mbox{and}\ v^*_{\lambda}\in-D_+$$
	such that $y_0\in[v^*_{\lambda},u^*_{\lambda}]\cap C^1(\overline{\Omega})$ and the map $\lambda\mapsto u^*_{\lambda}$ is
	\begin{itemize}
		\item strictly increasing (that is, $\mu<\lambda\Rightarrow u^*_{\lambda}-u^*_{\mu}\in {\rm int}\, \hat{C}_+$),
		\item left continuous from $(0,\lambda_0)$ into $C^1(\overline{\Omega})$,
	\end{itemize}
	while the map $\lambda\mapsto v^*_{\lambda}$ is
	\begin{itemize}
		\item	strictly decreasing (that is, $\mu<\lambda\Rightarrow v^*_{\mu}-v^*_{\lambda}\in {\rm int}\,\hat{C}_+$),
		\item right continuous.
	\end{itemize}
}

\smallskip
Finally, for the semilinear problem
\begin{equation}
	\left\{\begin{array}{ll}
		-\Delta u(z)=f(z,u(z))&\mbox{in}\ \Omega,\\
		\frac{\partial u}{\partial n}=\lambda\beta(z,u)&\mbox{on}\ \partial\Omega
	\end{array}\right\}\tag{$S_{\lambda}$}\label{eqS}
\end{equation}
we prove the following multiplicity result.

\smallskip
{\bf Theorem D.} {\it
	If hypotheses $H(f)'',\ H(\beta)'$ hold, then we can find $\lambda_0>0$ such that for every $\lambda\in(0,\lambda_0)$ problem \eqref{eqS} has at least six nontrivial smooth solutions
		\begin{eqnarray*}
				&&u_0,\hat{u}\in D_+,\ v_0,\hat{v}\in-D_+\\
				&&y_0\in C^1(\overline{\Omega})\ \mbox{nodal and}\ \hat{y}\in C^1(\overline{\Omega}).
		\end{eqnarray*}
}

\smallskip
Concluding this section, we point out that we use the word ``solution" instead of ``weak solution", since our solution has a pointwise a.e. interpretation (like the Carath\'eodory or strong solutions from the theory of ordinary differential equations). This pointwise interpretation of the solutions is convenient for the use of strong comparison principles (Proposition \ref{prop6}).

\section{Constant Sign Solutions}

In this section, we show that for small $\lambda>0$, problem \eqref{eqP} admits at least four nontrivial constant sign smooth solutions (two positive and two negative). We also establish the existence of extremal constant sign solutions $u^*_{\lambda}$, $v^*_{\lambda}$ and determine the monotonicity and continuity properties of the maps $\lambda\mapsto u^*_{\lambda}$ and $\lambda\mapsto v^*_{\lambda}$.

The energy (Euler) functional of problem \eqref{eqP} is  $\varphi_{\lambda}:W^{1,p}(\Omega)\rightarrow\RR$ ($\lambda>0$) and it is defined by
$$\varphi_{\lambda}(u)=\int_{\Omega}G(Du)dz-\int_{\Omega}F(z,u)dz-\lambda\int_{\partial\Omega}B(z,u)d\sigma\ \mbox{for all}\ u\in W^{1,p}(\Omega).$$

Evidently, $\varphi_{\lambda}\in C^1(W^{1,p}(\Omega))$.

Let $\tilde{c}_2\in(0,c^*_2)$ and consider the following truncation-perturbation of the reaction term $f(z,\cdot)$:
\begin{eqnarray}\label{eq20}
	&&\hat{f}_+(z,x)=\left\{\begin{array}{ll}
		0&\mbox{if}\ x\leq 0\\
		f(z,x)+\tilde{c}_2x^{p-1}&\mbox{if}\ 0<x
	\end{array}\right.\nonumber\\
&&\mbox{and} \\
	&&\hat{f}_-(z,x)=\left\{\begin{array}{ll}
		f(z,x)+\tilde{c}_2|x|^{p-2}x&\mbox{if}\ x<0\\
		0&\mbox{if}\ 0\leq x.\nonumber
	\end{array}\right.
\end{eqnarray}

Both are Carath\'eodory functions. We set $\hat{F}_{\pm}(z,x)=\int^x_0\hat{f}_{\pm}(z,s)ds$. In addition, we introduce the positive and negative truncations of the boundary term $\beta(z,\cdot)$:
\begin{eqnarray}\label{eq21}
	&&\beta_+(z,x)=\left\{\begin{array}{ll}
		0&\mbox{if}\ x\leq 0\\
		\beta(z,x)&\mbox{if}\ 0<x
	\end{array}\right. \nonumber\\
&&\mbox{and} \\
	&&\beta_-(z,x)=\left\{\begin{array}{ll}
		\beta(z,x)&\mbox{if}\ x<0\\
		0&\mbox{if}\ 0\leq x
	\end{array}\right.\ \mbox{for all}\ (z,x)\in \partial\Omega\times\RR.\nonumber
\end{eqnarray}

Clearly, $\beta_{\pm}\in C(\partial\Omega\times\RR)$. We set $B_{\pm}(z,x)=\int^x_0\beta_{\pm}(z,s)ds$ for all $(z,x)\in \partial\Omega\times\RR$. We consider the $C^1$-functionals $\hat{\varphi}^{\pm}_{\lambda}:W^{1,p}(\Omega)\rightarrow\RR$ $\lambda>0$, defined by
$$\hat{\varphi}^{\pm}_{\lambda}(u)=\int_{\Omega}G(Du)dz+\frac{\tilde{c}_2}{p}||u||^p_p-\int_{\Omega}\hat{F}_{\pm}(z,u)dz-\lambda\int_{\partial\Omega}B_{\pm}(z,u)d\sigma\ \mbox{for all}\ u\in W^{1,p}(\Omega).$$
\begin{prop}\label{prop8}
	If hypotheses $H(a),H(f),H(\beta)$ hold and $\lambda>0$, then the functionals $\hat{\varphi}^{\pm}_{\lambda}$ satisfy the $C$-function.
\end{prop}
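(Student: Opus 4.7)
\medskip
\noindent\textbf{Proof plan for Proposition \ref{prop8}.} I will focus on $\hat\varphi^+_\lambda$; the argument for $\hat\varphi^-_\lambda$ is entirely symmetric. Let $\{u_n\}_{n\ge 1}\subseteq W^{1,p}(\Omega)$ be a Cerami sequence: $|\hat\varphi^+_\lambda(u_n)|\le M_1$ and $(1+\|u_n\|)(\hat\varphi^+_\lambda)'(u_n)\to 0$ in $W^{1,p}(\Omega)^*$. The plan is to (i) kill the negative part, (ii) bound the positive part via the AR condition, (iii) extract a strongly convergent subsequence via the $(S)_+$ property of $A$.

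\medskip
\noindent\emph{Step 1 (negative part vanishes).} Test $(\hat\varphi^+_\lambda)'(u_n)$ against $-u_n^-\in W^{1,p}(\Omega)$. Since both $\hat f_+(z,\cdot)$ and $\beta_+(z,\cdot)$ vanish on $(-\infty,0]$, the reaction and boundary contributions disappear, leaving
$$\langle A(u_n),-u_n^-\rangle-\tilde c_2\int_\Omega|u_n|^{p-2}u_n u_n^-\,dz=o(1).$$
Using the oddness of $a$ (so that $(a(Du_n),Du_n^-)_{\RR^N}=-(a(Du_n^-),Du_n^-)_{\RR^N}$) together with Lemma \ref{lem2}(c), I obtain
$$\tfrac{c_1}{p-1}\|Du_n^-\|_p^p+\tilde c_2\|u_n^-\|_p^p\le \epsilon_n\to 0,$$
so $u_n^-\to 0$ in $W^{1,p}(\Omega)$.

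\medskip
\noindent\emph{Step 2 (positive part is bounded).} Using $\hat F_+(z,u_n)=F(z,u_n^+)+\tfrac{\tilde c_2}{p}(u_n^+)^p$ and $G(Du_n)=G(Du_n^+)+G(Du_n^-)$ a.e., I compute $\eta\hat\varphi^+_\lambda(u_n)-\langle(\hat\varphi^+_\lambda)'(u_n),u_n\rangle$ with $\eta>p$ from $H(f)(ii)$. Hypothesis $H(a)(iv)$ (third inequality, giving $\langle A(u),u\rangle\le p\int_\Omega G(Du)+\bar c|\Omega|$) produces a gradient lower bound $(\eta-p)\frac{c_1}{p(p-1)}\|Du_n\|_p^p$; the $\tilde c_2$-perturbation terms collapse to $\tilde c_2(\eta/p-1)\|u_n^-\|_p^p$ (which is $o(1)$ by Step 1); the AR inequality gives $\int_\Omega(f(z,u_n^+)u_n^+-\eta F(z,u_n^+))\,dz\ge -C$; and the sublinear boundary piece is controlled by $c\int_{\partial\Omega}(u_n^+)^q d\sigma\le c'\|u_n^+\|^q$ via \eqref{eq19} and the trace theorem. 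This yields
$$\|Du_n\|_p^p\le C_1+C_2\|u_n^+\|^q.$$
To bound $\|u_n\|_p$, I combine this with the upper bound on $\hat\varphi^+_\lambda(u_n)$ and the AR lower bound $F(z,x)\ge c_{11}|x|^\eta$ for $|x|\ge M$ (see \eqref{eq18}) by a normalization argument: assuming $\|u_n^+\|\to\infty$ and setting $w_n=u_n^+/\|u_n^+\|$, the gradient estimate and $q<p$ force $\|Dw_n\|_p\to 0$; Rellich then delivers a subsequence $w_n\to w$ in $L^\eta(\Omega)$ (using $\eta\le r<p^*$) with $w$ a nonnegative constant of unit $L^p$-norm, hence $w\equiv\xi>0$. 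Consequently $u_n^+(z)\to+\infty$ a.e., and \eqref{eq18} combined with $\|u_n^+\|_\eta^\eta\sim\xi^\eta|\Omega|\,\|u_n^+\|^\eta$ forces $\int_\Omega F(z,u_n^+)\,dz$ to grow like $\|u_n^+\|^\eta$, contradicting the bound $\int_\Omega F(z,u_n^+)\,dz\le C(1+\|u_n\|^p)+c\|u_n^+\|^q$ extracted from $\hat\varphi^+_\lambda(u_n)\ge -M_1$ since $\eta>p>q$. Therefore $\{u_n^+\}$, and hence $\{u_n\}$, is bounded in $W^{1,p}(\Omega)$.

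\medskip
\noindent\emph{Step 3 (strong convergence via $(S)_+$).} Passing to a subsequence, $u_n\rightharpoonup u$ in $W^{1,p}(\Omega)$, $u_n\to u$ in $L^r(\Omega)$ (since $r<p^*$), and $u_n\to u$ in $L^q(\partial\Omega)$ (since $q<p$ and the trace is compact into the subcritical boundary range). Then $\langle(\hat\varphi^+_\lambda)'(u_n),u_n-u\rangle\to 0$; the $\tilde c_2$-perturbation, reaction, and boundary terms all tend to zero by dominated convergence using the growth bounds on $\hat f_+$ and $\beta_+$, so
$$\limsup_{n\to\infty}\langle A(u_n),u_n-u\rangle\le 0.$$
Proposition \ref{prop4} (the $(S)_+$ property) yields $u_n\to u$ in $W^{1,p}(\Omega)$, which is the Cerami condition.

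\medskip
\noindent The main obstacle is Step 2: the perturbation term $\tfrac{\tilde c_2}{p}\|u\|_p^p$ is not of AR type, so one must carefully track how it interacts with the truncation to ensure cancellation with the positive-part reaction, and the normalization argument must accommodate the sublinear boundary contribution without contaminating the AR driven contradiction.
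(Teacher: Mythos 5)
Your proposal is correct, and its skeleton (eliminating $u_n^-$ by testing with $-u_n^-$, exploiting the AR condition together with $H(a)(iv)$ and the $(p-1)$-sublinear boundary bound \eqref{eq19}, then concluding via the $(S)_+$ property of $A$) coincides with the paper's. Where you genuinely diverge is in how the boundedness of $\{u_n^+\}$ is completed. The paper forms \emph{two} combinations of the energy bound with the derivative estimate: the $\eta$-combination, which you also use and which yields $\|Du_n^+\|_p^p\le c(1+\|u_n^+\|^q)$, and in addition the $p$-combination, which via $H(a)(iv)$ and \eqref{eq18} yields $\|u_n^+\|_\eta^\eta\le c(1+\|u_n^+\|^q)$; adding the two and invoking the fact that $u\mapsto\|Du\|_p+\|u\|_\eta$ is an equivalent norm on $W^{1,p}(\Omega)$ (possible since $\eta\le r<p^*$) gives $\|u_n^+\|^p\le c(1+\|u_n^+\|^q)$ and hence boundedness directly, with no contradiction argument. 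You instead keep only the gradient estimate and argue by normalization: $w_n=u_n^+/\|u_n^+\|$ has $\|Dw_n\|_p\to 0$ (since $q<p$), so $w_n\to\xi>0$ constant in $L^p$ and $L^\eta$, and then the lower bound $F(z,x)\ge c_{11}|x|^\eta-c_{18}$ makes $\int_\Omega F(z,u_n^+)dz$ grow like $\|u_n^+\|^\eta$, contradicting the upper bound of order $\|u_n^+\|^p+\|u_n^+\|^q$ extracted from $\hat\varphi^+_\lambda(u_n)\ge -M_1$ and the gradient estimate. This is sound: the compactness into $L^\eta$ you need is justified exactly because the AR condition and $H(f)(i)$ force $\eta\le r<p^*$ (and even Fatou plus $L^p$-compactness would do), and your cancellation of the $\tilde c_2$-perturbation against the truncation is the same bookkeeping the paper performs. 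The trade-off: the paper's route is direct, subsequence-free and gives an explicit a priori bound, at the cost of quoting the equivalent-norm result for $\|Du\|_p+\|u\|_\eta$; your route avoids that lemma but is indirect and uses the constancy of the weak limit (implicitly, connectedness of $\Omega$, or a trivial patch restricting to one component). One cosmetic slip: in your final bound the term $C(1+\|u_n\|^p)$ can be sharpened to $C(1+\|u_n^+\|^q)$ once the gradient estimate is used, but as written it still yields the contradiction since $\eta>p$.
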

\begin{proof}
	We give the proof for the functional $\hat{\varphi}^+_{\lambda}$, the proof for $\hat{\varphi}^-_{\lambda}$ being similar.
	
	So, we consider a sequence $\{u_n\}\subseteq W^{1,p}(\Omega)$ such that
	\begin{eqnarray}
		&&|\hat{\varphi}^+_{\lambda}(u_n)|\leq M_1\ \mbox{for some}\ M_1\ \mbox{and all}\ n\in\NN,\label{eq22}\\
		&&(1+||u_n||)(\hat{\varphi}^+_{\lambda})'(u_n)\rightarrow 0\ \mbox{in}\ W^{1,p}(\Omega)^*\ \mbox{as}\ n\rightarrow\infty\,.\label{eq23}
	\end{eqnarray}
	
	From (\ref{eq23}) we have
	\begin{eqnarray}\label{eq24}
		&&\left|\left\langle A(u_n),h\right\rangle+\int_{\Omega}\tilde{c}_2|u_n|^{p-2}u_nhdz-\int_{\Omega}\hat{f}_+(z,u_n)hdz-\lambda\int_{\partial\Omega}\beta_+(z,u_n)hd\sigma\right|\leq\frac{\epsilon_n||h||}{1+||u_n||}\\
		&&\mbox{for all}\ h\in W^{1,p}(\Omega)\ \mbox{with}\ \epsilon_n\rightarrow 0^+.\nonumber
	\end{eqnarray}
	
	In (\ref{eq24}) we choose $h=-u^-_n\in W^{1,p}(\Omega)$. Using (\ref{eq20}) and (\ref{eq21}), we obtain
	\begin{eqnarray}\label{eq25}
		&&\int_{\Omega}(a(Du_n),-Du^-_n)_{\RR^N}dz+\tilde{c}_2||u^-_n||^p_p\leq\epsilon_n\ \mbox{for all}\ n\in\NN,\nonumber\\
		&\Rightarrow&\frac{c_1}{p-1}||Du^-_n||^p_p+\tilde{c}_2||u^-_n||^p_p\leq\epsilon_n\ \mbox{for all}\ n\in\NN\ (\mbox{see Lemma \ref{lem2}}),\nonumber\\
		&\Rightarrow&u^-_n\rightarrow 0\ \mbox{in}\ W^{1,p}(\Omega).
	\end{eqnarray}
	
	Using (\ref{eq19}), (\ref{eq21}), (\ref{eq20}), (\ref{eq25}) and hypothesis $H(f)(i)$, we have
	\begin{eqnarray}\label{eq26}
		&&\int_{\Omega}pG(Du^+_n)dz-\int_{\Omega}pF(z,u^+_n)dz-\lambda\int_{\partial\Omega}pB(z,u^+_n)d\sigma\leq M_2\\
		&&\mbox{for some}\ M_2>0,\ \mbox{all}\ n\in\NN.\nonumber
	\end{eqnarray}
	
	In (\ref{eq24}) we choose $h=u^+_n\in W^{1,p}(\Omega)$. Then
	\begin{eqnarray}\label{eq27}
		 &&-\int_{\Omega}(a(Du^+_n),Du^+_n)_{\RR^N}dz+\int_{\Omega}f(z,u^+_n)u^+_ndz+\lambda\int_{\partial\Omega}\beta(z,u^+_n)u^+_nd\sigma\leq\epsilon_n\\
		&&\mbox{for all}\ n\in\NN.\nonumber
	\end{eqnarray}
	
	Adding (\ref{eq26}) and (\ref{eq27}), we obtain
	\begin{eqnarray}\label{eq28}
		&&\int_{\Omega}[pG(Du^+_n)-(a(Du^+_n),Du^+_n)_{\RR^N}]dz+\int_{\Omega}[f(z,u^+_n)u^+_n-pF(z,u^+_n)]dz\nonumber\\
		&&\leq M_3+\lambda\int_{\partial\Omega}[pB(z,u^+_n)-\beta(z,u^+_n)u^+_n]d\sigma\ \mbox{for some}\ M_3>0\ \mbox{and all}\ n\in\NN\nonumber\\
		&\Rightarrow&\int_{\Omega}[f(z,u^+_n)u^+_n-pF(z,u^+_n)]dz\leq c_{16}(1+||u^+_n||^q)\ \mbox{for some}\ c_{16}>0\ \mbox{and all}\ n\in\NN\nonumber\\
		&&(\mbox{see hypothesis}\ H(a)(iv)\ \mbox{and (\ref{eq19})})\nonumber\\
		&\Rightarrow&\int_{\Omega}[f(z,u^+_n)u^+_n-\eta F(z,u^+_n)]dz+(\eta-p)\int_{\Omega}F(z,u^+_n)dz\leq c_{16}(1+||u^+_n||^q)\nonumber\\
		&&\mbox{for all}\ n\in\NN,\nonumber\\
		&\Rightarrow&(\eta-p)\int_{\Omega}F(z,u^+_n)dz\leq c_{17}(1+||u^+_n||^q)\ \mbox{for some}\ c_{17}>0\ \mbox{and all}\ n\in\NN\\
		&&(\mbox{see hypotheses}\ H(f)(i),(ii)).\nonumber
	\end{eqnarray}
	
	From (\ref{eq18}) and hypothesis $H(f)(i)$, we see that
	\begin{equation}\label{eq29}
		c_{11}|x|^{\eta}-c_{18}\leq F(z,x)\ \mbox{for almost all}\ z\in\Omega,\ \mbox{all}\ x\in\RR,\ \mbox{and some}\ c_{18}>0.
	\end{equation}
	
	Using (\ref{eq29}) in (\ref{eq28}) and recalling that $\eta>p$, we obtain
	\begin{eqnarray}\label{eq30}
		&||u^+_n||^{\eta}_{\eta}&\leq c_{19}(1+||u^+_n||^q)\ \mbox{for some}\ c_{19}>0\ \mbox{and all}\ n\in\NN,\nonumber\\
		\Rightarrow&||u^+_n||^p_{\eta}&\leq c^{p/\eta}_{19}(1+||u^+_n||^q)^{p/\eta}\nonumber\\
		&&\leq c_{20}(1+||u^+_n||^{pq/\eta})\ \mbox{for}\ c_{20}=c^{p/\eta}_{19}\ \mbox{and all}\ n\in\NN\ (\mbox{note that}\ \frac{p}{\eta}\in(0,1)).
	\end{eqnarray}
	
It follows	from (\ref{eq22}) and (\ref{eq25})  that for all $n\in\NN$
	\begin{equation}\label{eq31}
		\int_{\Omega}\eta G(Du^+_n)dz-\int_{\Omega}\eta F(z,u^+_n)dz-\lambda\int_{\partial\Omega}\eta B(z,u^+_n)d\sigma\leq M_4,\ \mbox{for some}\ M_4>0.
	\end{equation}
	
	Adding (\ref{eq27}) and (\ref{eq31}), we have
	\begin{eqnarray}\label{eq32}
		&&\int_{\Omega}[\eta G(Du^+_n)-(a(Du^+_n),Du^+_n)_{\RR^N}]dz+\int_{\Omega}[f(z,u^+_n)u^+_n-\eta F(z,u^+_n)]dz\nonumber\\
		&&\leq M_5+\lambda\int_{\partial\Omega}[\eta B(z,u^+_n)-\beta(z,u^+_n)u^+_n]d\sigma\ \mbox{for some}\ M_5>0\ \mbox{and all}\ n\in\NN.
	\end{eqnarray}
	
	Note that
	\begin{eqnarray}\label{eq33}
		&&\int_{\Omega}[\eta G(Du^+_n)-(a(Du^+_n),Du^+_n)_{\RR^N}]dz\nonumber\\
		&&=(\eta-p)\int_{\Omega}G(Du^+_n)dz+\int_{\Omega}[pG(Du^+_n)-(a(Du^+_n),Du^+_n)_{\RR^N}]dz\nonumber\\
		&&\geq\frac{(\eta-p)c_1}{p(p-1)}||Du^+_n||^p_p-\bar{c}|\Omega|_n\ (\mbox{see Corollary \ref{cor3} and hypothesis}\ H(a)(iv)).
	\end{eqnarray}
	
	Also, hypotheses $H(f)(i),(ii)$ imply that
	\begin{equation}\label{eq34}
		-c_{21}\leq\int_{\Omega}[f(z,u^+_n)u^+_n-\eta F(z,u^+_n)]dz\ \mbox{for some}\ c_{21}>0\ \mbox{and all}\ n\in\NN.
	\end{equation}
	
	Moreover, from (\ref{eq19}) we have
	\begin{eqnarray}\label{eq35}
		&&\int_{\Omega}[\eta B(z,u^+_n)-\beta(z,u^+_n)u^+_n]d\sigma\leq c_{22}||u^+_n||^q\ \mbox{for some}\ c_{22}>0\ \mbox{and all}\ n\in\NN.
	\end{eqnarray}
	
	Returning to (\ref{eq32}) and using (\ref{eq33}), (\ref{eq34}), (\ref{eq35}), we obtain
	\begin{equation}\label{eq36}
		||Du^+_n||^p_p\leq c_{23}(1+||u^+_n||^q)\ \mbox{for some}\ c_{23}>0\ \mbox{and all}\ n\in\NN.
	\end{equation}
	
	It follows from (\ref{eq30}) and (\ref{eq36}) that
	\begin{equation}\label{eq37}
		||Du^+_n||^p_p+||u^+_n||^p_{\eta}\leq c_{24}(1+||u^+_n||^q)\ \mbox{for some}\ c_{24}>0\ \mbox{and all}\ n\in\NN.
	\end{equation}
	
	We can always assume that $\eta\leq p^*$ (see hypotheses $H(f)(i),(ii)$). We know that
	$$u\mapsto ||Du||_p+||u||_{\eta}$$
	is an equivalent norm on the Sobolev space $W^{1,p}(\Omega)$ (see Gasinski and Papageorgiou \cite[p. 227]{13}). Therefore from (\ref{eq37}) we can infer that
	\begin{eqnarray*}
		&&||u^+_n||^p\leq c_{25}(1+||u^+_n||^q)\ \mbox{for some}\ c_{20}>0\ \mbox{and all}\ n\in\NN,\\
		&\Rightarrow&\{u^+_n\}_{n\geq 1}\subseteq W^{1,p}(\Omega)\ \mbox{is bounded}\ (\mbox{recall that}\ q<p).
	\end{eqnarray*}
	
	This together with (\ref{eq25}) imply that $\{u_n\}_{n\geq 1}\subseteq W^{1,p}(\Omega)$ is bounded and so we may assume that
	\begin{eqnarray}\label{eq38}
		&&u_n\stackrel{w}{\rightarrow}u\ \mbox{in}\ W^{1,p}(\Omega)\ \mbox{and}\ u_n\rightarrow u\ \mbox{in}\ L^r(\Omega)\ \mbox{and in}\ L^q(\partial\Omega)\ (\mbox{recall that}\ r<p^*).
	\end{eqnarray}
	
	In (\ref{eq24}) we choose $h=u_n-u\in W^{1,p}(\Omega)$, pass to the limit as $n\rightarrow\infty$, and use (\ref{eq38}). Then
	\begin{eqnarray*}
		&&\lim\limits_{n\rightarrow\infty}\left\langle A(u_n),u_n-u\right\rangle=0,\\
		&\Rightarrow&u_n\rightarrow u\ \mbox{in}\ W^{1,p}(\Omega)\ (\mbox{see (\ref{eq38}) and Proposition \ref{prop4}}),\\
		&\Rightarrow&\hat{\varphi}^+_{\lambda}\ \mbox{satisifes the C-condition}.
	\end{eqnarray*}
	
	Similarly for the functional $\hat{\varphi}^-_{\lambda}$.
\end{proof}

In a similar fashion, we prove the next property.
\begin{prop}\label{prop9}
	If hypotheses $H(a),H(f),H(\beta)$ hold and $\lambda>0$, then the functional $\varphi_{\lambda}$ satisfies the $C$-condition.
\end{prop}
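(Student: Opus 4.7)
The plan is to follow the same template as Proposition \ref{prop8}, but applied directly to $\varphi_\lambda$ without the positive/negative truncation. The key simplification is that we no longer have the preliminary step $u_n^{\mp}\to 0$; in compensation we must apply the Ambrosetti--Rabinowitz bound and hypothesis $H(a)(iv)$ to the full sequence $\{u_n\}$ rather than to its positive part. I would take a sequence $\{u_n\}\subseteq W^{1,p}(\Omega)$ with $|\varphi_\lambda(u_n)|\le M_1$ and $(1+\|u_n\|)\varphi_\lambda'(u_n)\to 0$ in $W^{1,p}(\Omega)^*$, so that
$$\Bigl|\langle A(u_n),h\rangle-\int_\Omega f(z,u_n)h\,dz-\lambda\int_{\partial\Omega}\beta(z,u_n)h\,d\sigma\Bigr|\le\frac{\epsilon_n\|h\|}{1+\|u_n\|}$$
for all $h\in W^{1,p}(\Omega)$, with $\epsilon_n\to 0^+$.

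The main work is to establish boundedness of $\{u_n\}$ in $W^{1,p}(\Omega)$. First I would test with $h=u_n$ and add to $p\varphi_\lambda(u_n)$, which yields
$$\int_\Omega\bigl[pG(Du_n)-(a(Du_n),Du_n)_{\RR^N}\bigr]dz+\int_\Omega\bigl[f(z,u_n)u_n-pF(z,u_n)\bigr]dz\le M_3+\lambda\!\int_{\partial\Omega}\!\bigl[pB(z,u_n)-\beta(z,u_n)u_n\bigr]d\sigma.$$
By $H(a)(iv)$ the first $\Omega$-integrand is bounded below by $-\bar c$, while (\ref{eq19}) bounds the boundary term by $c_{16}\|u_n\|^q$. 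Then writing $f(z,u_n)u_n-pF(z,u_n)=[f(z,u_n)u_n-\eta F(z,u_n)]+(\eta-p)F(z,u_n)$ and invoking the AR inequality (\ref{eq18}) and $H(f)(i)$, I obtain $\|u_n\|_\eta^\eta\le c_{19}(1+\|u_n\|^q)$, so
$$\|u_n\|_\eta^p\le c_{20}(1+\|u_n\|^{pq/\eta}).$$

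Next, testing with $h=u_n$ and adding to $\eta\varphi_\lambda(u_n)$ gives
$$\int_\Omega\bigl[\eta G(Du_n)-(a(Du_n),Du_n)_{\RR^N}\bigr]dz+\int_\Omega\bigl[f(z,u_n)u_n-\eta F(z,u_n)\bigr]dz\le M_5+\lambda\!\int_{\partial\Omega}\!\bigl[\eta B(z,u_n)-\beta(z,u_n)u_n\bigr]d\sigma.$$
Splitting the first integral as $(\eta-p)\int_\Omega G(Du_n)\,dz+\int_\Omega[pG(Du_n)-(a(Du_n),Du_n)_{\RR^N}]\,dz$ and using Corollary \ref{cor3} together with $H(a)(iv)$ yields a lower bound $\frac{(\eta-p)c_1}{p(p-1)}\|Du_n\|_p^p-\bar c|\Omega|_N$. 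The AR condition controls the second $\Omega$-integrand below by $-c_{21}$, and (\ref{eq19}) controls the boundary integral by $c_{22}\|u_n\|^q$. Therefore $\|Du_n\|_p^p\le c_{23}(1+\|u_n\|^q)$, and combining with the previous $L^\eta$-bound,
$$\|Du_n\|_p^p+\|u_n\|_\eta^p\le c_{24}(1+\|u_n\|^q).$$
Since (after assuming $\eta\le p^*$, justified by $H(f)(i),(ii)$) the map $u\mapsto\|Du\|_p+\|u\|_\eta$ is an equivalent norm on $W^{1,p}(\Omega)$, this gives $\|u_n\|^p\le c_{25}(1+\|u_n\|^q)$, and since $q<p$, the sequence $\{u_n\}$ is bounded.

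Having the boundedness, I would extract a subsequence with $u_n\stackrel{w}{\to}u$ in $W^{1,p}(\Omega)$, $u_n\to u$ in $L^r(\Omega)$ and in $L^q(\partial\Omega)$ (using $r<p^*$ and compactness of the trace $\gamma_0$). Testing the Cerami estimate with $h=u_n-u$ and passing to the limit, the reaction and boundary integrals vanish, so $\limsup_n\langle A(u_n),u_n-u\rangle\le 0$, whereupon the $(S)_+$-property of $A$ from Proposition \ref{prop4} forces $u_n\to u$ in $W^{1,p}(\Omega)$, establishing the C-condition. The main obstacle (and the only substantive difference from Proposition \ref{prop8}) is the absence of the easy $u_n^\mp\to 0$ reduction; this is compensated by the two-sided nature of the AR condition and of $H(a)(iv)$, which allow the same chain of estimates to run on the full $u_n$.
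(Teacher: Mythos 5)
Your proposal is correct and is exactly what the paper intends: the paper proves Proposition \ref{prop9} by remarking that it follows "in a similar fashion" to Proposition \ref{prop8}, and your argument is precisely that adaptation — running the $p\varphi_\lambda$ and $\eta\varphi_\lambda$ estimates on the full sequence $u_n$ (using the two-sided AR condition, (\ref{eq29}), (\ref{eq19}) and $H(a)(iv)$ in place of the $u_n^\mp\to 0$ reduction), then concluding boundedness from $q<p$ via the equivalent norm $\|Du\|_p+\|u\|_\eta$ and strong convergence via the $(S)_+$-property of $A$.
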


Next, we provide the mountain pass geometry for the functional $\hat{\varphi}^{\pm}_{\lambda}$.
\begin{prop}\label{prop10}
	If hypotheses $H(a),H(f),H(\beta)$ hold, then there exists $\lambda_{\pm}>0$ such that for every $\lambda\in(0,\lambda_{\pm})$ we can find $\rho^{\pm}_{\lambda}$ for which we have
	 $$\inf[\hat{\varphi}^{\pm}_{\lambda}(u):||u||=\rho^{\pm}_{\lambda}]=\hat{m}^{\pm}_{\lambda}>0=\hat{\varphi}^{\pm}_{\lambda}(0).$$
\end{prop}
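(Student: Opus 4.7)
\begin{proof-sketch}
I will treat $\hat{\varphi}^+_{\lambda}$; the argument for $\hat{\varphi}^-_{\lambda}$ is symmetric. The plan is to bound $\hat{\varphi}^+_{\lambda}$ from below on a small sphere by $(\text{positive term coming from }G\text{ and the perturbation})-(\text{higher order terms from }F)-\lambda(\text{concave term from }B)$, and then exploit the ordering $q<p<r$ to produce a positive lower bound whenever $\lambda$ is small enough.

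First, using Corollary~\ref{cor3} I get $\int_\Omega G(Du)\,dz\ge\frac{c_1}{p(p-1)}\|Du\|_p^p$, so together with the perturbation term
\[
\int_\Omega G(Du)\,dz+\frac{\tilde c_2}{p}\|u\|_p^p\ \ge\ c_{26}\,\|u\|^p
\]
for some $c_{26}>0$ (taking the minimum of the two coefficients and using the definition of the Sobolev norm). Next, hypotheses $H(f)(i)$ and $H(f)(iii)$ give, for any $\varepsilon>0$, a constant $c(\varepsilon)>0$ such that $|F(z,x)|\le\frac{\varepsilon}{p}|x|^p+\frac{c(\varepsilon)}{r}|x|^r$ for a.a.\ $z\in\Omega$ and all $x\in\mathbb{R}$; from \eqref{eq19} we have $|B(z,x)|\le\frac{c_{15}}{q}|x|^q$ on $\partial\Omega\times\mathbb{R}$. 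Using the Sobolev embedding $W^{1,p}(\Omega)\hookrightarrow L^r(\Omega)$ (recall $r<p^*$) and the continuity of the trace map $W^{1,p}(\Omega)\to L^q(\partial\Omega)$, and choosing $\varepsilon$ small enough (for instance $\varepsilon<\tilde c_2$) so that the $\|u\|_p^p$ loss is absorbed into $c_{26}\|u\|^p$, I obtain
\[
\hat{\varphi}^+_\lambda(u)\ \ge\ c_{27}\|u\|^p-c_{28}\|u\|^r-\lambda\,c_{29}\|u\|^q
\]
for some $c_{27},c_{28},c_{29}>0$ independent of $\lambda$.

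Setting $t=\|u\|$, I factor out $t^q$ to get $\hat{\varphi}^+_\lambda(u)\ge t^q\bigl[\psi(t)-\lambda c_{29}\bigr]$ with $\psi(t)=c_{27}t^{p-q}-c_{28}t^{r-q}$. Since $q<p<r$, the function $\psi$ satisfies $\psi(0)=0$, $\psi(t)\to-\infty$ as $t\to\infty$, and attains a strictly positive maximum $M_*:=\psi(\rho^*)>0$ at a unique point $\rho^*>0$. Define $\lambda_+:=M_*/c_{29}$. Then for every $\lambda\in(0,\lambda_+)$ and $\rho^+_\lambda:=\rho^*$ I have
\[
\inf\bigl[\hat{\varphi}^+_\lambda(u):\|u\|=\rho^+_\lambda\bigr]\ \ge\ (\rho^*)^q\bigl(M_*-\lambda c_{29}\bigr)=:\hat m^+_\lambda>0=\hat{\varphi}^+_\lambda(0),
\]
which is the desired mountain pass geometry. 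The analogous argument with $\hat{f}_-$ and $\beta_-$ (noting that their contributions vanish on $u^+$) yields the corresponding $\lambda_->0$ and $\rho^-_\lambda$.

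The only non-routine point is arranging the constants so that the perturbation by $\tilde c_2$ genuinely dominates the small-$|x|$ behaviour of $f$; this is handled by the fact that $H(f)(iii)$ gives control by $\varepsilon|x|^{p-1}$ with $\varepsilon$ arbitrarily small, so we may always choose $\varepsilon<\tilde c_2$. Once this is done, the proof reduces to the elementary analysis of the scalar function $\psi$, where the concave--convex geometry ($q<p<r$) automatically provides the window of parameters for which a strictly positive mountain pass level on a small sphere exists.
\end{proof-sketch}
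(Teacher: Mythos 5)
Your overall strategy (lower bound of the form $c\,\|u\|^p-c'\|u\|^r-\lambda c''\|u\|^q$ on a sphere, then elementary analysis of the scalar function in $t=\|u\|$, with the smallness of $\lambda$ giving the window) is the same as the paper's, and your scalar analysis at the end is fine. The gap is in the step where you claim the lower bound itself, namely ``choosing $\varepsilon<\tilde c_2$ so that the $\|u\|_p^p$ loss is absorbed into $c_{26}\|u\|^p$''. You are treating the perturbation $\frac{\tilde c_2}{p}\|u\|_p^p$ in the definition of $\hat\varphi^+_\lambda$ as a net positive zeroth-order term, but by \eqref{eq20} the truncated primitive satisfies $\hat F_+(z,x)=F(z,x)+\frac{\tilde c_2}{p}(x^+)^p$, so
\begin{equation*}
\hat\varphi^+_\lambda(u)=\int_\Omega G(Du)\,dz+\frac{\tilde c_2}{p}\|u^-\|_p^p-\int_\Omega F(z,u^+)\,dz-\lambda\int_{\partial\Omega}B(z,u^+)\,d\sigma ,
\end{equation*}
i.e.\ on the set $\{u>0\}$ the perturbation cancels exactly. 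With only your estimate $|F(z,x)|\le\frac{\varepsilon}{p}|x|^p+c(\varepsilon)|x|^r$ (which uses just $H(f)(i),(iii)$), for $u\ge 0$ you obtain at best $\hat\varphi^+_\lambda(u)\ge\frac{c_1}{p(p-1)}\|Du\|_p^p-\frac{\varepsilon}{p}\|u\|_p^p-c(\varepsilon)\|u\|_r^r-\lambda c\|u\|^q$, which contains no positive multiple of $\|u\|_p^p$ at all; testing with positive constants shows that the inequality $\hat\varphi^+_\lambda(u)\ge c_{27}\|u\|^p-c_{28}\|u\|^r-\lambda c_{29}\|u\|^q$ cannot be derived from your ingredients, no matter how small $\varepsilon$ is.

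The missing idea is precisely the second half of hypothesis $H(f)(ii)$, $f(z,x)x\le c_1^*|x|^r-c_2^*|x|^p$ for $|x|\ge M$, together with the standing choice $\tilde c_2\in(0,c_2^*)$: these give an upper bound of the form $F(z,x)\le c_{26}|x|^r-\frac{c_2^*}{p}|x|^p$ for $|x|$ away from $0$ (the paper's estimate \eqref{eq40}), and it is this \emph{negative} $p$-power term inside $F$ that compensates the $\frac{\tilde c_2}{p}\|u^+\|_p^p$ deficit coming from $\hat F_+$, since $c_2^*>\tilde c_2$. The paper's proof is organized around exactly this point (splitting $\Omega$ into $\{u^+>\delta\}$ and $\{u^+\le\delta\}$ and comparing $c_2^*$ with $\tilde c_2$), and the transition region where $u^+$ is small but positive is the delicate part. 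Your argument never uses $H(f)(ii)$ beyond superlinearity and never uses $\tilde c_2<c_2^*$, which is the telltale sign of the gap; as written, the proof does not establish the mountain pass geometry.
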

\begin{proof}
	We again present the proof only for $\hat{\varphi}^+_{\lambda}$, since the proof for $\hat{\varphi}^-_{\lambda}$ is similar.
	
	Hypotheses $H(f)$ imply that given $\epsilon>0$, we can find $\delta>0$ and $c_{26}>0$ such that
	\begin{eqnarray}
		&&|F(z,x)|\leq\frac{\epsilon}{p}|x|^p\ \mbox{for almost all}\ z\in\Omega\ \mbox{and all}\ |x|\leq\delta\label{eq39}\\
		&&F(z,x)\leq c_{26}|x|^r-\frac{c^*_2}{p}|x|^p\ \mbox{for almost all}\ z\in\Omega\ \mbox{and all}\ |x|>\delta\,.\label{eq40}
	\end{eqnarray}
	
	Similarly, hypotheses $H(\beta)(i),(ii),(iii)$, imply that given $\epsilon>0$, we can find $c_{27}>0$ such that
	\begin{equation}\label{eq41}
		B(z,x)\leq\epsilon|x|^p+c_{27}|x|^q\ \mbox{for all}\ (z,x)\in\partial\Omega\times\RR.
	\end{equation}
	
	Then for all $u\in W^{1,p}(\Omega)$, we have
	\begin{eqnarray}\label{eq42}
		 &&\hat{\varphi}^+_{\lambda}(u)\geq\frac{c_1}{p(p-1)}||Du||^p_p+\frac{\tilde{c}_2}{p}||u||^p_p-\int_{\Omega}F(z,u^+)dz-\lambda\int_{\partial\Omega}B(z,u^+)d\sigma-\frac{\tilde{c}_2}{p}||u^+||^p_p\\
		&&(\mbox{see Corollary \ref{cor3} and (\ref{eq20})}).\nonumber
	\end{eqnarray}
	
	We have
	\begin{eqnarray*}
		\int_{\Omega}F(z,u^+)dz&=&\int_{\{u^+>\delta\}}F(z,u^+)dz+\int_{\{u^+\leq\delta\}}F(z,u^+)dz\\
		&\leq&c_{26}||u||^r-\frac{c^*_2}{p}\int_{\{u^+>\delta\}}(u^+)^pdz+\frac{\epsilon}{p}||u||^p_p\ (\mbox{see (\ref{eq39}) and (\ref{eq40})}).
	\end{eqnarray*}
	
	Also, we have
	$$\lambda\int_{\partial\Omega}B(z,u^+)d\sigma\leq\lambda\epsilon c_{28}||u||^p+\lambda c_{29}||u||^q\ \mbox{for some}\ c_{28},c_{29}>0\ \mbox{(see (\ref{eq41}))}.$$
	
	Using these two estimates and choosing small $\epsilon>0$, we obtain
	$$\hat{\varphi}^+_{\lambda}(u)\geq c_{30}||u||^p-c_{26}||u||^r-\lambda c_{29}||u||^q+\frac{c^*_2}{p}\int_{\{u^+>\delta\}}(u^+)^pdz-\frac{\tilde{c}_2}{p}||u^+||^p_p.$$
	
	Note that
	$$\int_{\{u^+>\delta\}}(u^+)^pdz\rightarrow||u^+||^p_p\ \mbox{as}\ \delta\rightarrow 0^+.$$
	
	So, given $\vartheta>0$, we can find $\delta_0>0$ such that
	$$\frac{c^*_2}{p}\int_{\{u^+>\delta\}}(u^+)^pdz\geq\frac{c^*_2}{p}(1-\vartheta)||u^+||^p_p\ \mbox{for all}\ 0<\delta\leq\delta_0.$$
	
	Then we have
	$$\hat{\varphi}^+_{\lambda}(u)\geq c_{30}||u||^p-c_{26}||u||^r-\lambda c_{29}||u||^q+\frac{1}{p}[c^*_2(1-\vartheta)-\tilde{c}_2]||u^+||^p_p.$$
	
	Since $c^*_2>\tilde{c}_2$ we choose small $\vartheta>0$ small such that $c^*_2(1-\vartheta)\geq\tilde{c}_2$. Then
	\begin{equation}\label{eq43}
		\hat{\varphi}^+_{\lambda}(u)\geq[c_{30}-(c_{26}||u||^{r-p}+\lambda c_{29}||u||^{q-p})]||u||^p.
	\end{equation}
	
	Consider the function
	$$\Im_{\lambda}(t)=c_{26}t^{r-q}+\lambda c_{29}t^{q-p}\ \mbox{for all}\ t>0.$$
	
	Since $q<p<r$, we see that $\Im_{\lambda}(t)\rightarrow+\infty$ as $t\rightarrow 0^+$ and $t\rightarrow+\infty$. So, we can find $t_0>0$ such that
	\begin{eqnarray*}
		&&\Im_{\lambda}(t_0)=\inf\limits_{\RR_+}\Im_{\lambda},\\
		&\Rightarrow&\Im'_{\lambda}(t_0)=0,\\
		&\Rightarrow&t_0=\left[\frac{(p-q)\lambda c_{29}}{(r-q)c_{26}}\right]^{\frac{1}{r-q}}.
	\end{eqnarray*}
	
	Then $\Im_{\lambda}(t_0)\rightarrow 0$ as $\lambda\rightarrow 0^+$ and so we can find small $\lambda_+>0$ such that $\Im_{\lambda}(t_0)<c_{30}$ for all $\lambda\in (0,\lambda_+)$. From (\ref{eq43}) we see that
	$$\hat{\varphi}^+_{\lambda}(u)\geq\hat{m}^+_{\lambda}>0=\hat{\varphi}^+_{\lambda}(0)\ \mbox{for all}\ u\in W^{1,p}(\Omega)\ \mbox{with}\ ||u||=\rho^+_{\lambda}=t_0(\lambda).$$
	
	Similarly, we show that there exists $\lambda_->0$ such that for every $\lambda\in(0,\lambda_-)$ we can find $\rho^-_{\lambda}>0$ for which we have
	$$\hat{\varphi}^-_{\lambda}(u)\geq\hat{m}^-_{\lambda}>0=\hat{\varphi}^-_{\lambda}(0)\ \mbox{for all}\ u\in W^{1,p}(\Omega)\ \mbox{with}\ ||u||=\rho^-_{\lambda}=t_0(\lambda).$$
\end{proof}

It is immediate from hypothesis $H(f)(ii)$ (see also (\ref{eq18})) that:
\begin{prop}\label{prop11}
	If hypotheses $H(a),H(f),H(\beta)$ hold, $u\in D_+$, and $\lambda>0$, then $\hat{\varphi}^+_{\lambda}(tu)\rightarrow-\infty$ as $t\rightarrow+\infty$.
\end{prop}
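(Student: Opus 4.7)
The plan is to exploit the superlinear lower bound on $F$ provided by the Ambrosetti--Rabinowitz condition (summarized in inequality (\ref{eq18})), together with the fact that $u \in D_+$ means $u$ is bounded away from zero on $\overline{\Omega}$. The first step is to observe that for $u \in D_+$, there exists $m > 0$ with $u(z) \geq m$ for all $z \in \overline{\Omega}$. Hence for every $t > 0$ we have $(tu)(z) > 0$ on $\overline{\Omega}$, which means by (\ref{eq20}) and (\ref{eq21})
$$\hat{F}_+(z,tu(z)) = F(z,tu(z)) + \frac{\tilde{c}_2}{p}(tu(z))^p, \qquad B_+(z,tu(z)) = B(z,tu(z)).$$
Substituting into the definition of $\hat{\varphi}^+_{\lambda}$, the perturbation term $\frac{\tilde{c}_2}{p}\|tu\|_p^p$ cancels exactly with $\int_\Omega \frac{\tilde{c}_2}{p}(tu)^p\,dz$, leaving
$$\hat{\varphi}^+_{\lambda}(tu) = \int_{\Omega} G(tDu)\,dz - \int_{\Omega} F(z,tu)\,dz - \lambda\int_{\partial\Omega} B(z,tu)\,d\sigma.$$

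Next I would estimate each of the three terms separately. For the first, Corollary \ref{cor3} gives $G(tDu(z)) \leq c_5(1 + t^p |Du(z)|^p)$, so
$$\int_{\Omega} G(tDu)\,dz \leq c_5|\Omega|_N + c_5 t^p \|Du\|_p^p.$$
For the boundary term, (\ref{eq19}) yields $|B(z,x)| \leq \tfrac{c_{15}}{q}|x|^q$, so
$$\left|\lambda\int_{\partial\Omega} B(z,tu)\,d\sigma\right| \leq \frac{\lambda c_{15}}{q}\,t^q \|u\|_{L^q(\partial\Omega)}^q.$$
Finally, since $u(z) \geq m > 0$ on $\overline{\Omega}$, choosing $t \geq M/m$ gives $tu(z) \geq M$ throughout $\Omega$, so by (\ref{eq18}) we obtain the superlinear lower bound
$$\int_{\Omega} F(z,tu)\,dz \geq c_{11}\,t^{\eta}\,\|u\|_{\eta}^{\eta}.$$

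Putting these together, for all sufficiently large $t > 0$ we arrive at
$$\hat{\varphi}^+_{\lambda}(tu) \leq c_5|\Omega|_N + c_5 t^p \|Du\|_p^p + \frac{\lambda c_{15}}{q}\,t^q \|u\|_{L^q(\partial\Omega)}^q - c_{11}\,t^{\eta}\,\|u\|_{\eta}^{\eta}.$$
Since $\eta > p > q$ (recall $\eta > p$ from $H(f)(ii)$ and $q < p$ from the growth of $\beta$) and $\|u\|_{\eta} > 0$ because $u \in D_+$ is strictly positive, the negative $t^{\eta}$ term dominates and the right-hand side tends to $-\infty$ as $t \to +\infty$. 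The argument is routine once the truncation has been dispatched; there is no real obstacle, since the AR-condition is precisely designed to guarantee this type of decay along rays. The proof for $\hat{\varphi}^-_\lambda$ evaluated on $-tu$ with $u \in D_+$ proceeds analogously and would be stated in parallel if needed.
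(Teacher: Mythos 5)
Your proof is correct and follows exactly the route the paper has in mind: the paper states the result as immediate from hypothesis $H(f)(ii)$ and the lower bound (\ref{eq18}), and your argument simply spells this out, using that $tu>0$ removes the truncation/perturbation, Corollary \ref{cor3} and (\ref{eq19}) control the gradient and boundary terms by $t^p$ and $t^q$, and (\ref{eq18}) (valid since $u\in D_+$ is bounded below by a positive constant, so $tu\geq M$ for large $t$) produces the dominating $-c_{11}t^{\eta}\|u\|_{\eta}^{\eta}$ term with $\eta>p>q$. No gaps.
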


Now, we are ready to produce constant sign solutions.
\begin{prop}\label{prop12}
	If hypotheses $H(a),H(f),H(\beta)$ hold, then
	\begin{itemize}
		\item[(a)] for every $\lambda\in(0,\lambda_+)$ problem \eqref{eqP} admits two positive solutions
		$$u_0,\hat{u}\in D_+;$$
		\item[(b)] for every $\lambda\in(0,\lambda_-)$ problem \eqref{eqP} admits two negative solutions
		$$v_0,\hat{v}\in -D_+;$$
		\item[(c)] for every $\lambda\in(0,\lambda_0=\min\{\lambda_+,\lambda_-\})$ problem \eqref{eqP} admits four nontrivial constant sign solutions
		$$u_0,\hat{u}\in D_+\ \mbox{and}\ v_0,\hat{v}\in-D_+.$$
	\end{itemize}
\end{prop}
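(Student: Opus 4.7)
The strategy is to concentrate on part (a); part (b) follows by the symmetric argument applied to $\hat{\varphi}^-_\lambda$, and part (c) is the immediate combination of (a) and (b) after setting $\lambda_0=\min\{\lambda_+,\lambda_-\}$. The plan is to exploit the mountain pass geometry of $\hat{\varphi}^+_\lambda$ already furnished by Propositions \ref{prop8}, \ref{prop10}, \ref{prop11}, and additionally to produce a second critical point as a local minimizer lying strictly inside the sphere of Proposition \ref{prop10}.

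\textbf{Step 1 (A local minimizer with negative energy).} The first claim is that $\inf\{\hat{\varphi}^+_\lambda(u):||u||\leq\rho^+_\lambda\}<0$. Picking any $u\in D_+$ and estimating $\hat{\varphi}^+_\lambda(tu)$ for small $t>0$: Corollary \ref{cor3} bounds $\int_\Omega G(tDu)\,dz+\frac{\tilde c_2}{p}||tu||^p_p$ by a constant times $t^p$, hypothesis $H(f)(iii)$ bounds the reaction contribution by $\epsilon t^p||u||^p_p$ (for any prescribed $\epsilon>0$ and $t$ small), while hypothesis $H(\beta)(i)$ yields $\lambda\int_{\partial\Omega}B(z,tu)\,d\sigma\geq\lambda\frac{c_{12}}{q}t^q\int_{\partial\Omega}u^q\,d\sigma$. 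Since $q<p$, the negative $t^q$-term dominates, giving $\hat{\varphi}^+_\lambda(tu)<0$. The closed ball $\overline{B}_{\rho^+_\lambda}$ is weakly compact and $\hat{\varphi}^+_\lambda$ is sequentially weakly lower semicontinuous (convexity of $G$, compactness of the Sobolev embedding into $L^r(\Omega)$ and of the trace into $L^q(\partial\Omega)$), so the infimum is attained at some $u_0$. Because $\hat{\varphi}^+_\lambda(u_0)<0<\hat{m}^+_\lambda$, necessarily $||u_0||<\rho^+_\lambda$, so $u_0$ is a local minimizer of $\hat{\varphi}^+_\lambda$ in $W^{1,p}(\Omega)$.

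\textbf{Step 2 (Regularity, positivity, and $u_0\in D_+$).} From $(\hat{\varphi}^+_\lambda)'(u_0)=0$, testing with $h=-u_0^-$, using (\ref{eq20})--(\ref{eq21}) to annihilate the right-hand side and Lemma \ref{lem2}(c) on the $a$-term, I obtain $\frac{c_1}{p-1}||Du_0^-||^p_p+\tilde c_2||u_0^-||^p_p\leq 0$, hence $u_0\geq 0$, so $u_0$ solves \eqref{eqP}. Nonlinear regularity (Lieberman \cite{21}) then gives $u_0\in C^{1,\alpha}(\overline{\Omega})$. The nonlinear strong maximum principle of Pucci--Serrin \cite{33} yields $u_0>0$ in $\Omega$, and since $\beta(z,0)=0$, Hopf's lemma applied at any potential boundary zero, combined with the nonlinear boundary condition, rules out $u_0(z_0)=0$ at any $z_0\in\partial\Omega$; therefore $u_0\in D_+$.

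\textbf{Step 3 (A second positive solution via mountain pass).} We may assume $K_{\hat{\varphi}^+_\lambda}$ is finite (otherwise the multiplicity conclusion is immediate), so $u_0$ is a strict local minimizer. By Proposition \ref{prop11}, choose $u_1\in D_+$ with $\hat{\varphi}^+_\lambda(u_1)<\hat{\varphi}^+_\lambda(u_0)$ and $||u_1-u_0||$ larger than some small $\rho>0$ for which $\inf_{||u-u_0||=\rho}\hat{\varphi}^+_\lambda>\hat{\varphi}^+_\lambda(u_0)$. Proposition \ref{prop8} supplies the C-condition, so Theorem \ref{th1} produces a critical point $\hat{u}$ of $\hat{\varphi}^+_\lambda$ with $\hat{\varphi}^+_\lambda(\hat{u})\geq\inf_{||u-u_0||=\rho}\hat{\varphi}^+_\lambda>\hat{\varphi}^+_\lambda(u_0)$, so $\hat{u}\neq u_0$. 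A repetition of the argument of Step 2 delivers $\hat{u}\in D_+$. Applying the whole scheme to $\hat{\varphi}^-_\lambda$ on $(0,\lambda_-)$ produces $v_0,\hat{v}\in -D_+$, establishing (b); (c) then follows trivially since positive and negative solutions are distinct. The most delicate point of the whole argument is treating $u_0$ as a \emph{strict} local minimizer so that the classical mountain pass theorem directly applies, which I handle via the standard dichotomy on the cardinality of $K_{\hat{\varphi}^+_\lambda}$; a secondary subtlety is lifting the interior positivity of $u_0$ (from the nonlinear maximum principle) to strict positivity on $\overline{\Omega}$, which hinges on Hopf's lemma interacting with the nonlinear boundary condition $\partial u_0/\partial n_a=\lambda\beta(z,u_0)$ and the sign information $\beta(z,u_0)\geq 0$ granted by $H(\beta)(i)$.
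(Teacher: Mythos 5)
Your Steps 1--2 follow the paper's scheme, but Step 3 takes a different route (mountain pass centered at the local minimizer $u_0$) and this is where there is a genuine gap. The critical value you obtain satisfies $c\geq\inf\{\hat{\varphi}^+_{\lambda}(u):||u-u_0||=\rho\}>\hat{\varphi}^+_{\lambda}(u_0)$, which rules out $\hat{u}=u_0$ but not $\hat{u}=0$: for small $\rho$ the ring infimum is close to $\hat{\varphi}^+_{\lambda}(u_0)<0$, so nothing prevents $c=0$, and at level $0$ the only critical point of $\hat{\varphi}^+_{\lambda}$ may well be the origin; your ``repetition of Step 2'' tacitly assumes $\hat{u}\neq 0$, which you never establish. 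In addition, the inequality $\inf_{||u-u_0||=\rho}\hat{\varphi}^+_{\lambda}>\hat{\varphi}^+_{\lambda}(u_0)$ does not follow merely from $u_0$ being an isolated critical point: in infinite dimensions an isolated (hence strict) local minimizer need not be surrounded by a sphere of strictly higher values, and one needs the C-condition-based result the paper invokes elsewhere (see (\ref{eq102}) and the reference to Aizicovici--Papageorgiou--Staicu). The paper sidesteps both issues by running the mountain pass from the origin rather than from $u_0$: Proposition \ref{prop10} already gives $\inf_{||u||=\rho^+_{\lambda}}\hat{\varphi}^+_{\lambda}=\hat{m}^+_{\lambda}>0=\hat{\varphi}^+_{\lambda}(0)$, Proposition \ref{prop11} supplies a far point of negative energy, and then $\hat{\varphi}^+_{\lambda}(\hat{u})\geq\hat{m}^+_{\lambda}>0>\hat{\varphi}^+_{\lambda}(u_0)$ separates $\hat{u}$ from both $0$ and $u_0$ by energy level alone, with no finiteness dichotomy and no strict-minimizer lemma needed. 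Your argument can be repaired simply by adopting this geometry, which you already have at your disposal.

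A secondary point, in Step 1: the justification of $\hat{\varphi}^+_{\lambda}(tu)<0$ is wrong as stated. Corollary \ref{cor3} gives $G(y)\leq c_5(1+|y|^p)$, whose additive constant does not vanish as $t\rightarrow 0^+$, and even discarding it, $G$ need not be $O(|y|^p)$ near the origin (for $a(y)=|y|^{p-2}y+|y|^{\tau-2}y$ the primitive behaves like $|y|^{\tau}$ there). The correct small-argument bound is $G(y)\leq c_{31}|y|^{\tau}$ for $|y|\leq\delta$, which comes from hypothesis $H(a)(iv)$ (this is (\ref{eq45}) in the paper), so the competition is between $t^{\tau}$ and the boundary term $t^{q}$, and what is really needed is $q<\tau$ --- precisely the requirement in $H(\beta)(i)$ --- not merely $q<p$. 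With this correction (the bounds are applied pointwise, so one works with $u$ of small $C^1$-norm, or with $tu$ for $u\in D_+$ and $t$ small) the step is fine, and your Step 2 matches the paper, although the applicability of the Pucci--Serrin maximum principle is not automatic and the paper verifies its integral condition explicitly via $H(t)=a_0(t)t^2-G_0(t)$ and $H(a)(iv)$.
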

\begin{proof}
	\textit{(a)} Let $\lambda\in(0,\lambda_+)$ and let $\rho^+_{\lambda}$ be as postulated by Proposition \ref{prop10}. We consider the set
	$$\bar{B}_{\rho^+_{\lambda}}=\{u\in W^{1,p}(\Omega):||u||\leq\rho^+_{\lambda}\}.$$
	
	This set is weakly compact in $W^{1,p}(\Omega)$. Moreover, using the Sobolev embedding theorem and the compactness of the trace map, we see that $\hat{\varphi}^+_{\lambda}$ is sequentially weakly lower semicontinuous. So, by the Weierstrass theorem, we can find $u_0\in W^{1,p}(\Omega)$ such that
	\begin{equation}\label{eq44}
		\hat{\varphi}^+_{\lambda}(u_0)=\inf[\hat{\varphi}^+_{\lambda}(u):u\in\bar{B}_{\rho^+_{\lambda}}(u_0)].
	\end{equation}
	
	Hypotheses $H(a)(iv),H(f)(iii)$ imply that we can find $\delta\in(0,1)$ such that
	\begin{eqnarray}
		&&G(y)\leq c_{31}|y|^{\tau}\ \mbox{for all}\ |y|\leq\delta\ \mbox{with}\ c_{31}>0,\label{eq45}\\
		&&|F(z,x)|\leq|x|^p\ \mbox{for almost all}\ z\in\Omega\ \mbox{and all}\ |x|\leq\delta\,.\label{eq46}
	\end{eqnarray}
	
	Then for $u\in C_+\backslash\{0\}$ with $||u||_{C^1(\overline{\Omega})}\leq\delta$, we have
	\begin{eqnarray}\label{eq47}
		&&\hat{\varphi}^+_{\lambda}(u)\leq c_{32}\delta^{\tau}-\delta^p|\Omega|_N-\frac{\lambda c_{12}}{q}\delta^q|\Omega|_N\ \mbox{for some}\ c_{32}>0\\
		&&(\mbox{see (\ref{eq45}), (\ref{eq46}) and hypothesis}\ H(\beta)(i)).\nonumber
	\end{eqnarray}
	
	Since $q<\tau<p$, by taking $\delta\in(0,1)$ even smaller if necessary, we infer from (\ref{eq47}) that
	\begin{equation}\label{eq48}
		\hat{\varphi}^+_{\lambda}(u)<0\ \mbox{and}\ ||u||\leq\rho^+_{\lambda}.
	\end{equation}
	
	It follows from (\ref{eq44}) and (\ref{eq48}) that
	\begin{eqnarray}
		&&\hat{\varphi}^+_{\lambda}(u_0)<0=\hat{\varphi}^+_{\lambda}(0),\label{eq49}\\
		&\Rightarrow&u_0\neq 0\ \mbox{and}\ ||u_0||<\rho^+_{\lambda}\ (\mbox{see Proposition \ref{prop10}}).\label{eq50}
	\end{eqnarray}
	
	From (\ref{eq44}) and (\ref{eq50}) we have
	\begin{eqnarray}\label{eq51}
		&&(\hat{\varphi}^+_{\lambda})'(u_0)=0\nonumber\\
		&\Rightarrow&\left\langle A(u_0),h\right\rangle+\int_{\Omega}c^*_2|u_0|^{p-2}u_0hdz=\int_{\Omega}\hat{f}_+(z,u_0)hdz+\lambda\int_{\partial\Omega}\beta(z,u^+_0)hd\sigma\\
		&&\mbox{for all}\ h\in W^{1,p}(\Omega).\nonumber
	\end{eqnarray}
	
	In (\ref{eq51}) we choose $h=-u^-_0\in W^{1,p}(\Omega)$. Then using Lemma \ref{lem2} and (\ref{eq20}), (\ref{eq21}), we obtain
	\begin{eqnarray*}
		&&\frac{c_1}{p-1}||Du^-_0||^p_p+c^*_2||u^-||^p_p\leq 0,\\
		&\Rightarrow&u_0\geq 0,u_0\neq 0.
	\end{eqnarray*}
	
	Hence equation (\ref{eq51}) becomes
	\begin{eqnarray}\label{eq52}
		&&\left\langle A(u_0),h\right\rangle=\int_{\Omega}f(z,u_0)hdz+\lambda\int_{\partial\Omega}\beta(z,u_0)hd\sigma\ \mbox{for all}\ h\in W^{1,p}(\Omega),\nonumber\\
		&\Rightarrow&-{\rm div}\, a(Du_0(z))=f(z,u_0(z))\ \mbox{for almost all}\ z\in\Omega,\ \frac{\partial u}{\partial n_a}=\lambda\beta(z,u_0)\ \mbox{on}\ \partial\Omega\\
		&&(\mbox{see Papageorgiou and R\u adulescu \cite{28}}).\nonumber
	\end{eqnarray}
	
	From Hu and Papageorgiou \cite{19} and Papageorgiou and R\u adulescu \cite{30}, we have
	$$u_0\in L^{\infty}(\Omega).$$
	
	Then invoking the nonlinear regularity theory of Lieberman \cite[p. 320]{21}, we can infer that
	$$u_0\in C_+\backslash\{0\}.$$
	
	Hypotheses $H(f)(i),(iii)$ imply that given $\rho>0$, we can find $\hat{\xi}_{\rho}>0$ such that
	$$f(z,x)x+\hat{\xi}_{\rho}|x|^p\geq 0\ \mbox{for almost all}\ z\in\Omega\ \mbox{and all}\ |x|\leq \rho\,.$$
	
	If $\rho=||u_0||_{\infty}$, from (\ref{eq52}) we have
	\begin{equation}\label{eq53}
		{\rm div}\, a(Du_0(z))\leq\hat{\xi}_{\rho}u_0(z)^{p-1}\ \mbox{for almost all}\ z\in\Omega\,.
	\end{equation}
	
	Let $\mu(t)=a_0(t)t,\ t>0$. Then
	$$\mu'(t)t=a'_0(t)t^2+a_0(t)t.$$
	
	Performing integration by parts, we obtain
	\begin{eqnarray*}
		\int^t_0\mu'(s)sds&=&\mu(t)t-\int^t_0\mu(s)ds\\
		&=&a_0(t)t^2-G_0(t)\\
		&\geq&c_4t^p\ (\mbox{see hypothesis}\ H(a)(iv)).
	\end{eqnarray*}
	
	We set $H(t)=a_0(t)t^2-G_0(t),H_0(t)=c_4t^p$ for all $t\geq 0$. Let $\delta\in(0,1)$ and $s>0$. We introduce the sets
	$$C_1=\{t\in(0,1):H(t)\geq s\}\ \mbox{and}\ C_2=\{t\in(0,1):H_0(t)\geq s\}.$$
	
	Then $C_2\subseteq C_1$ and so $\inf C_1\leq \inf C_2$. Hence
	\begin{eqnarray*}
		&&H^{-1}(s)\leq H^{-1}_0(s)\ (\mbox{see Gasinski and Papageorgiou \cite[Proposition 1.55]{16}})\\
		&\Rightarrow&\int^{\delta}_0\frac{1}{H^{-1}(\frac{\hat{\xi}}{p}s^p)}ds
\geq\int^{\delta}_0\frac{1}{H^{-1}_0(\frac{\hat{\xi}}{p}s^p)}ds=
\frac{\hat{\xi}_{\rho}}{p}\int^{\delta}_0\frac{ds}{s}=+\infty\,.
	\end{eqnarray*}
	
	Because of (\ref{eq53}) we can apply the nonlinear strong maximum principle of Pucci and Serrin \cite[p. 111]{33}, from which we obtain
	$$0<u(z)\ \mbox{for all}\ z\in\Omega.$$
	
	Then the boundary point theorem of Pucci and Serrin \cite[p. 120]{33}, implies that
	$$u_0\in D_+.$$
	
	Next, note that Propositions \ref{prop8}, \ref{prop10} and \ref{prop11} permit the use of Theorem \ref{th1} (the mountain pass theorem) on the functional $\hat{\varphi}^+_{\lambda}$ $(\lambda\in(0,\lambda_+))$. So, we can find $\hat{u}\in W^{1,p}(\Omega)$ such that
	\begin{equation}\label{eq54}
		\hat{u}\in K_{\hat{\varphi}^+_{\lambda}}\ \mbox{and}\ \hat{m}^+_{\lambda}\leq \hat{\varphi}^+_{\lambda}(\hat{u}).
	\end{equation}
	
It follows	from (\ref{eq54})  that
	$$\hat{u}\notin \{0,u_0\}\ (\mbox{see Proposition \ref{prop10} and (\ref{eq49})}).$$
	
	As before we can easily check that
	$$K_{\hat{\varphi}^+_{\lambda}}\backslash\{0\}\subseteq D_+
		\Rightarrow\hat{u}\in D_+\ (\mbox{see (\ref{eq54})}).
	$$
	
	\textit{(b)} Similarly, working this time with the functional $\hat{\varphi}^-_{\lambda}\ (\lambda\in(0,\lambda_-))$, we produce two negative solutions
	$$v_0,\hat{v}\in-D_+.$$
	
	\textit{(c)} This part follows from \textit{(a)} and \textit{(b)} above.
\end{proof}

In fact, we can show the existence of extremal constant sign solutions, that is, we will show the following:
\begin{itemize}
	\item for every $\lambda\in(0,\lambda_+)$, problem \eqref{eqP} has a smallest positive solution $u^*_{\lambda}\in D_+$;
	\item for every $\lambda\in(0,\lambda_-)$, problem \eqref{eqP} has a biggest negative solution $v^*_{\lambda}\in-D_+$.
\end{itemize}

To this end, note that hypotheses $H(f)$ imply that
\begin{equation}\label{eq55}
	f(z,x)x\geq-c_{33}|x|^p\ \mbox{for almost all}\ z\in\Omega\ \mbox{and all}\ x\in\RR,\ \mbox{with}\ c_{33}>0.
\end{equation}

This unilateral growth estimate on the reaction term $f(z,\cdot)$ and hypothesis $H(\beta)(i)$, lead to the following auxiliary nonlinear boundary value problem:
\begin{equation}
	\left\{\begin{array}{ll}
		-{\rm div}\, a(Du(z))+c_{33}|u(z)|^{p-2}u(z)=0&\mbox{in}\ \Omega,\\
		\frac{\partial u}{\partial n_a}=\lambda c_{12}|u|^{q-2}u&\mbox{on}\ \partial\Omega
	\end{array}\right\}\tag{$Au_{\lambda}$}\label{eqA}
\end{equation}
\begin{prop}\label{prop13}
	If hypotheses $H(a)$ hold and $\lambda>0$ then problem \eqref{eqA} has a unique positive solution $\bar{u}_{\lambda}\in D_+$ and a unique negative solution $\bar{v}_{\lambda}\in-D_+$.
\end{prop}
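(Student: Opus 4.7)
For the positive solution I would work with the energy functional
\[
\psi_\lambda^+(u)=\int_\Omega G(Du) dz+\frac{c_{33}}{p}\|u\|_p^p-\frac{\lambda c_{12}}{q}\int_{\partial\Omega}(u^+)^q d\sigma,\quad u\in W^{1,p}(\Omega).
\]
Corollary \ref{cor3} gives $G(y)\ge\frac{c_1}{p(p-1)}|y|^p$, and combining this with the continuous trace embedding of $W^{1,p}(\Omega)$ into $L^q(\partial\Omega)$ (recall $q<p$) produces a bound of the form $\psi_\lambda^+(u)\ge c_{34}\|u\|^p-c_{35}\|u\|^q$. Since $q<p$ this shows $\psi_\lambda^+$ is coercive; sequential weak lower semicontinuity follows from the convexity of $G$ together with the compactness of the trace map. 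Hence the direct method furnishes a global minimizer $\bar u_\lambda\in W^{1,p}(\Omega)$.

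To rule out $\bar u_\lambda=0$, I would evaluate $\psi_\lambda^+$ along $t\hat u$ with $\hat u\in D_+$ fixed and $t\downarrow 0^+$. Hypothesis $H(a)(iv)$ forces $G_0(t)\le(\tilde c+1)t^\tau$ near $0$ and the $c_{33}$-term is of order $t^p$, while $\hat u|_{\partial\Omega}>0$ gives a genuinely negative boundary contribution of order $t^q$, so $\psi_\lambda^+(t\hat u)\le c_{36}t^\tau+c_{37}t^p-c_{38}t^q$. Since $q<\tau<p$, the last term dominates for small $t$, whence $\psi_\lambda^+(\bar u_\lambda)<0=\psi_\lambda^+(0)$ and $\bar u_\lambda\ne 0$. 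The Euler equation $(\psi_\lambda^+)'(\bar u_\lambda)=0$ tested against $-\bar u_\lambda^-$ immediately yields $\bar u_\lambda\ge 0$ (via Lemma \ref{lem2}(c)); the nonlinear regularity of Hu--Papageorgiou \cite{19} and Lieberman \cite{21} places $\bar u_\lambda\in C_+\setminus\{0\}$, and the nonlinear strong maximum principle together with the boundary point theorem of Pucci--Serrin (applied exactly as in the proof of Proposition \ref{prop12}) upgrade this to $\bar u_\lambda\in D_+$.

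The uniqueness statement is the main obstacle and is where hypothesis $H(a)(iv)$ is used essentially. Let $\bar u_1,\bar u_2\in D_+$ be two positive solutions of \eqref{eqA}; both are bounded and bounded below by positive constants on $\overline\Omega$, so $h_i=(\bar u_1^\tau-\bar u_2^\tau)/\bar u_i^{\tau-1}$ is a legitimate element of $W^{1,p}(\Omega)$ for $i=1,2$. Testing the weak formulation of \eqref{eqA} at $\bar u_i$ against $h_i$ and subtracting the resulting identities gives
\[
\bigl[\langle A(\bar u_1),h_1\rangle-\langle A(\bar u_2),h_2\rangle\bigr]
+c_{33}\int_\Omega\bigl(\bar u_1^{p-\tau}-\bar u_2^{p-\tau}\bigr)\bigl(\bar u_1^\tau-\bar u_2^\tau\bigr) dz
=\lambda c_{12}\int_{\partial\Omega}\bigl(\bar u_1^{q-\tau}-\bar u_2^{q-\tau}\bigr)\bigl(\bar u_1^\tau-\bar u_2^\tau\bigr) d\sigma.
\]
The convexity of $t\mapsto G_0(t^{1/\tau})$ provided by $H(a)(iv)$ is precisely the ingredient needed to establish a Díaz--Saá type inequality for the operator $A$, yielding that the first bracket is nonnegative; $p>\tau$ makes the $c_{33}$-integral nonnegative; $q<\tau$ makes the boundary integral nonpositive. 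The only way $(\ge 0)+(\ge 0)=(\le 0)$ can hold is that each of the three terms vanishes, whence $\bar u_1^\tau\equiv\bar u_2^\tau$ pointwise, and so $\bar u_1=\bar u_2$.

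Existence and uniqueness of the negative solution $\bar v_\lambda\in-D_+$ follow by applying the same scheme to the functional
\[
\psi_\lambda^-(u)=\int_\Omega G(Du) dz+\frac{c_{33}}{p}\|u\|_p^p-\frac{\lambda c_{12}}{q}\int_{\partial\Omega}(u^-)^q d\sigma,
\]
with the Díaz--Saá comparison carried out on $-\bar v_\lambda\in D_+$. The technical heart of the argument is verifying the Díaz--Saá inequality for the nonhomogeneous operator $-{\rm div}\,a(Du)$, which rests entirely on the convexity property built into $H(a)(iv)$.
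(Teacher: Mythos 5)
Your proposal is correct and takes essentially the same route as the paper: the direct method on a truncated--perturbed coercive functional, followed by nonlinear regularity and the strong maximum principle, yields the positive solution in $D_+$, and uniqueness is obtained by the Diaz--Saa convexity argument whose key ingredient is the convexity of $t\mapsto G_0(t^{1/\tau})$ from hypothesis $H(a)(iv)$. The only cosmetic differences are that you handle the concave boundary term as a separate sign-definite integral (via the monotonicity of $x\mapsto x^{q-\tau}$ and $x\mapsto x^{p-\tau}$) instead of absorbing it into the convex functional $j$ as the paper does, and that you rerun the scheme for the negative solution where the paper simply invokes the oddness of problem \eqref{eqA}.
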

\begin{proof}
	First, we establish the existence of a positive solution.
	
	So we consider the $C^1$-functional $\psi^+_{\lambda}:W^{1,p}(\Omega)\rightarrow\RR$ defined by
	\begin{eqnarray*}
		&&\psi^+_{\lambda}(u)=\int_{\Omega}G(Du)dz+\frac{1}{p}||u||^p_p+(c_{33}-1)||u^+||^p_p-\frac{\lambda c_{12}}{q}\int_{\partial\Omega}(u^+)^qd\sigma\\
		&&\mbox{for all}\ u\in W^{1,p}(\Omega).
	\end{eqnarray*}
	
	Evidently, we can always assume that $c_{33}>1$ (see (\ref{eq55})). Then we have
	\begin{eqnarray*}
		 &&\psi^+_{\lambda}(u)\geq\frac{c_1}{p(p-1)}||Du^+||^p_p+c_{34}||u^+||^p_p+\frac{c_1}{p(p-1)}||Du^-||^p_p+\frac{1}{p}||u^-||^p_p-\lambda c_{35}||u||^q\\
		&&\mbox{for some}\ c_{34},c_{35}>0\ \mbox{(see Corollary \ref{cor3})},\\
		&&\geq c_{36}||u||^p-\lambda c_{35}||u||^q\ \mbox{for some}\ c_{36}>0,\\
		&\Rightarrow&\psi^+_{\lambda}\ \mbox{is coercive}\ (\mbox{recall that}\ q<p).
	\end{eqnarray*}
	
	Moreover, the Sobolev embedding theorem and the compactness of the trace map, imply that $\psi^+_{\lambda}$ is sequentially weakly lower semicontinuous. So, we can find $\bar{u}_{\lambda}\in W^{1,p}(\Omega)$ such that
	\begin{equation}\label{eq56}
		\psi^+_{\lambda}(\bar{u}_{\lambda})=\inf[\psi^+_{\lambda}(u):u\in W^{1,p}(\Omega)].
	\end{equation}
	
	As in the proof of Proposition \ref{prop12}, exploiting the fact that $q<\tau<p$, we show that
	\begin{eqnarray*}
		&&\psi^+_{\lambda}(\bar{u}_{\lambda})<0=\psi^+_{\lambda}(0),\\
		&\Rightarrow&\bar{u}_{\lambda}\neq 0.
	\end{eqnarray*}
	
	From (\ref{eq56}) we have
	\begin{eqnarray}\label{eq57}
		&&(\psi^+_{\lambda})'(\bar{u}_{\lambda})=0,\nonumber\\
		&\Rightarrow&\left\langle A(\bar{u}_{\lambda}),h\right\rangle+\int_{\Omega}|\bar{u}_{\lambda}|^{p-2}\bar{u}_{\lambda}hdz+(c_{33}-1)\int_{\Omega}(\bar{u}^+_{\lambda})^{p-1}hdz=\lambda\int_{\partial\Omega}c_{12}(\bar{u}^+_{\lambda})^{q-1}hd\sigma\\
		&&\mbox{for all}\ h\in W^{1,p}(\Omega).\nonumber
	\end{eqnarray}
	
	In (\ref{eq57}) we choose $h=-\bar{u}^-_{\lambda}\in W^{1,p}(\Omega)$. Then
	\begin{eqnarray*}
		&&\frac{c_1}{p-1}||D\bar{u}^-_{\lambda}||^p_p+||\bar{u}^-_{\lambda}||^p_p\leq 0\ (\mbox{see Lemma \ref{lem2}}),\\
		&\Rightarrow&\bar{u}_{\lambda}\geq 0,\bar{u}_{\lambda}\neq 0.
	\end{eqnarray*}
	
	Then equation (\ref{eq57}) becomes
	\begin{eqnarray*}
		&&\left\langle A(\bar{u}_{\lambda}),h\right\rangle+\int_{\Omega}c_{33}\bar{u}^{p-1}_{\lambda}hdz-\lambda\int_{\partial\Omega}c_{12}\bar{u}^{q-1}_{\lambda}hd\sigma=0\ \mbox{for all}\ h\in W^{1,p}(\Omega),\\
		&\Rightarrow&-{\rm div}\, a(D\bar{u}_{\lambda}(z))+c_{33}\bar{u}_{\lambda}(z)^{p-1}=0\ \mbox{for almost all}\ z\in\Omega,\\
		&&\frac{\partial\bar{u}_{\lambda}}{\partial n_a}=\lambda c_{12}\bar{u}^{q-1}_{\lambda}\ \mbox{on}\ \partial\Omega.
	\end{eqnarray*}
	
	As before, the nonlinear regularity theory (see Lieberman \cite{21}) and the nonlinear maximum principle (see \cite{33}), imply that
	$$\bar{u}_{\lambda}\in D_+.$$
	
	Next, we show the uniqueness of this positive solution. To this end, we introduce the integral functional $j:L^1(\Omega)\rightarrow\bar{\RR}=\RR\cup\{+\infty\}$ defined  by
	\begin{eqnarray}\label{eq58}
		&&j(u)=\left\{\begin{array}{ll}
			\int_{\Omega}G(Du^{1/\tau})dz-\frac{c_{12}\tau}{q}\int_{\partial\Omega}u^{q/\tau}d\sigma&\mbox{if}\ u\geq 0,u^{1/\tau}\in W^{1,p}(\Omega)\\
			+\infty&\mbox{otherwise}.
		\end{array}\right.
	\end{eqnarray}
	
	Let ${\rm dom}\, j=\{u\in L^1(\Omega):j(u)<+\infty\}$ (the effective domain of $j$) and let $u_1,u_2\in {\rm dom}\, j$. We set
	$$u=((1-t)u_1+tu_2)^{1/\tau}\ \mbox{for}\ t\in[0,1].$$
	
	From Lemma 1 of Diaz and Saa \cite{9}, we have
	$$|Du(z)|\leq\left[(1-t)|Du_1(z)^{1/\tau}|^{\tau}+t|Du_2(z)^{1/\tau}|^{\tau}\right]^{1/\tau}\ \mbox{for almost all}\ z\in\Omega.$$
	
	Then
	\begin{eqnarray*}
		G_0(|Du(z)|)&\leq&G_0\left([(1-t)|Du_1(z)^{1/\tau}|^{\tau}+t|Du_2(z)^{1/\tau}|^{\tau}]^{1/\tau}\right)\\
		&&(\mbox{recall that}\ G_0(\cdot)\ \mbox{is increasing})\\
		&\leq&(1-t)G_0(|Du_1(z)^{1/\tau}|)+tG_0(|Du_2(z)^{1/\tau}|)\ (\mbox{see hypothesis}\ H(a)(iv))\\
		\Rightarrow G(Du(z))&\leq&(1-t)G(Du_1(z)^{1/\tau})+tG(Du_2(z)^{1/\tau})\ \mbox{for almost all}\ z\in\Omega.
	\end{eqnarray*}
	
	Also, recall that $q<\tau$ and so $x\mapsto -x^{q/\tau}$ is convex on $\left[0,+\infty\right)$. Therefore it follows that $j(\cdot)$ is convex. Moreover, Fatou's lemma implies that $j(\cdot)$ is lower semicontinuous.
	
	Now suppose that $\bar{w}_{\lambda}\in W^{1,p}(\Omega)$ is another positive solution of problem \eqref{eqA}. As above we can show that
	$$\bar{w}_{\lambda}\in D_+.$$
	
	Then for all $h\in C^1(\overline{\Omega})$ and for small enogh $|t|\leq 1$, we have
	$$\bar{u}^{\tau}_{\lambda}+th,\bar{w}^{\tau}_{\lambda}+th\in {\rm dom}\,j\ (\mbox{see (\ref{eq58})}).$$
	
	We can easily see that $j(\cdot)$ is G\^ateaux differentiable at $\bar{u}^{\tau}_{\lambda},\bar{w}^{\tau}_{\lambda}$ in the direction $h$. Moreover, via the chain rule and the nonlinear Green's identity (see Gasinski and Papageorgiou \cite[p. 210]{13}), we have
	\begin{eqnarray*}
		&&j'(\bar{u}^{\tau}_{\lambda})(h)=\frac{1}{\tau}\int_{\Omega}\frac{-{\rm div}\, a(D\bar{u}_{\lambda})}{\bar{u}^{\tau-1}_{\lambda}}hdz\\
		&&j'(\bar{w}^{\tau}_{\lambda})(h)=\frac{1}{\tau}\int_{\Omega}\frac{-{\rm div}\, a(D\bar{w}_{\lambda})}{\bar{w}^{\tau-1}_{\lambda}}hdz\ \mbox{for all}\ h\in C^1(\overline{\Omega}).
	\end{eqnarray*}
	
	The convexity of $j(\cdot)$ implies that $j'(\cdot)$ is monotone. Hence
	\begin{eqnarray}\label{eq59}
		0&\leq&\int_{\Omega}\left[\frac{-{\rm div}\, a(D\bar{u}_{\lambda})}{\bar{u}^{\tau-1}_{\lambda}}-\frac{-{\rm div}\, a(D\bar{w}_{\lambda})}{\bar{w}^{\tau-1}_{\lambda}}\right](\bar{u}^{\tau}_{\lambda}-\bar{w}^{\tau}_{\lambda})dz\nonumber\\
		 &=&\int_{\Omega}c_{33}[\bar{w}^{p-\tau}_{\lambda}-\bar{u}^{p-\tau}_{\lambda}](\bar{u}^{\tau}_{\lambda}-\bar{w}^{\tau}_{\lambda})dz.
	\end{eqnarray}
	
	Since $x\mapsto x^{p-\tau}$ is strictly increasing on $\left[0,+\infty\right)$ from (\ref{eq59}) it follows that
	$$\bar{u}_{\lambda}=\bar{w}_{\lambda}.$$
	
	This proves the uniqueness of the positive solution $\bar{u}_{\lambda}\in D_+$ of problem \eqref{eqA}.
	
	The fact that problem \eqref{eqA} is odd, implies that $\bar{v}_{\lambda}=-\bar{u}_{\lambda}\in-D_+$ is the unique negative solution.
\end{proof}

In what follows, for every $\lambda>0$, let $S_+(\lambda)$ (respectively, $S_-(\lambda)$) be the set of positive (respectively, negative) solutions of problem \eqref{eqP}. From Proposition \ref{prop12} and its proof, we know that:
\begin{itemize}
	\item If $\lambda\in(0,\lambda_+)$, then $S_+(\lambda)\neq\emptyset$ and $S_+(\lambda)\subseteq D_+$.
	\item If $\lambda\in(0,\lambda_-)$, then $S_-(\lambda)\neq\emptyset$ and $S_-(\lambda)\subseteq -D_+$.
\end{itemize}

We will use the unique constant sign solutions $\bar{u}_{\lambda}\in D_+$ (respectively, $\bar{v}_{\lambda}\in -D_+$) of the auxiliary problem \eqref{eqA} produced in Proposition \ref{prop13}, to provide a lower bound (respectively, upper bound) for the elements of $S_+(\lambda)$ (respectively, $S_-(\lambda)$).
\begin{prop}\label{prop14}
	If hypotheses $H(a),H(f),H(\beta)$ hold, then
	\begin{itemize}
		\item[(a)] for all $\lambda\in(0,\lambda_+)$ and all $u\in S_+(\lambda)$, we have $\bar{u}_{\lambda}\leq u$;
		\item[(b)] for all $\lambda\in(0,\lambda_-)$ and all $v\in S_-(\lambda)$, we have $v\leq\bar{v}_{\lambda}$.
	\end{itemize}
\end{prop}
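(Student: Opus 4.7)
Our plan is to prove (a) by the strategy sketched below; (b) then follows by identical reasoning on the negative side with symmetric truncations. Fix $u\in S_+(\lambda)\subseteq D_+$. We aim to recover $\bar u_\lambda$ as the global minimizer of a truncated and penalized functional whose construction forces the minimizer to lie in $[0,u]$ pointwise; on that range the functional coincides with $\psi^+_\lambda$ from the proof of Proposition \ref{prop13} (up to inactive penalties), so the uniqueness in that proposition will identify the minimizer with $\bar u_\lambda$ and yield $\bar u_\lambda\le u$.

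Specifically, we would introduce the truncations
\[
\zeta(z,x)=c_{33}\bigl[\min(x^+,u(z))\bigr]^{p-1},\qquad \vartheta(z,x)=c_{12}\bigl[\min(x^+,u(z))\bigr]^{q-1},
\]
with primitives $Z(z,x)=\int_0^x\zeta(z,s)\,ds$ and $V(z,x)=\int_0^x\vartheta(z,s)\,ds$. For a fixed constant $M>0$ we set
\[
\psi_\lambda^u(v)=\int_\Omega G(Dv)\,dz+\int_\Omega Z(z,v)\,dz+\tfrac{1}{p}\|v^-\|_p^p+\tfrac{M}{p}\|(v-u)^+\|_p^p-\lambda\int_{\partial\Omega}V(z,v)\,d\sigma.
\]
Since $\zeta,\vartheta$ are pointwise bounded (by $c_{33}\|u\|_\infty^{p-1}$ and $c_{12}\|u\|_\infty^{q-1}$) and the elementary inequality $\|v\|_p^p\le C(\|v^-\|_p^p+\|(v-u)^+\|_p^p+1)$ holds, Corollary \ref{cor3} yields $\psi_\lambda^u(v)\ge c\|v\|^p-c'(1+\|v\|)$, so $\psi_\lambda^u$ is coercive; sequential weak lower semicontinuity is standard via the compact trace map and convexity of $G$. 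Hence $\psi_\lambda^u$ admits a global minimizer $\tilde u$, which we would show to be nonzero by exhibiting small $t>0$ and $w\in D_+$ with $w\le u$ for which $\psi_\lambda^u(tw)<0$, using $q<\tau<p$ exactly as in Proposition \ref{prop13}.

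The crucial step is to confine $\tilde u$ to $[0,u]$. The Euler--Lagrange equation reads
\[
\langle A(\tilde u),h\rangle+\int_\Omega\zeta(z,\tilde u)h\,dz-\int_\Omega(\tilde u^-)^{p-1}h\,dz+M\int_\Omega(\tilde u-u)_+^{p-1}h\,dz=\lambda\int_{\partial\Omega}\vartheta(z,\tilde u)h\,d\sigma.
\]
Testing with $h=-\tilde u^-$ and invoking Lemma \ref{lem2}(c) yields $\tilde u\ge 0$. For the upper bound, testing with $h=(\tilde u-u)^+$ and noting that on $\{\tilde u>u\}$ one has $\zeta(z,\tilde u)=c_{33}u^{p-1}$ and $\vartheta(z,\tilde u)=c_{12}u^{q-1}$, then subtracting the weak formulation of \eqref{eqP} for $u$ tested against the same function, we arrive after rearrangement at
\[
\langle A(\tilde u)-A(u),(\tilde u-u)^+\rangle+M\|(\tilde u-u)^+\|_p^p+\int_\Omega[c_{33}u^{p-1}+f(z,u)](\tilde u-u)^+dz=\lambda\int_{\partial\Omega}[c_{12}u^{q-1}-\beta(z,u)](\tilde u-u)^+d\sigma.
\]
The left-hand side is a sum of three nonnegative terms: monotonicity of $a$ handles the first, the penalty is trivially nonnegative, and the third integrand is $\ge 0$ by (\ref{eq55}). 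The right-hand side is nonpositive by $H(\beta)(i)$. Both sides therefore vanish; in particular $M\|(\tilde u-u)^+\|_p^p=0$, so $\tilde u\le u$.

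Once $0\le\tilde u\le u$ pointwise, the truncations and penalties are inactive, and $\tilde u$ is a positive solution of the auxiliary problem \eqref{eqA}. By the uniqueness statement of Proposition \ref{prop13}, $\tilde u=\bar u_\lambda$, whence $\bar u_\lambda\le u$, proving (a). We expect the main technical difficulty to be the sign bookkeeping in the displayed identity above: the constants $c_{33}$ in (\ref{eq55}) and $c_{12}$ in $H(\beta)(i)$ were engineered precisely so that the interior and boundary comparison terms sit on opposite sides of the identity, forcing the penalty term to vanish.
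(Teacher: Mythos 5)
Your argument is correct and is essentially the paper's own proof: truncate the auxiliary data at the fixed solution $u\in S_+(\lambda)$, minimize the resulting coercive, weakly lower semicontinuous functional, show the minimizer is nontrivial (using $q<\tau<p$ and $u\in D_+$), confine it to $[0,u]$ by testing with $-\tilde u^-$ and $(\tilde u-u)^+$ together with (\ref{eq55}) and $H(\beta)(i)$, and conclude $\tilde u=\bar u_\lambda$ from the uniqueness in Proposition \ref{prop13}. The only cosmetic difference is bookkeeping: you add an explicit penalty $\frac{M}{p}\|(v-u)^+\|_p^p$ and a $\frac1p\|v^-\|_p^p$ term, whereas the paper builds the same confinement into the shifted truncations $\hat k_+,\hat\beta_+$ plus the $\frac1p\|v\|_p^p$ term; both routes yield the same comparison inequality.
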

\begin{proof}
	\textit{(a)} Let $\lambda\in(0,\lambda_+)$ and $u\in S_+(\lambda)$. We introduce the following Carath\'eodory functions
	\begin{eqnarray}
		&&\hat{k}_+(z,x)=\left\{\begin{array}{ll}
			0&\mbox{if}\ x<0\\
			(-c_{33}+1)x^{p-1}&\mbox{if}\ 0\leq x\leq u(z)\\
			(-c_{33}+1)u(z)^{p-1}&\mbox{if}\ u(z)<x
		\end{array}\right.\label{eq60}\\
		&&\hat{\beta}_+(z,x)=\left\{\begin{array}{ll}
			0&\mbox{if}\ x<0\\
			c_{12}x^{q-1}&\mbox{if}\ 0\leq x\leq u(z)\\
			c_{12}u(z)^{q-1}&\mbox{if}\ u(z)<x
		\end{array}\right.\ \mbox{for all}\ (z,x)\in\partial\Omega\times\RR.\label{eq61}
	\end{eqnarray}
	
	We set $\hat{K}_+(z,x)=\int^x_0\hat{k}_+(z,s)ds$ and $\hat{B}_+(z,x)=\int^x_0\hat{\beta}_+(z,s)ds$ and consider the $C^1$-functional $\hat{\psi}^+_{\lambda}:W^{1,p}(\Omega)\rightarrow\RR$ defined by
	 $$\hat{\psi}^+_{\lambda}(u)=\int_{\Omega}G(Du)dz+\frac{1}{p}||u||^p_p-\int_{\Omega}\hat{K}_+(z,u)dz-\lambda\int_{\partial\Omega}\hat{B}_+(z,u)d\sigma\ \mbox{for all}\ u\in W^{1,p}(\Omega).$$
	
	From Corollary \ref{cor3} and (\ref{eq60}), (\ref{eq61}), we see that $\hat{\psi}^+_{\lambda}$ is coercive. Also, from the Sobolev embedding theorem and the compactness of the trace map, it follows that $\hat{\psi}^+_{\lambda}$ is sequentially weakly lower semicontinuous. So, we can find $\tilde{u}_{\lambda}\in W^{1,p}(\Omega)$ such that
	\begin{equation}\label{eq62}
		\hat{\psi}^+_{\lambda}(\tilde{u}_{\lambda})=\inf[\hat{\psi}^+_{\lambda}(u):u\in W^{1,p}(\Omega)].
	\end{equation}
	
	In fact, since $q<\tau<p$, as in the proof of Proposition \ref{prop12} (see (\ref{eq47}) with $\delta\leq\min\limits_{\overline{\Omega}}u$ and recall that $u\in D_+$), we have
$$\hat{\psi}^+_{\lambda}(\tilde{u}_{\lambda})<0=\hat{\psi}^+_{\lambda}(0)
		\Rightarrow\tilde{u}_{\lambda}\neq 0.
	$$
	
	From (\ref{eq62}) we have
	\begin{eqnarray}\label{eq63}
		&&(\hat{\psi}^+_{\lambda})'(\tilde{u}_{\lambda})=0,\nonumber\\
		&\Rightarrow&\left\langle A(\tilde{u}_{\lambda}),h\right\rangle+\int_{\Omega}|\tilde{u}_{\lambda}|^{p-2}\tilde{u}_{\lambda}hdz=\int_{\Omega}\hat{k}_+(z,\tilde{u}_{\lambda})hdz+\lambda\int_{\partial\Omega}\hat{\beta}_+(z,\tilde{u}_{\lambda})hd\sigma\\
		&&\mbox{for all}\ h\in W^{1,p}(\Omega).\nonumber
	\end{eqnarray}
	
	In (\ref{eq63}) we first choose $h=-\tilde{u}_{\lambda}^-\in W^{1,p}(\Omega)$. Then
	\begin{eqnarray*}
		&&\frac{c_1}{p-1}||D\tilde{u}^-_{\lambda}||^p_p+||\tilde{u}^-_{\lambda}||^p_p\leq 0\ \mbox{(see Corollary \ref{cor3} and (\ref{eq60}), (\ref{eq61}))},\\
		&\Rightarrow&\tilde{u}_{\lambda}\geq 0,\tilde{u}_{\lambda}\neq 0.
	\end{eqnarray*}
	
	Next, in (\ref{eq63}) we choose $h=(\tilde{u}_{\lambda}-u)^+\in W^{1,p}(\Omega)$. Then
	\begin{eqnarray*}
		&&\left\langle A(\tilde{u}_{\lambda}),(\tilde{u}_{\lambda}-u)^+\right\rangle+\int_{\Omega}\tilde{u}^{p-1}_{\lambda}(\tilde{u}_{\lambda}-u)^+dz\\
		 &&=\int_{\Omega}(-c_{33}+1)u^{p-1}(\tilde{u}_{\lambda}-u)^+dz+\lambda\int_{\partial\Omega}c_{12}u^{q-1}(\tilde{u}_{\lambda}-u)^+d\sigma\ (\mbox{see (\ref{eq60}), (\ref{eq61})})\\
		 &&\leq\int_{\Omega}f(z,u)(\tilde{u}_{\lambda}-u)^+dz+\int_{\Omega}u^{p-1}(\tilde{u}_{\lambda}-u)^+dz+\lambda\int_{\partial\Omega}\beta(z,u)(\tilde{u}_{\lambda}-u)^+d\sigma\\
		&&(\mbox{see (\ref{eq55}) and hypothesis}\ H(\beta)(i))\\
		&&=\left\langle A(u),(\tilde{u}_{\lambda}-u)^+\right\rangle+\int_{\Omega}u^{p-1}(\tilde{u}_{\lambda}-u)^+dz\ (\mbox{since}\ u\in S_+(\lambda)),\\
		&\Rightarrow&\left\langle A(\tilde{u}_{\lambda})-A(u),(\tilde{u}_{\lambda}-u)^+\right\rangle+\int_{\Omega}(\tilde{u}_{\lambda}^{p-1}-u^{p-1})(\tilde{u}_{\lambda}-u)^+dz\leq 0,\\
		&\Rightarrow&\tilde{u}_{\lambda}\leq u.
	\end{eqnarray*}
	
	So, we have proved that
	$$\tilde{u}_{\lambda}\in[0,u]=\{y\in W^{1,p}(\Omega):0\leq y(z)\leq u(z)\ \mbox{for almost all}\ z\in\Omega\}.$$
	
	Therefore equation (\ref{eq63}) becomes
	\begin{eqnarray*}
		&&\left\langle A(\tilde{u}_{\lambda}),h\right\rangle+\int_{\Omega}c_{33}\tilde{u}_{\lambda}^{p-1}hdz=\lambda\int_{\partial\Omega}c_{12}\tilde{u}_{\lambda}^{q-1}hd\sigma\ \mbox{for all}\ h\in W^{1,p}(\Omega),\\
		&\Rightarrow&-{\rm div}\, a(D\tilde{u}_{\lambda}(z))+c_{33}\tilde{u}_{\lambda}(z)^{p-1}=0\ \mbox{for almost all}\ z\in\Omega,\\
		&&\frac{\partial \tilde{u}_{\lambda}}{\partial n_a}=\lambda c_{12}\tilde{u}_{\lambda}^{q-1}\ \mbox{on}\ \partial\Omega,\tilde{u}_{\lambda}\geq 0,\tilde{u}_{\lambda}\neq 0\ \mbox{(see Papageorgiou and R\u adulescu \cite{28})},\\
		&\Rightarrow&\tilde{u}_{\lambda}=\bar{u}_{\lambda}\ (\mbox{see Proposition \ref{prop13}}).
	\end{eqnarray*}
	
	Since $u\in S_+(\lambda)$ is arbitrary, we conclude that
	$$\bar{u}_{\lambda}\leq u\ \mbox{for all}\ u\in S_+(\lambda).$$
	
	\textit{(b)} In a similar fashion, we show that if $\lambda\in(0,\lambda_-)$, then $v\leq\bar{v}_{\lambda}$ for all $v\in S_-(\lambda)$.
\end{proof}

Using this proposition, we can produce the desired extremal constant sign solutions for problem \eqref{eqP}.

As in Filippakis and Papageorgiou \cite{10} (see Lemmata 4.1 and 4.2), we have:
\begin{itemize}
	\item	$S_+(\lambda)$ is downward directed, that is, if $u_1,u_2\in S_+(\lambda)$, then we can find $u\in S_+(\lambda)$ such that $u\leq u_1,u\leq u_2$.
	\item $S_-(\lambda)$ is upward directed, that is, if $v_1,v_2\in S_-(\lambda)$, then we can find $v\in S_-(\lambda)$ such that $v_1\leq v,v_2\leq v$.
\end{itemize}
\begin{prop}\label{prop15}
	If hypotheses $H(a),H(f),H(\beta)$ hold, then
	\begin{itemize}
		\item[(a)] for every $\lambda\in(0,\lambda_+)$ problem \eqref{eqP} has a smallest positive solution
		$$u^*_{\lambda}\in D_+;$$
		\item[(b)] for every $\lambda\in(0,\lambda_-)$ problem \eqref{eqP} has a biggest negative solution
		$$v^*_{\lambda}\in-D_+.$$
	\end{itemize}
\end{prop}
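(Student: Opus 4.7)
The plan for part (a) is to realize $u_\lambda^*$ as the limit of a decreasing sequence of positive solutions whose pointwise infimum coincides with the essential infimum of $S_+(\lambda)$, exploiting the downward directedness of $S_+(\lambda)$ together with the uniform lower bound supplied by Proposition~\ref{prop14}. Fix $\lambda\in(0,\lambda_+)$. A standard result on downward directed families of measurable functions (tailored to extremal-solution arguments of this kind) produces a decreasing sequence $\{u_n\}_{n\ge 1}\subseteq S_+(\lambda)$ with $\inf_{n\ge 1}u_n(z)=\inf_{u\in S_+(\lambda)}u(z)$ for almost all $z\in\Omega$. By Proposition~\ref{prop14}(a) one has $\bar u_\lambda\le u_n\le u_1$ on $\overline\Omega$ for every $n$, and since $u_1\in D_+\subseteq C^1(\overline\Omega)$ the sequence $\{u_n\}$ is uniformly bounded in $L^\infty(\Omega)$.

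Next I would test the weak formulation $\langle A(u_n),h\rangle=\int_\Omega f(z,u_n)h\,dz+\lambda\int_{\partial\Omega}\beta(z,u_n)h\,d\sigma$ with $h=u_n$. Lemma~\ref{lem2}(c) controls the left-hand side from below by $\frac{c_1}{p-1}\|Du_n\|_p^p$, while the uniform $L^\infty$-bound on $u_n$ together with hypothesis $H(f)(i)$ and (\ref{eq19}) makes the right-hand side uniformly bounded. Hence $\{u_n\}$ is bounded in $W^{1,p}(\Omega)$, and along a subsequence $u_n\stackrel{w}{\rightarrow}u_\lambda^*$ in $W^{1,p}(\Omega)$ with $u_n\to u_\lambda^*$ in $L^r(\Omega)$ and in $L^q(\partial\Omega)$, by Sobolev embedding and the compactness of the trace map. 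Testing with $h=u_n-u_\lambda^*$ and passing to the limit using these strong convergences yields $\limsup_n\langle A(u_n),u_n-u_\lambda^*\rangle\le 0$, so the $(S)_+$ property from Proposition~\ref{prop4} upgrades this to strong convergence in $W^{1,p}(\Omega)$. Passing to the limit in the equation then shows that $u_\lambda^*$ solves \eqref{eqP}.

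Since $u_n\ge\bar u_\lambda$ pointwise for every $n$, the limit satisfies $u_\lambda^*\ge\bar u_\lambda\not\equiv 0$, and the nonlinear regularity theory of Lieberman together with the Pucci--Serrin strong maximum principle---both already invoked in the proof of Proposition~\ref{prop12}---place $u_\lambda^*\in D_+$, so that $u_\lambda^*\in S_+(\lambda)$. Minimality is then immediate: $u_n$ decreases pointwise to $u_\lambda^*$ and, by the choice of the sequence, $\inf_n u_n(z)=\inf_{u\in S_+(\lambda)}u(z)$ almost everywhere, whence $u_\lambda^*\le u$ for every $u\in S_+(\lambda)$. Part (b) is obtained by the symmetric argument applied to the upward directed set $S_-(\lambda)$ and the upper bound $\bar v_\lambda\in -D_+$. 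I expect the only genuinely delicate point to be securing the uniform $L^\infty$-control needed for the $W^{1,p}$-estimate; once Proposition~\ref{prop14} is available this is immediate, after which the proof reduces to a standard weak-convergence and $(S)_+$ passage to the limit.
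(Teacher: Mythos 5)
Your proposal is correct and follows essentially the same route as the paper: a decreasing sequence in the downward directed set $S_+(\lambda)$ realizing the infimum (the paper cites Lemma 3.9 of Hu--Papageorgiou for this), a $W^{1,p}$-bound from the sandwich $0\le u_n\le u_1$ together with $H(f)(i)$ and (\ref{eq19}), strong convergence via the $(S)_+$ property of $A$, and the lower bound $\bar u_\lambda\le u_n$ from Proposition \ref{prop14} to guarantee the limit is a nontrivial element of $S_+(\lambda)$. The only difference is cosmetic: you make the boundedness estimate explicit by testing with $h=u_n$, where the paper states it directly.
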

\begin{proof}
	\textit{(a)} Using Lemma 3.9, p. 178 of Hu and Papageorgiou \cite{18}, we can find a decreasing sequence $\{u_n\}_{n\geq 1}\subseteq S_+(\lambda)$ such that
	$$\inf S_+(\lambda)=\inf\limits_{n\geq 1}u_n.$$
	
	We have for all $h\in W^{1,p}(\Omega)$ and all $n\in \NN$
	\begin{eqnarray}\label{eq64}
		\left\langle A(u_n),h\right\rangle=\int_{\Omega}f(z,u_n)hdz+\lambda\int_{\partial\Omega}\beta(z,u_n)hd\sigma\,.
	\end{eqnarray}
	
	Since $0\leq u_n\leq u_1$ for all $n\in\NN$, using (\ref{eq64}), Corollary \ref{cor3}, hypothesis $H(f)(i)$ and (\ref{eq19}), we can infer that $\{u_n\}_{n\geq 1}\subseteq W^{1,p}(\Omega)$ is bounded. So, we may assume that
	\begin{equation}\label{eq65}
		u_n\stackrel{w}{\rightarrow}u^*_{\lambda}\ \mbox{in}\ W^{1,p}(\Omega)\ \mbox{and}\ u_n\rightarrow u^*_{\lambda}\ \mbox{in}\ L^r(\Omega)\ \mbox{and in}\ L^q(\partial\Omega).
	\end{equation}
	
	In (\ref{eq64}) we choose $h=u_n-u^*_{\lambda}\in W^{1,p}(\Omega)$, pass to the limit as $n\rightarrow\infty$ and use (\ref{eq65}). Then we obtain
	\begin{eqnarray}\label{eq66}
		&&\lim\limits_{n\rightarrow\infty}\left\langle A(u_n),u_n-u^*_{\lambda}\right\rangle=0,\nonumber\\
		&\Rightarrow&u_n\rightarrow u^*_{\lambda}\ \mbox{in}\ W^{1,p}(\Omega)\ (\mbox{see Proposition \ref{prop4}}).
	\end{eqnarray}
	
	So, passing to the limit as $n\rightarrow\infty$ in (\ref{eq64}) and using (\ref{eq66}), we have
	\begin{eqnarray*}
		&&\left\langle A(u^*_{\lambda}),h\right\rangle=\int_{\Omega}f(z,u^*_{\lambda})hdz+\lambda\int_{\partial\Omega}\beta(z,u^*_{\lambda})hd\sigma\ \mbox{for all}\ h\in W^{1,p}(\Omega),\\
		&\Rightarrow&u^*_{\lambda}\ \mbox{is a nonnegative solution of \eqref{eqP}}.
	\end{eqnarray*}
	
	From Proposition \ref{prop14} we know that
	\begin{eqnarray*}
		&&\bar{u}_{\lambda}\leq u_n\ \mbox{for all}\ n\in\NN,\\
		&\Rightarrow&\bar{u}_{\lambda}\leq u^*_{\lambda}\ (\mbox{see (\ref{eq66})}),\\
		&\Rightarrow&u^*_{\lambda}\in S_+(\lambda)\ \mbox{and}\ u^*_{\lambda}=\inf S_+(\lambda).
	\end{eqnarray*}
	
	\textit{(b)} Reasoning in a similar fashion, we show that for all $\lambda\in(0,\lambda_-)$ problem \eqref{eqP} has a biggest negative solution $v^*_{\lambda}\in S_-(\lambda)$.
\end{proof}

In Section 4, using these extremal constant sign solutions, we will produce a nodal (sign changing) solution for problem \eqref{eqP}. For the moment, in the remaining part of this section we examine the maps
\begin{eqnarray}
	&&\lambda\mapsto u^*_{\lambda}\ \mbox{from}\ (0,\lambda_+)\ \mbox{into}\ C^1(\overline{\Omega})\label{eq67},\\
	&&\lambda\mapsto v^*_{\lambda}\ \mbox{from}\ (0,\lambda_-)\ \mbox{into}\ C^1(\overline{\Omega}).\label{eq68}
\end{eqnarray}

The next proposition will be used to prove the monotonicity properties of the maps in (\ref{eq67}), (\ref{eq68}).
\begin{prop}\label{prop16}
	If hypotheses $H(a),H(f)',H(\beta)$ hold, then
	\begin{itemize}
		\item[(a)] given $\lambda,\mu\in(0,\lambda_+)$ with $\mu<\lambda$ and $u_{\lambda}\in S_+(\lambda)$, we can find $u_{\mu}\in S_+(\mu)$ such that
		$$u_{\lambda}-u_{\mu}\in {\rm int}\, \hat{C}_+;$$
		\item[(b)] given $\lambda,\mu\in(0,\lambda_-)$ with $\mu<\lambda$ and $v_{\lambda}\in S_-(\lambda)$, we can find $u_{\mu}\in S_-(\mu)$ such that
		$$v_{\mu}-v_{\lambda}\in {\rm int}\,\hat{C}_+.$$
	\end{itemize}
\end{prop}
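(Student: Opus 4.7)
The plan for part (a) is to build $u_\mu \in S_+(\mu)$ with $u_\mu \le u_\lambda$ by a truncation argument, and then upgrade the ordering to $u_\lambda - u_\mu \in {\rm int}\,\hat{C}_+$ via the strong comparison principle (Proposition \ref{prop6}). Part (b) follows by the symmetric negative-side construction, so I concentrate on (a).

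Fix $\mu<\lambda$ in $(0,\lambda_+)$ and $u_\lambda\in S_+(\lambda)\subseteq D_+$. I would truncate $f(z,\cdot)$ and $\beta(z,\cdot)$ at zero from below and at $u_\lambda(z)$ from above, obtaining Carath\'eodory functions $k_+(z,x)$ and $\gamma_+(z,x)$ that coincide with $f$ and $\beta$ on $[0,u_\lambda(z)]$. The associated functional
\[
\psi_\mu^+(u)=\int_\Omega G(Du)\,dz+\frac{1}{p}||u^-||^p_p-\int_\Omega K_+(z,u)\,dz-\mu\int_{\partial\Omega}\Gamma_+(z,u)\,d\sigma
\]
is coercive (the truncated densities grow only as $O(|u|)$ beyond $u_\lambda$ while Corollary \ref{cor3} controls $G$ from below) and sequentially weakly lower semicontinuous, so it admits a global minimizer $u_\mu$. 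The concavity trick from Proposition \ref{prop12}---rooted in $q<\tau<p$, $H(\beta)(i)$, and $H(a)(iv)$---produces a direction of strict negativity, forcing $u_\mu\neq 0$. Testing the Euler equation with $-u_\mu^-$ yields $u_\mu\ge 0$, and testing with $(u_\mu-u_\lambda)^+$ gives
\[
\langle A(u_\mu)-A(u_\lambda),(u_\mu-u_\lambda)^+\rangle\le (\mu-\lambda)\int_{\partial\Omega}\beta(z,u_\lambda)(u_\mu-u_\lambda)^+\,d\sigma\le 0
\]
by using $u_\lambda\in S_+(\lambda)$, $\mu<\lambda$, and $\beta(z,u_\lambda)\ge 0$ from $H(\beta)(i)$. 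Strict monotonicity of $A$ (Proposition \ref{prop4}) then forces $u_\mu\le u_\lambda$; the truncations are inactive, so $u_\mu$ solves \eqref{eqP} at parameter $\mu$, and the nonlinear regularity theory together with the nonlinear maximum principle place $u_\mu\in D_+\cap S_+(\mu)$.

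To upgrade to the strict ordering, I apply Proposition \ref{prop6} with $u_1=u_\mu$, $u_2=u_\lambda$, and $g_i=f(\cdot,u_i)$. Hypothesis $H(f)'(iv)$ gives $g_1\le g_2$, and if one had $u_\mu\equiv u_\lambda$ then the two boundary conditions would require $(\lambda-\mu)\beta(z,u_\lambda)\equiv 0$ on $\partial\Omega$, which contradicts $u_\lambda|_{\partial\Omega}>0$ together with $H(\beta)(i)$; hence $g_1\not\equiv g_2$. Finally, the identity $(a(Du_i),n)_{\RR^N}=\nu_i\beta(z,u_i)>0$ on $\partial\Omega$ (with $\nu_i\in\{\mu,\lambda\}$), combined with $a(y)=a_0(|y|)y$ and positivity of $a_0$, shows that $\partial u_i/\partial n$ is strictly positive on $\partial\Omega$, so Proposition \ref{prop6} applies and delivers $u_\lambda-u_\mu\in {\rm int}\,\hat{C}_+$. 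The main obstacle is securing the reaction-side strict inequality, and this is precisely what the strict monotonicity hypothesis $H(f)'(iv)$ was introduced to supply: without it, the strict ordering would have to arise solely from the boundary discrepancy, which alone is not enough to invoke Proposition \ref{prop6}.
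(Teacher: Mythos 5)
Your overall strategy is the same as the paper's (truncate at $0$ and $u_\lambda$, minimize the truncated functional, show the minimizer lies in $[0,u_\lambda]$ so the truncations are inactive, then invoke Proposition \ref{prop6} using $H(f)'(iv)$ and the positivity of the normal derivatives supplied by $H(\beta)(i)$), but there is a genuine gap in the construction of $u_\mu$. Your functional
$\psi^+_\mu(u)=\int_\Omega G(Du)\,dz+\frac1p\|u^-\|_p^p-\int_\Omega K_+(z,u)\,dz-\mu\int_{\partial\Omega}\Gamma_+(z,u)\,d\sigma$
is \emph{not} coercive on $W^{1,p}(\Omega)$; in fact it is unbounded below. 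Take $u\equiv c$ a large positive constant: then $\int_\Omega G(Du)\,dz=0$ and $\|u^-\|_p=0$, while for $c>\max_{\overline\Omega}u_\lambda$ the frozen densities give $K_+(z,c)=F(z,u_\lambda(z))+f(z,u_\lambda(z))(c-u_\lambda(z))$ and $\Gamma_+(z,c)=B(z,u_\lambda(z))+\beta(z,u_\lambda(z))(c-u_\lambda(z))$, whose slopes $f(z,u_\lambda(z))$ and $\beta(z,u_\lambda(z))$ are strictly positive (by $H(f)'(iv)$ with $f(z,0)=0$, $u_\lambda\in D_+$, and $H(\beta)(i)$). Hence $\psi^+_\mu(c)\to-\infty$ linearly as $c\to+\infty$: Corollary \ref{cor3} only bounds $G(Du)$ below by a multiple of $|Du|^p$, and $\|Du\|_p$ does not control $\|u\|_p$ on $W^{1,p}(\Omega)$ (constants are invisible). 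So there is no global minimizer and the direct-method step collapses. The paper avoids exactly this by adding the full perturbation $\frac1p\|u\|_p^p$ to the functional and compensating with the extra term $+(x^+)^{p-1}$ inside the truncation on $[0,u_\lambda]$ (see (\ref{eq69})--(\ref{eq70}) and the definition of $\vartheta^+_\mu$); then the $p$-growth of $\frac1p\|u\|_p^p$ dominates the frozen linear growth and coercivity holds.

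The same omission also undermines your comparison step: $A$ is \emph{not} strictly monotone on $W^{1,p}(\Omega)$ (it annihilates constants, so $\langle A(u)-A(v),u-v\rangle=0$ whenever $u-v$ is constant). From $\langle A(u_\mu)-A(u_\lambda),(u_\mu-u_\lambda)^+\rangle\le 0$ you may only conclude $D(u_\mu-u_\lambda)^+=0$ a.e., i.e. $(u_\mu-u_\lambda)^+\equiv c\ge 0$; to exclude $c>0$ you would still need an extra argument, e.g. that the boundary term $(\mu-\lambda)c\int_{\partial\Omega}\beta(z,u_\lambda)\,d\sigma$ is then strictly negative while the left-hand side vanishes. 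The paper's zeroth-order term produces instead $\int_\Omega(u_\mu^{p-1}-u_\lambda^{p-1})(u_\mu-u_\lambda)^+dz\le 0$, which yields $u_\mu\le u_\lambda$ at once. The remainder of your outline (nonzero minimizer via $q<\tau<p$, the inactive truncations, regularity, the argument that $u_\mu\neq u_\lambda$ from the differing boundary conditions, and the verification of the hypotheses of Proposition \ref{prop6}) agrees with the paper and is fine once the truncated functional is set up as above.
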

\begin{proof}
	\textit{(a)} We introduce the following Carath\'eodory functions
	\begin{eqnarray}
		&&e_+(z,x)=\left\{\begin{array}{ll}
			f(z,x^+)+(x^+)^{p-1}&\mbox{if}\ x\leq u_{\lambda}(z)\\
			f(z,u_{\lambda}(z))+u_{\lambda}(z)^{p-1}&\mbox{if}\ u_{\lambda}(z)<x,
		\end{array}\right.\label{eq69}\\
		&&d^+_{\mu}(z,x)=\left\{\begin{array}{ll}
			\mu\beta(z,x)&\mbox{if}\ x\leq u_{\lambda}(z)\\
			\mu\beta(z,u_{\lambda}(z))&\mbox{if}\ u_{\lambda}(z)<x
		\end{array}\right.\ \mbox{for all}\ (z,x)\in\partial\Omega\times\RR.\label{eq70}
	\end{eqnarray}
	
	We set
	$$E_+(z,x)=\int^x_0e_+(z,s)ds\ \mbox{and}\ D^+_{\mu}(z,x)=\int^x_0d^+_{\mu}(z,s)ds$$
	and consider the $C^1$-functional $\vartheta^+_{\mu}(z,x):W^{1,p}(\Omega)\rightarrow\RR$ defined by
	\begin{eqnarray*}
		 &&\vartheta^+_{\mu}(u)=\int_{\Omega}G(Du)dz+\frac{1}{p}||u||^p_p-\int_{\Omega}E_+(z,u)dz-\int_{\partial\Omega}D^+_{\mu}(z,u)d\sigma\\
		&&\mbox{for all}\ u\in W^{1,p}(\Omega).
	\end{eqnarray*}
	
	From Corollary \ref{cor3} and (\ref{eq69}), (\ref{eq70}), it is clear that the function $\vartheta^+_{\mu}$ is coercive. Also, it is sequentially weakly lower semicontinuous. So, we can find $u_{\mu}\in W^{1,p}(\Omega)$ such that
	\begin{equation}\label{eq71}
		\vartheta^+_{\mu}(u_{\mu})=\inf[\vartheta^+_{\mu}(u):u\in W^{1,p}(\Omega)].
	\end{equation}
	
	Since $q<\tau<p$, we have
	\begin{eqnarray*}
		&&\vartheta^+_{\mu}(u_{\mu})<0=\vartheta^+_{\mu}(0)\ (\mbox{see the proof of Proposition \ref{prop12}}),\\
		&\Rightarrow&u_{\mu}\neq 0.
	\end{eqnarray*}
	
	From (\ref{eq71}) we have
	\begin{eqnarray}\label{eq72}
		&&(\vartheta^+_{\mu})'(u_{\mu})=0,\nonumber\\
		&\Rightarrow&\left\langle A(u_{\mu}),h\right\rangle+\int_{\Omega}|u_{\mu}|^{p-2}u_{\mu}hdz=\int_{\Omega}e_+(z,u_{\mu})hdz+\int_{\partial\Omega}d^+_{\mu}(z,u_{\mu})hd\sigma\\
		&&\mbox{for all}\ h\in W^{1,p}(\Omega).\nonumber
	\end{eqnarray}
	
	In (\ref{eq72}) we first choose $h=-u^-_{\mu}\in W^{1,p}(\Omega)$. From Lemma \ref{lem2} and (\ref{eq69}), (\ref{eq70}) we have
	\begin{eqnarray*}
		&&\frac{c_1}{p-1}||Du^-_{\mu}||^p_p+||u^-_{\mu}||^p_p\leq 0,\\
		&\Rightarrow&u_{\mu}\geq 0,u_{\mu}\neq 0.
	\end{eqnarray*}
	
	Next, in (\ref{eq72}) we choose $h=(u_{\mu}-u_{\lambda})^+\in W^{1,p}(\Omega)$. Then
	\begin{eqnarray*}
		&&\left\langle A(u_{\mu}),(u_{\mu}-u_{\lambda})^+\right\rangle+\int_{\Omega}u^{p-1}_{\mu}(u_{\mu}-u_{\lambda})^+dz\\
		 &=&\int_{\Omega}f(z,u_{\lambda})(u_{\mu}-u_{\lambda})^+dz+\int_{\Omega}u_{\lambda}^{p-1}(u_{\mu}-u_{\lambda})^+dz+\mu\int_{\partial\Omega}\beta(z,u_{\lambda})(u_{\mu}-u_{\lambda})^+d\sigma\\
		&&(\mbox{see (\ref{eq69}),(\ref{eq70})})\\
		 &\leq&\int_{\Omega}f(z,u_{\lambda})(u_{\mu}-u_{\lambda})^+dz+\int_{\Omega}u_{\lambda}^{p-1}(u_{\mu}-u_{\lambda})^+dz+\lambda\int_{\partial\Omega}\beta(z,u_{\lambda})(u_{\lambda}-u_{\mu})^+d\sigma\\
		&&(\mbox{since}\ \mu<\lambda,\ \mbox{see hypothesis}\ H(\beta)(i))\\
		&=&\left\langle A(u_{\lambda}),(u_{\mu}-u_{\lambda})^+\right\rangle+\int_{\Omega}u_{\lambda}^{p-1}(u_{\mu}-u_{\lambda})^+dz\ (\mbox{since}\ u_{\lambda}\in S_+(\lambda)),\\
		\Rightarrow&&\left\langle A(u_{\mu})-A(u_{\lambda}),(u_{\mu}-u_{\lambda})^+\right\rangle+\int_{\Omega}(u^{p-1}_{\mu}-u_{\lambda}^{p-1})(u_{\mu}-u_{\lambda})^+dz\leq 0,\\
		\Rightarrow&&u_{\mu}\leq u_{\lambda}.
	\end{eqnarray*}
	
	So, we have proved that
	\begin{equation}\label{eq73}
		u_{\mu}\in[0,u_{\lambda}].
	\end{equation}
	
	Invoking (\ref{eq69}), (\ref{eq70}), (\ref{eq73}), equation (\ref{eq72}) becomes
	\begin{eqnarray}\label{eq74}
		&&\left\langle A(u_{\mu}),h\right\rangle=\int_{\Omega}f(z,u_{\mu})hdz+\mu\int_{\partial\Omega}\beta(z,u_{\mu})hd\sigma\ \mbox{for all}\ h\in W^{1,p}(\Omega),\nonumber\\
		&\Rightarrow&-{\rm div}\, a(Du_{\mu}(z))=f(z,u_{\mu}(z))\ \mbox{for almost all}\ z\in\Omega,\ \frac{\partial u_{\mu}}{\partial n_a}=\mu\beta(z,u_{\mu})\ \mbox{on}\ \partial\Omega\\
		&&(\mbox{see Papageorgiou and R\u adulescu \cite{28}}),\nonumber\\
		&\Rightarrow&u_{\mu}\in S_+(\mu).\nonumber
	\end{eqnarray}
	
	Evidently, $u_{\mu}\neq u_{\lambda}$ (recall that $\mu<\lambda$ and use hypothesis $H(\beta)(i)$). Then hypothesis $H(f)'(iv)$ implies that
	\begin{eqnarray*}
		&&-{\rm div}\, a (Du_{\mu}(z))=f(z,u_{\mu}(z))=g_{\mu}(z)\\
		&&\leq g_{\lambda}(z)=f(z,u_{\lambda}(z))=-{\rm div}\, a(Du_{\lambda}(z))\ \mbox{for almost all}\ z\in\Omega,
	\end{eqnarray*}
	with $g_{\mu},g_{\lambda}\in L^{\infty}(\Omega)$ and $g_{\mu}\not\equiv g_{\lambda}$. Also, we have
	$$\left.\frac{\partial u_{\mu}}{\partial n}\right|_{\partial\Omega}>0,\ \ \left.\frac{\partial u_{\lambda}}{\partial n}\right|_{\partial\Omega}>0\ \mbox{see hypothesis}\ H(\beta)(i)).$$
	
	Therefore, we can use Proposition \ref{prop6} and infer that
	$$u_{\lambda}-u_{\mu}\in {\rm int}\,\hat{C}_+\ (\mbox{that is},\ u_{\mu}\in {\rm int}_{C^1(\overline{\Omega})}[0,u_{\lambda}]).$$
	
	\textit{(b)} For this part, we consider the following Carath\'eodory functions
	\begin{eqnarray}
		&&e_-(z,x)=\left\{\begin{array}{ll}
			f(z,v_{\lambda}(z))+|v_{\lambda}(z)|^{p-2}v_{\lambda}(z)&\mbox{if}\ x<v_{\lambda}(z)\\
			f(z,-x^-)+|x|^{p-2}(-x^-)&\mbox{if}\ v_{\lambda}(z)\leq x,
		\end{array}\right.\label{eq75}\\
		&&d^-_{\mu}(z,x)=\left\{\begin{array}{ll}
			\mu\beta(z,v_{\lambda}(z))&\mbox{if}\ x<v_{\lambda}(z)\\
			\mu\beta(z,-x^-)&\mbox{if}\ v_{\lambda}(z)\leq x
		\end{array}\right.\ \mbox{for all}\ (z,x)\in\partial\Omega\times\RR.\label{eq76}
	\end{eqnarray}
	
	We set $E_-(z,x)=\int^x_0e_-(z,s)ds$ and $D^-_{\mu}(z,x)=\int^x_0d^-_{\mu}(z,s)ds$ and consider the $C^1$-functional $\vartheta^-_{\mu}:W^{1,p}(\Omega)\rightarrow\RR$ defined by
	 $$\vartheta^-_{\mu}(u)=\int_{\Omega}G(Du)dz+\frac{1}{p}||u||^p_p-\int_{\Omega}E_-(z,u)dz-\int_{\partial\Omega}D^-_{\mu}(z,u)d\sigma\ \mbox{for all}\ u\in W^{1,p}(\Omega).$$
	
	Reasoning as in part \textit{(a)}, we produce some $v_{\mu}\in S_-(\mu)$ such that
	$$v_{\mu}-v_{\lambda}\in {\rm int}\, \hat{C}_+\ (\mbox{that is,}\ v_{\mu}\in {\rm int}_{C^1(\overline{\Omega})}[v_{\lambda},0]).$$
\end{proof}

Now we can establish the monotonicity and continuity properties of the two maps defined in (\ref{eq67}) and (\ref{eq68}).
\begin{prop}\label{prop17}
	If hypotheses $H(a),H(f)',H(\beta)$ hold, then
	\begin{itemize}
		\item[(a)] the map $\lambda\mapsto u^*_{\lambda}$ from $(0,\lambda_+)$ into $C^1(\overline{\Omega})$ is strictly increasing in the sense that $\mu<\lambda\Rightarrow u^*_{\lambda}-u^*_{\mu}\in {\rm int}\,\hat{C}_+$ and is left continuous;
		\item[(b)] the map $\lambda\mapsto v^*_{\lambda}$ from $(0,\lambda_-)$ into $C^1(\overline{\Omega})$ is strictly decreasing in the sense that $\mu<\lambda\Rightarrow v^*_{\mu}-v^*_{\lambda}\in {\rm int}\,\hat{C}_+$ and is right continuous.
	\end{itemize}
\end{prop}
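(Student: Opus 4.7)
The plan is to derive strict monotonicity and one-sided continuity from the combination of Proposition \ref{prop16} with the extremality statements in Proposition \ref{prop15}, using the nonlinear regularity theory of Lieberman to pass from weak to $C^1$-convergence.

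For the strict monotonicity in part (a), I fix $\mu<\lambda$ in $(0,\lambda_+)$ and apply Proposition \ref{prop16}(a) with $u_\lambda:=u_\lambda^*\in S_+(\lambda)$ to produce some $u_\mu\in S_+(\mu)$ with $u_\lambda^*-u_\mu\in{\rm int}\,\hat{C}_+$. Extremality of $u_\mu^*$ yields $u_\mu\geq u_\mu^*$ pointwise, and, since both lie in $D_+$, the difference $u_\mu-u_\mu^*$ lies in $\hat{C}_+$ (at boundary points where the difference vanishes it has a nonnegative minimum on $\overline{\Omega}$, so its outward normal derivative is nonpositive). Writing
\[
u_\lambda^*-u_\mu^* \;=\; (u_\lambda^*-u_\mu)+(u_\mu-u_\mu^*)
\]
and using the elementary fact that ${\rm int}\,\hat{C}_++\hat{C}_+\subseteq{\rm int}\,\hat{C}_+$, I conclude $u_\lambda^*-u_\mu^*\in{\rm int}\,\hat{C}_+$. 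The strictly decreasing statement in (b) is the mirror argument via Proposition \ref{prop16}(b).

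For the left continuity of $\lambda\mapsto u_\lambda^*$, I pick $\lambda_n\uparrow\lambda$ in $(0,\lambda_+)$. By the strict monotonicity just established, $\{u^*_{\lambda_n}\}$ is pointwise increasing and bounded above by $u_\lambda^*$, hence $L^\infty$-bounded. Testing the weak formulation of $(P_{\lambda_n})$ with $h=u^*_{\lambda_n}$ and invoking Corollary \ref{cor3}, hypothesis $H(f)(i)$, and (\ref{eq19}) produces a uniform $W^{1,p}$-bound. Along a subsequence $u^*_{\lambda_n}\stackrel{w}{\rightharpoonup}\tilde u$ in $W^{1,p}(\Omega)$; compactness of the Sobolev and trace embeddings into $L^r(\Omega)$ and $L^q(\partial\Omega)$ lets me test with $h=u^*_{\lambda_n}-\tilde u$ to get $\limsup_n\langle A(u^*_{\lambda_n}),u^*_{\lambda_n}-\tilde u\rangle\leq 0$, so the $(S)_+$-property of Proposition \ref{prop4} upgrades convergence to strong in $W^{1,p}(\Omega)$. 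Passing to the limit in the weak form shows that $\tilde u$ solves $(P_\lambda)$, and the $L^\infty$-bound together with Lieberman's regularity yields a uniform $C^{1,\gamma}(\overline\Omega)$-bound; Arz\`ela--Ascoli then gives $u^*_{\lambda_n}\to\tilde u$ in $C^1(\overline\Omega)$. Since $\tilde u\geq u^*_{\lambda_1}\in D_+$ by monotonicity, $\tilde u\in S_+(\lambda)$, and the extremality of $u_\lambda^*$ forces $u_\lambda^*\leq\tilde u\leq u_\lambda^*$, so $\tilde u=u_\lambda^*$; uniqueness of the subsequential limit then promotes convergence of the full sequence. The right continuity in (b) is identical up to sign, using Proposition \ref{prop16}(b) and a decreasing sequence $\lambda_n\downarrow\lambda$. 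The main delicate point throughout is the nontriviality of the weak limit $\tilde u$, which is precisely why the strict monotonicity must be proved \emph{before} the continuity argument, so that the first term of the monotone sequence provides a strictly positive (respectively, strictly negative) barrier in $D_+$ (respectively, $-D_+$) that survives the passage to the limit.
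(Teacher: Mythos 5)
For part (a) your argument is correct and is essentially the paper's. The strict monotonicity is obtained, as in the paper, from Proposition \ref{prop16}(a) applied with $u_{\lambda}=u^*_{\lambda}$ together with the minimality of $u^*_{\mu}$; your decomposition $u^*_{\lambda}-u^*_{\mu}=(u^*_{\lambda}-u_{\mu})+(u_{\mu}-u^*_{\mu})$, the observation that a nonnegative $C^1$ function vanishing at a boundary point has nonpositive outward normal derivative there, and the fact that ${\rm int}\,\hat{C}_++\hat{C}_+\subseteq{\rm int}\,\hat{C}_+$ simply make explicit the step the paper compresses into ``(see Proposition \ref{prop15})''. Your left-continuity proof follows the same compactness-plus-extremality scheme: the paper gets $C^1(\overline{\Omega})$ compactness directly from the order bounds $u^*_{\tilde{\lambda}}\leq u^*_{\lambda_n}\leq u^*_{\hat{\lambda}}$, Lieberman's estimates and the compact embedding $C^{1,\alpha}(\overline{\Omega})\hookrightarrow C^1(\overline{\Omega})$, whereas you take a detour through a uniform $W^{1,p}$ bound and the $(S)_+$ property of Proposition \ref{prop4} before invoking Lieberman; both routes work, and both end with the same identification of the limit (in your text the inequality $\tilde{u}\leq u^*_{\lambda}$ comes from monotonicity, not extremality, but that is only a wording slip) and the Urysohn argument for the full sequence.

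The genuine gap is in part (b). You assert that right continuity is ``identical up to sign'' using $\lambda_n\downarrow\lambda$, but the mirrored argument does not close. With $\lambda_n\downarrow\lambda$, monotonicity gives $v^*_{\lambda_n}\leq v^*_{\lambda}$, so the $C^1$ limit $\tilde{v}$ of a subsequence satisfies $\tilde{v}\leq v^*_{\lambda}$ and $\tilde{v}\in S_-(\lambda)$; but the extremality of $v^*_{\lambda}$ (it is the \emph{biggest} negative solution) yields exactly the same inequality $\tilde{v}\leq v^*_{\lambda}$, not its reverse, so you cannot conclude $\tilde{v}=v^*_{\lambda}$. In part (a) the two inequalities point in opposite directions ($\tilde{u}\leq u^*_{\lambda}$ from monotonicity, $u^*_{\lambda}\leq\tilde{u}$ from minimality), and that opposition is what pins the limit down. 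The sign-reversed version of that argument for the biggest negative solution therefore takes $\lambda_n\uparrow\lambda$ (giving $\tilde{v}\geq v^*_{\lambda}$ from monotonicity and $\tilde{v}\leq v^*_{\lambda}$ from maximality) and proves \emph{left} continuity of $\lambda\mapsto v^*_{\lambda}$, not the right continuity stated. The paper disposes of (b) with ``in a similar fashion'', so you have not diverged from its method; but as written your choice of sequence direction does not prove the stated one-sided continuity, and either a genuinely different argument must be supplied for right continuity of $v^*_{\lambda}$ or the continuity direction claimed in (b) has to be re-examined.
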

\begin{proof}
	\textit{(a)} Let $\mu,\lambda\in(0,\lambda_+)$ with $\mu<\lambda$. From Proposition \ref{prop15}, we know that problem \eqref{eqP} has a smallest positive solution $u^*_{\lambda}\in S_+(\lambda)$. Invoking Proposition \ref{prop16}, we can find $u_{\mu}\in S_+(\mu)$ such that
	\begin{eqnarray*}
		&&u^*_{\lambda}-u_{\mu}\in {\rm int}\,\hat{C}_+,\\
		&\Rightarrow&u^*_{\lambda}-u^*_{\mu}\in {\rm int}\,\hat{C}_+\ (\mbox{see Proposition \ref{prop15}})\\
		&\Rightarrow&\lambda\mapsto u^*_{\lambda}\ \mbox{is strictly increasing as claimed by the proposition}.
	\end{eqnarray*}
	
	Next, let $\{\lambda_n,\lambda\}_{n\geq 1}\subseteq(0,\lambda_+)$ and assume that $\lambda_n\rightarrow\lambda^-$. We have
	$$0<\tilde{\lambda}\leq\lambda_n\leq\hat{\lambda}<\lambda_+\ \mbox{for all}\ n\in\NN.$$
	
	Then from Proposition \ref{prop15} and the first part of the proof, we have
	\begin{equation}\label{eq77}
		0\leq u^*_{\tilde{\lambda}}\leq u^*_{\lambda_n}\leq u^*_{\hat{\lambda}}\ \mbox{for all}\ n\in\NN.
	\end{equation}
	
	Hence the nonlinear regularity theory of Lieberman \cite{21} implies that there exist $\alpha\in(0,1)$ and $c_{37}>0$ such that
	\begin{equation}\label{eq78}
		u^*_{\lambda_n}\in C^{1,\alpha}(\overline{\Omega})\ \mbox{and}\ ||u^*_{\lambda_n}||_{C^{1,\alpha}(\overline{\Omega})}\leq c_{37}\ \mbox{for all}\ n\in\NN.
	\end{equation}
	
	Exploiting the compact embedding of $C^{1,\alpha}(\overline{\Omega})$ into $C^1(\overline{\Omega})$ and by passing to a subsequence if necessary, we can say that
	\begin{equation}\label{eq79}
		u^*_{\lambda_n}\rightarrow\tilde{u}_{\lambda}\ \mbox{in}\ C^1(\overline{\Omega}).
	\end{equation}
	
	Evidently, we have
	$$u^*_{\tilde{\lambda}}\leq\tilde{u}_{\lambda}\ \mbox{and}\ \tilde{u}_{\lambda}\in S_+(\lambda)\ (\mbox{see (\ref{eq77}), (\ref{eq79})}).$$
	
	Suppose that $\tilde{u}_{\lambda}\neq u^*_{\lambda}$. Then we can find $z_0\in\Omega$ such that
	\begin{eqnarray*}
		&&u^*_{\lambda}(z_0)<\tilde{u}_{\lambda}(z_0),\\
		&\Rightarrow&u^*_{\lambda}(z_0)<u^*_{\lambda_n}(z_0)\ \mbox{for all}\ n\geq n_0\ (\mbox{see (\ref{eq79})}).
	\end{eqnarray*}
	
	This contradicts the first part (that is, the ``monotonicity'' part) of the proof. So, $\tilde{u}_{\lambda}=u^*_{\lambda}$ and now by Urysohn's criterion we conclude that for the initial sequence we have
	\begin{eqnarray*}
		&&u^*_{\lambda_n}\rightarrow u^*_{\lambda}\ \mbox{in}\ C^1(\overline{\Omega}),\\
		&\Rightarrow&\lambda\mapsto u^*_{\lambda}\ \mbox{is left continuous from}\ (0,\lambda_+)\ \mbox{into}\ C^1(\overline{\Omega}).
	\end{eqnarray*}
	
	\textit{(b)} In a similar fashion we show that the map $\lambda\mapsto v^*_{\lambda}$ from $(0,\lambda_-)$ into $C^1(\overline{\Omega})$ is strictly decreasing (in the sense described in the proposition) and right continuous.
\end{proof}

\section{Nodal Solutions}
	
	In this section we turn our attention to the existence of nodal solutions. To do this, we will use a combination of variational methods and Morse theory. So, we start with the computation of the critical groups at the origin of the energy (Euler) functional $\varphi_{\lambda}$.
\begin{prop}\label{prop18}
	If hypotheses $H(a),H(f)',H(\beta)$ hold, $\lambda>0$, and $K_{\varphi_{\lambda}}$ is finite, then $C_k(\varphi_{\lambda},0)=0$ for all $k\in\NN_0$.
\end{prop}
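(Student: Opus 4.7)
The strategy exploits the structural fact that near the origin, the sublinear boundary term dominates the energy: $\beta$ has exponent $q < \tau < p$ (by $H(\beta)(i)$), while $G$ has $\tau$-growth near zero (by $H(a)(iv)$) and $F$ is $o(|x|^p)$ at $0$ (by $H(f)(iii)$). This renders $\varphi_\lambda$ ``concave-like'' at the origin. The plan is to show that, for small $\rho > 0$, both the sublevel set $\varphi_\lambda^0 \cap B_\rho$ and its puncture $\varphi_\lambda^0 \cap B_\rho \setminus\{0\}$ are contractible, so that the relative singular homology defining $C_k(\varphi_\lambda, 0)$ vanishes.

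First I would establish two complementary pointwise estimates. The ray-concavity estimate: for every $u \in W^{1,p}(\Omega)$ with $u|_{\partial\Omega} \not\equiv 0$,
\[
\varphi_\lambda(tu) \leq C_1(u)\, t^\tau + C_2(u)\, t^r - \lambda\, C_3(u)\, t^q,\qquad C_3(u) > 0,
\]
for small $t > 0$; since $q < \tau < p < r$, this yields $\varphi_\lambda(tu) < 0$ for small $t$. The inputs are $G_0(s) \leq \tilde c\, s^\tau$ near $0$, $|F(z,x)| \leq \epsilon|x|^p + c_\epsilon|x|^r$, and $B(z,x) \geq (c_{12}/q)|x|^q$. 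Complementarily, for $u \in W^{1,p}_0(\Omega)\setminus\{0\}$ with small norm, Poincar\'e combined with Corollary~\ref{cor3} and $H(f)(iii)$ yields $\varphi_\lambda(u) \geq c\|u\|^p - \epsilon\|u\|^p - C\|u\|^r > 0$. Consequently, for some $\rho > 0$, $\varphi_\lambda^0 \cap B_\rho$ contains no nonzero element of $W^{1,p}_0(\Omega)$.

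Next I would derive a transversality inequality. Using $H(a)(iv)$ (giving $(a(y),y)_{\RR^N} - \tau G(y) \geq c_4|y|^p$), $H(\beta)(iv)$ (giving $\tau B - \beta u \geq c_{14}|u|^q$), and $H(f)(iii)$, a direct computation gives
\[
\langle \varphi_\lambda'(u), u\rangle - \tau \varphi_\lambda(u) \geq c_4\|Du\|_p^p + \lambda c_{14}\|u\|_{L^q(\partial\Omega)}^q - \epsilon\|u\|_p^p - C\|u\|_r^r,
\]
which is strictly positive for small nonzero $u$ (for $u \in W^{1,p}_0(\Omega)$, Poincar\'e absorbs $\epsilon\|u\|_p^p$ into $c_4\|Du\|_p^p$; for $u$ with nontrivial trace, the $\|u\|_{L^q(\partial\Omega)}^q$ boundary term dominates). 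Hence $\langle \varphi_\lambda'(u), u\rangle > 0$ whenever $\varphi_\lambda(u) \geq 0$ and $u \neq 0$ is small. So along each ray from the origin, $\varphi_\lambda$ crosses zero transversally at most once past the origin, and $\varphi_\lambda^0 \cap B_\rho$ is radially star-shaped with respect to $0$: for $u \in \varphi_\lambda^0 \cap B_\rho$, the entire segment $[0, u]$ lies in $\varphi_\lambda^0 \cap B_\rho$.

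Combining these, $\varphi_\lambda^0 \cap B_\rho$ is contractible via $(s, u) \mapsto (1-s)u$. For the puncture $\varphi_\lambda^0 \cap B_\rho \setminus\{0\}$, I retract radially onto the ``negative sphere'' $S^* = \{t^*(u)\,u : u \in \partial B_1,\ u|_{\partial\Omega}\not\equiv 0\}$, where $t^*(u)$ is the largest $t \in (0, \rho]$ with $\varphi_\lambda(tu) \leq 0$, well-defined and continuous by the transversality together with the first estimate. Since $S^*$ sits inside the unit sphere of $W^{1,p}(\Omega)$ with an infinite-codimensional subsphere removed, it is contractible. Both spaces being contractible, the pair has trivial relative singular homology and $C_k(\varphi_\lambda, 0) = 0$ for all $k \in \NN_0$. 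The principal obstacle is verifying continuity of $t^*(u)$ globally on the ``trace-nonzero'' unit sphere; here the fact (from the first step) that $W^{1,p}_0\setminus\{0\}$ is excluded from the sublevel set is essential, for otherwise $t^*$ could degenerate to zero along approach to $W^{1,p}_0(\Omega)$.
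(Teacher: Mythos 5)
Your proposal follows the same core strategy as the paper's proof: use $q<\tau<p<r$ (resp.\ $\eta$) to get (i) $\varphi_{\lambda}(tu)<0$ for small $t>0$ along rays, (ii) the transversality inequality $\left\langle \varphi_{\lambda}'(u),u\right\rangle>0$ for small nonzero $u$ on the zero level set, obtained exactly as in the paper's (\ref{eq85})--(\ref{eq90}) from $H(a)(iv)$, $H(\beta)(iv)$, $H(f)(iii)$, hence (iii) star-shapedness and contractibility of $\varphi^0_{\lambda}\cap \bar B_{\rho}$ via $(s,u)\mapsto(1-s)u$, and finally contractibility of the punctured sublevel set, so that the relative homology defining $C_k(\varphi_{\lambda},0)$ vanishes. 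Where you genuinely deviate is the punctured set: the paper exhibits $(\varphi^0_{\lambda}\cap\bar B_{\rho})\setminus\{0\}$ as a retract of the contractible set $\bar B_{\rho}\setminus\{0\}$ through the map $d$ built from the unique zero crossing $t(u)$, whereas you deformation retract the punctured sublevel set radially onto the ``negative sphere'' $S^*$ and argue that $S^*$ is contractible because it is homeomorphic (by radial projection) to the unit sphere with the trace-zero subsphere removed, $W^{1,p}_0(\Omega)$ having infinite codimension. Your route has a real merit: it confronts the trace-zero directions, where the ray estimate (the paper's (\ref{eq84})) fails --- indeed for $u\in W^{1,p}_0(\Omega)\setminus\{0\}$ your Poincar\'e computation gives $\varphi_{\lambda}(tu)>0$ for all small $t$, so the crossing $t(u)$ used in the paper's retraction does not exist for such $u$; your complementary estimate excluding $W^{1,p}_0(\Omega)\setminus\{0\}$ from the small sublevel set is precisely what a ray-based argument needs at this point. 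The price is an extra topological input that the paper's device avoids: contractibility of the trace-nonzero sphere requires knowing that $W^{1,p}_0(\Omega)=\ker\gamma_0$ is complemented (a bounded right inverse of the trace map supplies a complement isomorphic to the infinite-dimensional trace space) together with the contractibility of punctured infinite-dimensional spaces; you assert this rather than prove it, and note that it would fail if the trace space were finite dimensional.

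Two steps in your sketch should be tightened. First, the positivity of $c_4\|Du\|^p_p+\lambda c_{14}\|u\|^q_{L^q(\partial\Omega)}-\epsilon\|u\|^p_p-C\|u\|^r_r$ for all small nonzero $u$ is not established by your dichotomy (``Poincar\'e for $W^{1,p}_0$ / boundary term dominates otherwise''): a $u$ with large $W^{1,p}_0$-component and tiny nonzero trace fits neither case by itself. The uniform argument is the paper's: for small $\|u\|$ one has $\|u\|^q_{L^q(\partial\Omega)}\geq\|u\|^p_{L^q(\partial\Omega)}$, and $\|Du\|_p+\|u\|_{L^q(\partial\Omega)}$ is an equivalent norm by Proposition~\ref{prop7}, so the two positive terms control $c\|u\|^p$ and absorb $\epsilon\|u\|^p_p+C\|u\|^r_r$ since $p<r$. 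Second, in the ray estimate you should use the global bound $G(y)\leq c_{38}(|y|^{\tau}+|y|^p)$ (Corollary~\ref{cor3} with $H(a)(iv)$, as in (\ref{eq80})) rather than $G_0(s)\leq\tilde c s^{\tau}$ near zero, since $|tDu(z)|$ is not uniformly small; this only adds a harmless $t^p$ term because $t^q$ still dominates. With these repairs, and with the continuity of $t^*(\cdot)$ on trace-nonzero directions reduced, as you indicate, to the transversality inequality plus the cap at $\rho$, your argument is sound.
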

\begin{proof}
	Hypothesis $H(a)(iv)$ and Corollary \ref{cor3} imply that
	\begin{equation}\label{eq80}
		G(y)\leq c_{38}(|y|^{\tau}+|y|^p)\ \mbox{for all}\ y\in\RR^N\ \mbox{and some}\ c_{38}>0.
	\end{equation}
	
	Also, hypotheses $H(f)'(i),(ii),(iii)$ (see also (\ref{eq18})) imply that
	\begin{equation}\label{eq81}
		F(z,x)\geq c_{39}|x|^{\eta}-c_{40}|x|^p\ \mbox{for almost all}\ z\in\Omega\ \mbox{and all}\ x\in\RR,\ \mbox{with}\ c_{39},\ c_{40}>0.
	\end{equation}
	
	Moreover, from (\ref{eq19}) we have
	\begin{equation}\label{eq82}
		|B(z,x)|\leq c_{41}|x|^q\ \mbox{for all}\ (z,x)\in\partial\Omega\times\RR\ \mbox{and some}\ c_{41}>0.
	\end{equation}
	
	For $u\in W^{1,p}(\Omega)$ and $t>0$, we have
	\begin{eqnarray}\label{eq83}
		 \varphi_{\lambda}(tu)&=&\int_{\Omega}G(tDu)dz-\int_{\Omega}F(z,tu)dz-\lambda\int_{\partial\Omega}B(z,tu)d\sigma\nonumber\\
		 &\leq&c_{38}(t^{\tau}||Du||^{\tau}_{\tau}+t^p||Du||^p_p)-c_{39}t^{\eta}||u||^{\eta}_{\eta}+c_{40}t^p||u||^p_p-\lambda c_{41}t^q||u||^q_{L^q(\partial\Omega)}\\
		&&(\mbox{see (\ref{eq80}), (\ref{eq81}), (\ref{eq82})}).\nonumber
	\end{eqnarray}
	
	Since $q<\tau<p<\eta$, from (\ref{eq83}) we see that we can find $t^*=t^*(u)\in(0,1)$ such that
	\begin{equation}\label{eq84}
		\varphi_{\lambda}(tu)<0\ \mbox{for all}\ t\in(0,t^*).
	\end{equation}
	
	Now, let $u\in W^{1,p}(\Omega)$ with $0<||u||\leq 1$ and $\varphi_{\lambda}(u)=0$. Then
	\begin{eqnarray}\label{eq85}
		\left.\frac{d}{dt}\varphi_{\lambda}(tu)\right|_{t=1}&=&\left\langle \varphi'_{\lambda}(u),u\right\rangle\ (\mbox{by the chain rule})\nonumber\\
		 &=&\int_{\Omega}(a(Du),Du)_{\RR^N}dz-\int_{\Omega}f(z,u)udz-\lambda\int_{\partial\Omega}\beta(z,u)ud\sigma\nonumber\\
		&=&\int_{\Omega}[(a(Du),Du)_{\RR^N}-\tau G(Du)]dz\nonumber\\
		&&+\int_{\Omega}[\tau F(z,u)-f(z,u)u]dz+\lambda\int_{\partial\Omega}[\tau B(z,u)-\beta(z,u)u]d\sigma\\
		&&(\mbox{since}\ \varphi_{\lambda}(u)=0).\nonumber
	\end{eqnarray}
	
	Hypothesis $H(a)(iv)$ implies that
	\begin{equation}\label{eq86}
		\int_{\Omega}[(a(Du),Du)_{\RR^N}-\tau G(Du)]dz\geq c_{4}||Du||^p_p.
	\end{equation}
	
	Also, hypotheses $H(f)'(i),(iii)$ imply that given $\epsilon>0$, we can find $c_{42}=c_{42}(\epsilon)>0$ such that
	\begin{eqnarray}\label{eq87}
		&&\tau F(z,x)-f(z,x)x\geq -\epsilon|x|^p-c_{42}|x|^r\ \mbox{for almost all}\ z\in\Omega\ \mbox{and all}\ x\in\RR,\nonumber\\
		&\Rightarrow&\int_{\Omega}[\tau F(z,u)-f(z,u)u]dz\geq-\epsilon||u||^p_p-c_{42}||u||^r_r\,.
	\end{eqnarray}
	
	Finally, from hypothesis $H(\beta)(iv)$, we have
	$$\lambda\int_{\Omega}[\tau B(z,u)-\beta(z,u)u]d\sigma\geq\lambda c_{14}||u||^q_{L^q(\partial\Omega)}.$$
	
	Since $q<p$, for all $||u||_{L^q(\partial\Omega)}\leq 1$ we have
	\begin{eqnarray}\label{eq88}
		&&||u||^q_{L^q(\partial\Omega)}\geq||u||^p_{L^q(\partial\Omega)},\nonumber\\
		&\Rightarrow&\lambda\int_{\partial\Omega}[\tau B(z,u)-\beta(z,u)u]d\sigma\geq \lambda c_{14}||u||^p_{L^q(\partial\Omega).}
	\end{eqnarray}
	
	From Proposition \ref{eq7} (see also the remark following that proposition), we know that
	$$v\mapsto ||Dv||_p+||v||_{L^q(\partial\Omega)},\ v\in W^{1,p}(\Omega),$$
	is an equivalent norm on the Sobolev space $W^{1,p}(\Omega)$.
	
	So, returning to (\ref{eq85}) and using (\ref{eq86}), (\ref{eq87}) and (\ref{eq88}) and choosing small $\epsilon>0$, we see that for all $u\in W^{1,p}(\Omega)$ with $0<||u||\leq 1$ and $||u||_{L^q(\partial\Omega)}\leq 1$, $\varphi_{\lambda}(u)=0$, we have
	\begin{equation}\label{eq89}
		\left.\frac{d}{dt}\varphi_{\lambda}(tu)\right|_{t=1}\geq c_{43}||u||^p-c_{44}||u||^r\ \mbox{for some}\ c_{43},\ c_{44}>0.
	\end{equation}
	
	Recall that $p<r$. Choosing $\rho\in(0,1)$ small, we have
	\begin{equation}\label{eq90}
		\left.\frac{d}{dt}\varphi_{\lambda}(tu)\right|_{t=1}>0\ \mbox{for all}\ 0<||u||\leq\rho,\varphi_{\lambda}(u)=0
	\end{equation}
	(recall that via the trace map, $W^{1,p}(\Omega)$ is embedded continuously into $L^q(\partial\Omega)$).
	
	Now consider $u\in W^{1,p}(\Omega)$ with $0<||u||\leq\rho,\ \varphi_{\lambda}(u)=0$. We will show that
	\begin{equation}\label{eq91}
		\varphi_{\lambda}(tu)\leq 0\ \mbox{for all}\ t\in[0,1].
	\end{equation}
	
	If (\ref{eq91}) is not true, then we can find $t_0\in(0,1)$ such that
	$$\varphi_{\lambda}(t_0u)>0.$$
	
	Since $\varphi_{\lambda}(u)=0$ and $\varphi_{\lambda}(\cdot)$ is continuous, we have
	$$t_*=\min\{t\in[t_0,1]:\varphi_{\lambda}(tu)=0\}>t_0>0.$$
	
	We have
	\begin{equation}\label{eq92}
		\varphi_{\lambda}(tu)>0\ \mbox{for all}\ t\in\left[t_0,t_*\right).
	\end{equation}
	
	We set $y=t_*u$. Then $0<||y||\leq||u||\leq\rho$ and $\varphi_{\lambda}(y)=0$. So, it follows from (\ref{eq90})   that
	\begin{equation}\label{eq93}
		\left.\frac{d}{dt}\varphi_{\lambda}(ty)\right|_{t=1}>0.
	\end{equation}
	
	From (\ref{eq92}) we have
	$$\varphi_{\lambda}(y)=\varphi_{\lambda}(t_*u)=0<\varphi_{\lambda}(tu)\ \mbox{for all}\ t\in\left[t_0,t_*\right)$$
	and this implies that
	\begin{equation}\label{eq94}
		 \left.\frac{d}{dt}\varphi_{\lambda}(ty)\right|_{t=1}=t_*\left.\frac{d}{dt}\varphi_{\lambda}(tu)\right|_{t=t_*}=t_*\lim\limits_{t\rightarrow t^-_*}\frac{\varphi_{\lambda}(tu)}{t-t_*}\leq 0.
	\end{equation}
	
	Comparing (\ref{eq93}) and (\ref{eq94}), we obtain a contradiction. This proves (\ref{eq91}).
	
	We can always choose $\rho\in(0,1)$ small enough so that $K_{\varphi_{\lambda}}\cap\bar{B}_{\rho}=\{0\}$ (here, $\bar{B}_{\rho}=\{v\in W^{1,p}(\Omega):||v||\leq\rho\}$). We consider the deformation $h:[0,1]\times(\varphi_{\lambda}^{0}\cap\bar{B}_{\rho})\rightarrow \varphi_{\lambda}^{0}\cap\bar{B}_{\rho}$ defined by
	$$h(t,u)=(1-t)u\ \mbox{for all}\ (t,u)\in[0,1]\times(\varphi_{\lambda}^{0}\cap\bar{B}_{\rho}).$$
	
	Using (\ref{eq91}), we see easily that this is a well-defined deformation and it implies that the set $\varphi_{\lambda}^{0}\cap\bar{B}_{\rho}$ is contractible in itself.
	
	Let $u\in\bar{B}_{\rho}$ and assume that $\varphi_{\lambda}(u)>0$. We will show that there is a unique $t(u)\in(0,1)$ such that
	\begin{equation}\label{eq95}
		\varphi_{\lambda}(t(u)u)=0.
	\end{equation}
	
	From (\ref{eq84}) and Bolzano's theorem, we see that there exists $t(u)\in(0,1)$ such that (\ref{eq95}) holds. We need to show that $t(u)\in(0,1)$ is unique. Arguing by contradiction, suppose we can find
	\begin{equation}\label{eq96}
		0<t_1=t(u)_1<t_2=t(u)_2<1\ \mbox{such that}\ \varphi_{\lambda}(t_1u)=\varphi_{\lambda}(t_2u)=0.
	\end{equation}
	
	From (\ref{eq91}) we have
	\begin{eqnarray*}
		&&\varphi_{\lambda}(tt_2u)\leq 0\ \mbox{for all}\ t\in[0,1],\\
		&\Rightarrow&\frac{t_1}{t_2}\in(0,1)\ \mbox{is a maximizer of the function}\ t\mapsto \varphi_{\lambda}(tt_2u),\\
		&\Rightarrow&\frac{t_1}{t_2}\ \left.\frac{d}{dt}\varphi_{\lambda}(tt_2u)\right|_{t=\frac{t_1}{t_2}}=\left.\frac{d}{dt}\varphi_{\lambda}(tt_1u)\right|_{t=1}=0,
	\end{eqnarray*}
	which contradicts (\ref{eq90}). Therefore $t(u)\in(0,1)$ for which (\ref{eq95}) holds is indeed unique. Then
	$$\varphi_{\lambda}(tu)<0\ \mbox{for all}\ t\in(0,t(u))\ \mbox{(see (\ref{eq84})) and}\ \varphi_{\lambda}(tu)>0\ \mbox{for all}\ t\in\left(t(u),1\right].$$
	
	Consider the function $\vartheta:\bar{B}_{\rho}\backslash\{0\}\rightarrow[0,1]$ defined by
	$$\vartheta(u)=\left\{\begin{array}{ll}
		1&\mbox{if}\ u\in\bar{B}_{\rho}\backslash\{0\},\ \varphi_{\lambda}(u)\leq 0\\
		t(u)&\mbox{if}\ u\in\bar{B}_{\rho}\backslash\{0\},\ \varphi_{\lambda}(u)>0.
	\end{array}\right.$$
	
	It is easy to see that $\vartheta(\cdot)$ is continuous. Now let $d:\bar{B}_{\rho}\backslash\{0\}\rightarrow(\varphi_{\lambda}^{\circ}\cap\bar{B}_{\rho})\backslash\{0\}$ be the map defined by
	$$d(u)=\left\{\begin{array}{ll}
		u&\mbox{if}\ u\in\bar{B}_{\rho}\backslash\{0\},\
 \varphi_{\lambda}(u)\leq 0\\
		\vartheta(u)u&\mbox{if}\ u\in\bar{B}_{\rho}\backslash\{0\},\ \varphi_{\lambda}(u)>0.
	\end{array}\right.$$
	
	The continuity of $\vartheta(\cdot)$ implies the continuity of $d(\cdot)$. Note that
	$$d|_{(\varphi_{\lambda}^{0}\cap\bar{B}_{\rho})\backslash\{0\}}={\rm id}|_{(\varphi_{\lambda}^{0}\cap\bar{B}_{\rho})\backslash\{0\}}.$$
	
	Hence $(\varphi_{\lambda}^{0}\cap\bar{B}_{\rho})\backslash\{0\}$ is a retract of $\bar{B}_{\rho}\backslash\{0\}$ and the latter is contractible. Thus so is the set $(\varphi_{\lambda}^{0}\cap\bar{B}_{\rho})\backslash\{0\}$. Recall that we have established earlier that $\varphi_{\lambda}^{0}\cap\bar{B}_{\rho}$ is contractible. Therefore we have
	\begin{eqnarray*}
		&&H_k(\varphi_{\lambda}^{0}\cap\bar{B}_{\rho},(\varphi_{\lambda}^{0}\cap\bar{B}_{\rho})\backslash\{0\})=0\ \mbox{for all}\ k\in\NN_0\\
		&&(\mbox{see Motreanu, Motreanu and Papageorgiou \cite[p. 147]{23}})\\
		&\Rightarrow&C_k(\varphi_{\lambda},0)=0\ \mbox{for all}\ k\in\NN_0.
	\end{eqnarray*}
\end{proof}

Recall that $\lambda_0=\min\{\lambda_+,\lambda_-\}$. Next, we show that for every $\lambda\in(0,\lambda_0)$ problem \eqref{eqP} admits a nodal solution.
\begin{prop}\label{prop19}
	If hypotheses $H(a),H(f)',H(\beta)$ hold and $\lambda\in(0,\lambda_0)$, then problem \eqref{eqP} admits a nodal solution $y_0\in[v^*_{\lambda},u^*_{\lambda}]\cap C^1(\overline{\Omega})$.
\end{prop}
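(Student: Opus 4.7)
The strategy is the classical one for producing a nodal solution from extremal constant sign solutions: build a truncation of the reaction and boundary terms at the levels $v_\lambda^*$ and $u_\lambda^*$, realize $v_\lambda^*$ and $u_\lambda^*$ as two local minimizers of the truncated energy $\tilde\varphi_\lambda$, apply a mountain-pass argument between them to obtain a third critical point $y_0$, and then use Proposition \ref{prop18} to exclude $y_0=0$.

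First, fix $\tilde c_2\in(0,c_2^*)$ and define the truncation
$$\tilde f(z,x)=\begin{cases}f(z,v_\lambda^*(z))+\tilde c_2|v_\lambda^*(z)|^{p-2}v_\lambda^*(z)&\text{if }x<v_\lambda^*(z),\\ f(z,x)+\tilde c_2|x|^{p-2}x&\text{if }v_\lambda^*(z)\leq x\leq u_\lambda^*(z),\\ f(z,u_\lambda^*(z))+\tilde c_2 u_\lambda^*(z)^{p-1}&\text{if }u_\lambda^*(z)<x,\end{cases}$$
together with the analogous bounded truncation $\tilde\beta(z,x)$ of the boundary term at the same levels; let $\tilde f_\pm,\tilde\beta_\pm$ be the positive/negative parts. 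Introduce the associated $C^1$-functional
$$\tilde\varphi_\lambda(u)=\int_\Omega G(Du)\,dz+\tfrac{\tilde c_2}{p}\|u\|_p^p-\int_\Omega\tilde F(z,u)\,dz-\lambda\int_{\partial\Omega}\tilde B(z,u)\,d\sigma,$$
and similarly $\tilde\varphi_\lambda^\pm$. By Corollary \ref{cor3} and boundedness of the truncated nonlinearities all three functionals are coercive and sequentially weakly lower semicontinuous. A direct-method argument, exactly parallel to the one in the proof of Proposition \ref{prop14}, shows that the global minimizer of $\tilde\varphi_\lambda^+$ is a positive solution of \eqref{eqP} lying in $[0,u_\lambda^*]$, hence equals $u_\lambda^*$ by extremality; similarly the minimizer of $\tilde\varphi_\lambda^-$ equals $v_\lambda^*$.

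Next, observe that on a $C^1(\overline\Omega)$-neighborhood of $u_\lambda^*\in D_+$ the functionals $\tilde\varphi_\lambda$ and $\tilde\varphi_\lambda^+$ coincide (because small $C^1$-perturbations of $u_\lambda^*$ remain strictly positive, so the lower truncation is inactive), and likewise for $v_\lambda^*$ and $\tilde\varphi_\lambda^-$. Hence $u_\lambda^*,v_\lambda^*$ are local $C^1(\overline\Omega)$-minimizers of $\tilde\varphi_\lambda$, and Proposition \ref{prop5} upgrades them to local $W^{1,p}(\Omega)$-minimizers. Assuming (without loss of generality) that $\tilde\varphi_\lambda(v_\lambda^*)\leq\tilde\varphi_\lambda(u_\lambda^*)$ and that $u_\lambda^*$ is isolated in $K_{\tilde\varphi_\lambda}$ (otherwise a cluster of nontrivial solutions in $[v_\lambda^*,u_\lambda^*]\setminus\{0\}$ already yields infinitely many nodal solutions by extremality), a standard deformation argument produces $\rho\in(0,\|u_\lambda^*-v_\lambda^*\|)$ with
$$\max\{\tilde\varphi_\lambda(v_\lambda^*),\tilde\varphi_\lambda(u_\lambda^*)\}<\inf\{\tilde\varphi_\lambda(u):\|u-u_\lambda^*\|=\rho\}.$$
Since $\tilde\varphi_\lambda$ is coercive it satisfies the C-condition, so Theorem \ref{th1} supplies a critical point $y_0\in K_{\tilde\varphi_\lambda}$ with
$$\tilde\varphi_\lambda(y_0)\geq\max\{\tilde\varphi_\lambda(v_\lambda^*),\tilde\varphi_\lambda(u_\lambda^*)\}\quad\text{and}\quad C_1(\tilde\varphi_\lambda,y_0)\neq 0.$$
In particular $y_0\notin\{v_\lambda^*,u_\lambda^*\}$, and a standard truncation-comparison argument (test with $(y_0-u_\lambda^*)^+$ and with $-(v_\lambda^*-y_0)^+$, using the monotonicity of $A$) shows $y_0\in[v_\lambda^*,u_\lambda^*]$, so $y_0$ solves \eqref{eqP} and the nonlinear regularity of Lieberman \cite{21} gives $y_0\in C^1(\overline\Omega)$.

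The decisive step is to rule out $y_0=0$. On the order interval $[v_\lambda^*,u_\lambda^*]$ we have $\tilde f=f+\tilde c_2|\cdot|^{p-2}\cdot$ and $\tilde\beta=\beta$, so $\tilde\varphi_\lambda$ and $\varphi_\lambda$ differ only by the coercive quadratic perturbation $\tfrac{\tilde c_2}{p}\|u\|_p^p$ (which does not alter the local variational structure at the origin in a substantive way). A short deformation in the spirit of Proposition \ref{prop18} — using again $q<\tau<p<\eta$ to ensure $\tilde\varphi_\lambda(tu)<0$ for small $t>0$, and the same uniqueness of the zero-level parameter $t(u)$ — yields $C_k(\tilde\varphi_\lambda,0)=0$ for all $k\in\NN_0$. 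Since $C_1(\tilde\varphi_\lambda,y_0)\neq 0$, this forces $y_0\neq 0$. Finally, by the extremality of $u_\lambda^*$ and $v_\lambda^*$ any solution in $[v_\lambda^*,u_\lambda^*]$ that is nonnegative (resp.\ nonpositive) and nonzero must coincide with $u_\lambda^*$ (resp.\ $v_\lambda^*$); hence $y_0$ changes sign and is the desired nodal solution in $[v_\lambda^*,u_\lambda^*]\cap C^1(\overline\Omega)$. The main obstacle in this plan is the critical-groups computation for $\tilde\varphi_\lambda$ at the origin, since the truncation alters the global shape of the functional and one has to verify that the homotopy used in Proposition \ref{prop18} can be carried out verbatim for $\tilde\varphi_\lambda$.
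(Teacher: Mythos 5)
Most of your plan coincides with the paper's argument: truncation of $f$ and $\beta$ at the extremal levels $v^*_{\lambda},u^*_{\lambda}$, identification of $v^*_{\lambda},u^*_{\lambda}$ as the only nontrivial critical points of the one-sided truncated functionals (hence local $C^1(\overline{\Omega})$-, and then $W^{1,p}(\Omega)$-, minimizers of the full truncated functional via Proposition \ref{prop5}), a mountain pass between them giving $y_0$ with $C_1(\cdot,y_0)\neq 0$, localization $K\subseteq[v^*_{\lambda},u^*_{\lambda}]$ by testing with $(y_0-u^*_{\lambda})^+$ and $(v^*_{\lambda}-y_0)^+$, regularity, and then ruling out $y_0=0$ through critical groups at the origin. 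Up to that last step your proposal is sound and essentially identical to the paper.

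The gap is exactly the step you flagged and left unverified: the claim $C_k(\tilde\varphi_{\lambda},0)=0$ for all $k\in\NN_0$, obtained by running the argument of Proposition \ref{prop18} ``verbatim'' on the truncated functional. That argument cannot be repeated verbatim, because it rests on global structural hypotheses that the truncation destroys outside the order interval $[v^*_{\lambda},u^*_{\lambda}]$: the lower bound $F(z,x)\geq c_{39}|x|^{\eta}-c_{40}|x|^p$ coming from the AR-condition, the estimate $\tau F(z,x)-f(z,x)x\geq-\epsilon|x|^p-c_{42}|x|^r$, and the inequality $\tau B(z,x)-\beta(z,x)x\geq c_{14}|x|^q$ from $H(\beta)(iv)$. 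For the truncated data these fail for large $|x|$ (for instance, beyond the truncation level $\tilde\beta(z,\cdot)$ is constant in $x$, so $\tau\tilde B(z,x)-\tilde\beta(z,x)x$ grows only linearly and cannot dominate $|x|^q$ with $q>1$), and a small $W^{1,p}$-ball around $0$ contains functions that exceed the truncation levels on sets of positive measure, so the derivative estimate (the analogue of (\ref{eq85})--(\ref{eq90})) is not available without new work. The paper sidesteps this entirely: it proves $C_k(\gamma_{\lambda},0)=C_k(\varphi_{\lambda},0)$ (Claim \ref{cl3}) by the homotopy $h(t,u)=(1-t)\varphi_{\lambda}(u)+t\gamma_{\lambda}(u)$ and the homotopy invariance of critical groups of Corvellec--Hantoute, justifying the invariance through nonlinear regularity: if $h'_u(t_n,u_n)=0$ and $u_n\rightarrow 0$ in $W^{1,p}(\Omega)$, then $\{u_n\}$ is bounded in $C^{1,\alpha}(\overline{\Omega})$, hence $u_n\rightarrow 0$ in $C^1(\overline{\Omega})$, so eventually $u_n\in[v^*_{\lambda},u^*_{\lambda}]$ and therefore $u_n\in K_{\gamma_{\lambda}}$, contradicting the finiteness of $K_{\gamma_{\lambda}}$. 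Combined with Proposition \ref{prop18} for the \emph{original} functional $\varphi_{\lambda}$, this yields $C_k(\gamma_{\lambda},0)=0$ and hence $y_0\neq 0$. To complete your proof you should either import this homotopy-invariance argument or supply genuinely new estimates for the truncated functional near the origin; as written, the decisive step is missing.
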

\begin{proof}
	Let $u^*_{\lambda}\in D_+$ and $v^*_{\lambda}\in-D_+$ be the two extremal constant sign solutions of problem \eqref{eqP} produced in Proposition \ref{prop15}. We introduce the following Carath\'eodory functions
	\begin{eqnarray}
		&&k(z,x)=\left\{\begin{array}{ll}
			f(z,v^*_{\lambda}(z))+|v^*_{\lambda}(z)|^{p-2}v^*_{\lambda}(z)&\mbox{if}\ x<v^*_{\lambda}(z)\\
			f(z,x)+|x|^{p-2}x&\mbox{if}\ v^*_{\lambda}(z)\leq x\leq u^*_{\lambda}(z)\\
			f(z,u^*_{\lambda}(z))+u^*_{\lambda}(z)^{p-1}&\mbox{if}\ u^*_{\lambda}(z)<x
		\end{array}\right.\label{eq97}\\
		&&e(z,x)=\left\{\begin{array}{ll}
			\beta(z,v^*_{\lambda}(z))&\mbox{if}\ x<v^*_{\lambda}(z)\\
			\beta(z,x)&\mbox{if}\ v^*_{\lambda}(z)\leq x\leq u^*_{\lambda}(z)\\
			\beta(z,u^*_{\lambda}(z))&\mbox{if}\ u^*_{\lambda}(z)<x
		\end{array}\right.\ \mbox{for all}\ (z,x)\in\partial\Omega\times\RR\,.\label{eq98}
	\end{eqnarray}
	
	We set $K(z,x)=\int^x_0k(z,s)ds$ and $E(z,x)=\int^x_0e(z,s)ds$ and consider the $C^1$-functional $\gamma_{\lambda}:W^{1,p}(\Omega)\rightarrow\RR$ defined by
	 $$\gamma_{\lambda}(u)=\int_{\Omega}G(Du)dz+\frac{1}{p}||u||^p_p-\int_{\Omega}K(z,u)dz-\lambda\int_{\partial\Omega}E(z,u)d\sigma\ \mbox{for all}\ u\in W^{1,p}(\Omega).$$
	
	Also, we consider the positive and negative truncations of $k(z,\cdot),\ e(z,\cdot)$, that is, the Carath\'eodory functions
	$$k_{\pm}(z,x)=k(z,\pm x^{\pm})\ \mbox{and}\ e_{\pm}(z,x)=e(z,\pm x^{\pm}).$$
	
	We set $K_{\pm}(z,x)=\int^x_0k_{\pm}(z,s)ds$ and $E_{\pm}(z,x)=\int^x_0e_{\pm}(z,s)ds$ and consider the $C^1$ - functionals $\gamma^{\pm}_{\lambda}:W^{1,p}(\Omega)\rightarrow\RR$ defined by
	\begin{eqnarray*}
		 &&\gamma^{\pm}_{\lambda}(u)=\int_{\Omega}G(Du)dz+\frac{1}{p}||u||^p_p-\int_{\Omega}K_{\pm}(z,u)dz-\lambda\int_{\partial\Omega}E_{\pm}(z,u)d\sigma\ \mbox{for all}\ u\in W^{1,p}(\Omega).
	\end{eqnarray*}
	
	\begin{claim}\label{cl1}
		$K_{\gamma_{\lambda}}\subseteq[v^*_{\lambda},u^*_{\lambda}],
 K_{\gamma^+_{\lambda}}=\{0,u^*_{\lambda}\},\ K_{\gamma^-_{\lambda}}=\{0,v^*_{\lambda}\}$.
	\end{claim}
	
	Suppose that $u\in K_{\gamma_{\lambda}}$. Then
	\begin{equation}\label{eq99}
		\left\langle A(u),h\right\rangle+\int_{\Omega}|u|^{p-2}uhdz=\int_{\Omega}k(z,u)hdz+\lambda\int_{\partial\Omega}e(z,u)hd\sigma\ \mbox{for all}\ h\in W^{1,p}(\Omega).
	\end{equation}
	
	In (\ref{eq99}) first we choose $h=(u-u^*_{\lambda})^+\in W^{1,p}(\Omega)$. We obtain
	\begin{eqnarray*}
		&&\left\langle A(u),(u-u^*_{\lambda})^+\right\rangle+\int_{\Omega}|u|^{p-2}u(u-u^*_{\lambda})^+dz\\
		 &=&\int_{\Omega}[f(z,u^*_{\lambda})+(u^*_{\lambda})^{p-1}](u-u^*_{\lambda})^+dz+\lambda\int_{\partial\Omega}\beta(z,u^*_{\lambda})(u-u^*_{\lambda})^+d\sigma\ (\mbox{see (\ref{eq97}), (\ref{eq98})})\\
		&=&\left\langle A(u^*_{\lambda}),(u-u^*_{\lambda})^+\right\rangle+\int_{\Omega}(u^*_{\lambda})^{p-1}(u-u^*_{\lambda})^+dz\ (\mbox{since}\ u^*_{\lambda}\in S_+(\lambda)),\\
		\Rightarrow&&\left\langle A(u)-A(u^*_{\lambda}),(u-u^*_{\lambda})^+\right\rangle+\int_{\Omega}[|u|^{p-2}u-(u^*_{\lambda})^{p-1}](u-u^*_{\lambda})^+dz\leq 0,\\
		\Rightarrow&&u\leq u^*_{\lambda}.
	\end{eqnarray*}
	
	Similarly, if in (\ref{eq99}) we choose $h=(v^*_{\lambda}-u)^+\in W^{1,p}(\Omega)$, then we can show that
	$$v^*_{\lambda}\leq u.$$
	
	So, we have proved that
	\begin{eqnarray*}
		&&u\in[v^*_{\lambda},u^*_{\lambda}],\\
		&\Rightarrow&K_{\gamma_{\lambda}}\subseteq[v^*_{\lambda},u^*_{\lambda}].
	\end{eqnarray*}
	
	Similarly, we show that
	$$K_{\gamma^+_{\lambda}}\subseteq[0,u^*_{\lambda}]\ \mbox{and}\ K_{\gamma^-_{\lambda}}\subseteq[v^*_{\lambda},0].$$
	
	The extremality of the constant sign solutions $u^*_{\lambda}$ and $v^*_{\lambda}$, implies that
	$$K_{\gamma^+_{\lambda}}=\{0,u^*_{\lambda}\}\ \mbox{and}\ K_{\gamma^-_{\lambda}}=\{0,v^*_{\lambda}\}.$$
	
	This proves Claim \ref{cl1}.
	
	\begin{claim}\label{cl2}
		$u^*_{\lambda}\in D_+$ and $v^*_{\lambda}\in-D_+$ are local minimizers of $\gamma_{\lambda}$.
	\end{claim}
	
	Corollary \ref{cor3} and (\ref{eq97}), (\ref{eq98}) imply that $\gamma^+_{\lambda}$ is coercive. Also, it is sequentially weakly lower semicontinuous. So, we can find $\tilde{u}_{\lambda}\in W^{1,p}(\Omega)$ such that
	\begin{equation}\label{eq100}
		\gamma^+_{\lambda}(\tilde{u}_{\lambda})=\inf[\gamma_{\lambda}(u):u\in W^{1,p}(\Omega)].
	\end{equation}
	
	As before (see the proof of Proposition \ref{prop12}), since $q<\tau<p<\eta$, we have
	\begin{eqnarray}\label{eq101}
	&&\gamma^+_{\lambda}(\tilde{u}_{\lambda})<0=\gamma^+_{\lambda}(0),\nonumber\\
	&\Rightarrow&\tilde{u}_{\lambda}\neq 0.
	\end{eqnarray}
	
	From (\ref{eq100}) we have $\tilde{u}_{\lambda}\in K_{\gamma^+_{\lambda}}$. Then Claim \ref{cl1} and (\ref{eq101}) imply that
	$$\tilde{u}_{\lambda}=u^*_{\lambda}\in D_+.$$
	
	Note that
	\begin{eqnarray*}
		&&\gamma_{\lambda}|_{C_+}=\gamma^+_{\lambda}|_{C_+}\\
		&\Rightarrow&u^*_{\lambda}\ \mbox{is a local}\ C^1(\overline{\Omega})-\mbox{minimizer of}\ \gamma_{\lambda},\\
		&\Rightarrow&u^*_{\lambda}\ \mbox{is a local}\ W^{1,p}(\Omega)-\mbox{minimizer of}\ \gamma_{\lambda}\ (\mbox{see Proposition \ref{prop4}}).
	\end{eqnarray*}
	
	Similarly, for $v^*_{\lambda}\in-D_+$, using this time the functional $\gamma^-_{\lambda}$.
	This proves Claim \ref{cl2}.
	
	Without any loss of generality we may assume that
	$$\gamma_{\lambda}(v^*_{\lambda})\leq\gamma_{\lambda}(u^*_{\lambda}).$$
	
	The reasoning is similar if the opposite inequality holds.
	
	We assume that $K_{\gamma_{\lambda}}$ is finite. Otherwise, on account of Claim \ref{cl1} and  (\ref{eq97}), (\ref{eq98}), we already have an infinity of nodal solutions in $C^1(\overline{\Omega})$ (by the nonlinear regularity theory of Lieberman \cite{21}). Then since $u^*_{\lambda}\in D_+$ is a local minimizer of $\gamma_{\lambda}$ (see Claim \ref{cl2}), we can find $\rho\in(0,1)$ so small that
	\begin{equation}\label{eq102}
		\gamma_{\lambda}(v^*_{\lambda})\leq \gamma_{\lambda}(u^*_{\lambda})<\inf[\gamma_{\lambda}(u):||u-u^*_{\lambda}||=\rho]=m_{\lambda}
	\end{equation}
	(see Aizicovici, Papageorgiou and Staicu \cite{1}, proof of Proposition 29).
	
	The functional $\gamma_{\lambda}$ is coercive (see (\ref{eq97}), (\ref{eq98})). So, we have that
	\begin{equation}\label{eq103}
		\gamma_{\lambda}\ \mbox{satisfies the $C$-condition}
	\end{equation}
	(see Papageorgiou and Winkert \cite{32}). Then (\ref{eq102}), (\ref{eq103}) permit the use of Theorem \ref{th1} (the mountain pass theorem). So, we can find $y_0\in W^{1,p}(\Omega)$ such that
	\begin{equation}\label{eq104}
		y_0\in K_{\gamma_{\lambda}}\ \mbox{and}\ m_{\lambda}\leq \gamma_{\lambda}(y_0).
	\end{equation}
	
	From (\ref{eq102}) and (\ref{eq104}) we see that
	\begin{equation}\label{eq105}
		y_0\notin\{u^*_{\lambda},v^*_{\lambda}\}\ \mbox{and}\ y_0\in C^1(\overline{\Omega})\ (\mbox{nonlinear regularity theory}).
	\end{equation}
	
	Moreover, Corollary 6.81, p. 168 of Motreanu, Motreanu and Papageorgiou \cite{23} implies that
	\begin{equation}\label{eq106}
		C_1(\gamma_{\lambda},y_0)\neq 0.
	\end{equation}
	\begin{claim}\label{cl3}
		$C_k(\gamma_{\lambda},0)=C_k(\varphi_{\lambda},0)$ for all $k\in\NN_0$.
	\end{claim}
	
	We consider the homotopy $h(t,u)$ defined by
	$$h(t,u)=(1-t)\varphi_{\lambda}(u)+t\gamma_{\lambda}(u)\ \mbox{for all}\ (t,u)\in[0,1]\times W^{1,p}(\Omega).$$
	
	Suppose that we could find $\{t_n\}_{n\geq 1}\subseteq[0,1]$ and $\{u_n\}_{n\geq 1}\subseteq W^{1,p}(\Omega)$ such that
	\begin{equation}\label{eq107}
		t_n\rightarrow t\ \mbox{in}\ [0,1],u_n\rightarrow 0\ \mbox{in}\ W^{1,p}(\Omega)\ \mbox{and}\ h'_u(t_n,u_n)=0\ \mbox{for all}\ n\in\NN.
	\end{equation}
	
	From the equality in (\ref{eq107}), we have
	\begin{eqnarray*}
		&&\left\langle A(u_n),h\right\rangle+t_n\int_{\Omega}|u_n|^{p-2}u_nhdz\\
		 &&=(1-t_n)\int_{\Omega}f(z,u_n)hdz+t_n\int_{\Omega}k(z,u_n)hdz+\lambda\int_{\partial\Omega}[(1-t_n)\beta(z,u_n)+t_ne(z,u_n)]d\sigma\\
		&&\mbox{for all}\ n\in\NN,\ \mbox{all}\ h\in W^{1,p}(\Omega).
	\end{eqnarray*}
	
	It follows (see Papageorgiou and R\u adulescu \cite{28}) that
	\begin{eqnarray}\label{eq108}
		&&\left\{\begin{array}{l}
			-{\rm div}\,a (Du_n(z))+t_n|u_n(z)|^{p-2}u_n(z)=(1-t_n)f(z,u_n(z))+t_nk(z,u_n(z))\\
			\mbox{for almost all}\ z\in\Omega,\\
			\frac{\partial u_n}{\partial n_a}=\lambda[(1-t_n)\beta(z,u_n)+t_ne(z,u_n)]\\
			\mbox{on}\ \partial\Omega,\ n\in\NN.
		\end{array}\right\}
	\end{eqnarray}
	
	From (\ref{eq107}), (\ref{eq108}), we see that we can find $c_{45}>0$ such that
	\begin{equation}\label{eq109}
		||u_n||_{\infty}\leq c_{45}\ \mbox{for all}\ n\in\NN
	\end{equation}
	(see \cite{19} and \cite{30}). This $L^{\infty}$-bound permits the use of the nonlinear regularity theory of Lieberman \cite{21}, hence there exist $\alpha\in(0,1)$ and $c_{46}>0$ such that
	\begin{equation}\label{eq110}
		u_n\in C^{1,\alpha}(\overline{\Omega})\ \mbox{and}\ ||u_n||_{C^{1,\alpha}(\overline{\Omega)}}\leq c_{46}\ \mbox{for all}\ n\in\NN.
	\end{equation}
	
	From (\ref{eq107}), (\ref{eq110}) and the compact embedding of $C^{1,\alpha}(\overline{\Omega})$ into $C^1(\overline{\Omega})$, we have
	\begin{eqnarray}\label{eq111}
		&&u_n\rightarrow 0\ \mbox{in}\ C^1(\overline{\Omega})\nonumber,\\
		&\Rightarrow&u_n\in[v^*_{\lambda},u^*_{\lambda}]\ \mbox{for all}\ n\geq n_0.
	\end{eqnarray}
	
It follows	from (\ref{eq97}), (\ref{eq98}), (\ref{eq108}), (\ref{eq111}) that
	$$u_n\in K_{\gamma_{\lambda}}\ \mbox{for all}\ n\geq n_0,$$
	which contradicts the assumption that $K_{\gamma_{\lambda}}$ is finite. So, (\ref{eq107}) cannot occur and we can use Theorem 5.2 of Corvellec and Hantoute \cite{8} (the homotopy invariance of critical groups) and obtain
	\begin{eqnarray*}
		&&C_k(h(0,\cdot),0)=C_k(h(1,\cdot),0)\ \mbox{for all}\ k\in\NN_0,\\
		&\Rightarrow&C_k(\varphi_{\lambda},0)=C_k(\gamma_{\lambda},0)\ \mbox{for all}\ k\in\NN_0.
	\end{eqnarray*}
	
	This proves Claim \ref{cl3}.
	
	From Claim \ref{cl3} and Proposition \ref{prop18}, we have
	\begin{equation}\label{eq112}
		C_k(\gamma_{\lambda},0)=0\ \mbox{for all}\ k\in\NN_0.
	\end{equation}
	
	Comparing (\ref{eq106}) and (\ref{eq112}), we see that
	\begin{eqnarray*}
		&&y_0\neq 0,\\
		&\Rightarrow&y_0\in[v^*_{\lambda},u^*_{\lambda}]\cap C^1(\overline{\Omega})\ \mbox{is a nodal solution of \eqref{eqP} (see (\ref{eq105}))}.
	\end{eqnarray*}
\end{proof}

Summarizing the situation for problem \eqref{eqP}, we can state the following multiplicity theorem.
\begin{theorem}\label{th20}
	If hypotheses $H(a),H(f)',H(\beta)$ hold, then there exists $\lambda_0>0$ such that for every $\lambda\in(0,\lambda_0)$ problem \eqref{eqP} has at least five nontrivial smooth solutions
	$$u_0,\hat{u}\in D_+,\ v_0,\hat{v}\in-D_+,\ y_0\in C^1(\overline{\Omega})\ \mbox{nodal}.$$
	Moreover, for every $\lambda\in(0,\lambda_0)$, problem \eqref{eqP} has extremal constant sign solutions
	$$u^*_{\lambda}\in D_+\ \mbox{and}\ v^*_{\lambda}\in-D_+$$
	such that $y_0\in[v^*_{\lambda},u^*_{\lambda}]\cap C^1(\overline{\Omega})$ and the map $\lambda\mapsto u^*_{\lambda}$ is
	\begin{itemize}
		\item strictly increasing (that is, $\mu<\lambda\Rightarrow u^*_{\lambda}-u^*_{\mu}\in {\rm int}\, \hat{C}_+$),
		\item left continuous from $(0,\lambda_0)$ into $C^1(\overline{\Omega})$,
	\end{itemize}
	while the map $\lambda\mapsto v^*_{\lambda}$ is
	\begin{itemize}
		\item	strictly decreasing (that is, $\mu<\lambda\Rightarrow v^*_{\mu}-v^*_{\lambda}\in {\rm int}\,\hat{C}_+$),
		\item right continuous.
	\end{itemize}
\end{theorem}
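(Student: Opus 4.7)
The plan is to simply assemble the four key propositions that have already been established in this section. Set $\lambda_0 = \min\{\lambda_+,\lambda_-\}$, where $\lambda_\pm$ are the thresholds produced by Proposition \ref{prop10} (the mountain pass geometry for $\hat{\varphi}^\pm_\lambda$). For any fixed $\lambda\in(0,\lambda_0)$, Proposition \ref{prop12}(c) yields the four constant sign solutions $u_0,\hat u\in D_+$ and $v_0,\hat v\in -D_+$, via a local minimizer argument for $\hat\varphi_\lambda^\pm$ in a small ball (using $q<\tau<p$ to get negative infimum, plus the $D_+$ regularity from the nonlinear regularity theory of Lieberman \cite{21} and the strong maximum principle of Pucci--Serrin \cite{33}), combined with a mountain pass application based on Propositions \ref{prop8}, \ref{prop10}, \ref{prop11}.

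Next, Proposition \ref{prop15} (which uses the auxiliary problem \eqref{eqA} solved uniquely in Proposition \ref{prop13} and the lower/upper bound in Proposition \ref{prop14}) delivers the extremal constant sign solutions $u_\lambda^*\in D_+$ and $v_\lambda^*\in -D_+$, built as monotone limits of decreasing (resp. increasing) sequences in $S_+(\lambda)$ (resp. $S_-(\lambda)$), using that these solution sets are downward/upward directed and that the operator $A$ is of type $(S)_+$ (Proposition \ref{prop4}).

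To produce the nodal solution $y_0\in[v_\lambda^*,u_\lambda^*]\cap C^1(\overline\Omega)$, I invoke Proposition \ref{prop19}: truncate $f$ and $\beta$ at $v_\lambda^*$ and $u_\lambda^*$ to build a coercive functional $\gamma_\lambda$ whose critical set lies in the order interval $[v_\lambda^*,u_\lambda^*]$; $u_\lambda^*$ and $v_\lambda^*$ become local minimizers of $\gamma_\lambda$ (via Proposition \ref{prop5}, passing from $C^1(\overline\Omega)$-minimizers to $W^{1,p}(\Omega)$-minimizers), so Theorem \ref{th1} yields a mountain pass critical point $y_0$ with $C_1(\gamma_\lambda,y_0)\ne 0$. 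A homotopy invariance argument (Corvellec--Hantoute \cite{8}) together with Proposition \ref{prop18} gives $C_k(\gamma_\lambda,0)=C_k(\varphi_\lambda,0)=0$, so $y_0\ne 0$; the extremality of $u_\lambda^*,v_\lambda^*$ forces $y_0$ to change sign, while Lieberman's regularity theory \cite{21} gives $y_0\in C^1(\overline\Omega)$.

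Finally, the monotonicity and one-sided continuity of $\lambda\mapsto u_\lambda^*$ and $\lambda\mapsto v_\lambda^*$ are exactly the content of Proposition \ref{prop17}, whose proof rests on Proposition \ref{prop16} (production of a comparable solution at the smaller parameter) and on the strong comparison principle in Proposition \ref{prop6}, combined with a compactness argument in $C^{1,\alpha}(\overline\Omega)$ via Lieberman's bounds \cite{21}. The main obstacle---already resolved above---is the construction of the nodal solution, since it requires the full chain of extremality, critical groups at zero, and homotopy invariance; the rest of the theorem is a direct amalgamation of already proved statements. This completes the proof.
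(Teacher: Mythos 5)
Your proposal is correct and mirrors the paper exactly: the paper states Theorem \ref{th20} as a summary, with $\lambda_0=\min\{\lambda_+,\lambda_-\}$, assembling Proposition \ref{prop12} (four constant sign solutions), Proposition \ref{prop15} (extremal solutions, via Propositions \ref{prop13}, \ref{prop14}), Proposition \ref{prop19} (the nodal solution $y_0\in[v^*_{\lambda},u^*_{\lambda}]\cap C^1(\overline{\Omega})$, via Propositions \ref{prop18} and the homotopy invariance of critical groups), and Proposition \ref{prop17} (monotonicity and one-sided continuity, via Propositions \ref{prop16} and \ref{prop6}). No gaps; your account of how extremality forces $y_0$ to be sign changing is the intended argument.
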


In the next section, we show that in the semilinear case, we can improve this theorem and produce a sixth nontrivial smooth solution $\hat{y}$, but we cannot provide any sign information for it.

\section{Semilinear Problem}

In this section we deal with the semilinear problem
\begin{equation}
	\left\{\begin{array}{ll}
		-\Delta u(z)=f(z,u(z))&\mbox{in}\ \Omega,\\
		\frac{\partial u}{\partial n}=\lambda\beta(z,u)&\mbox{on}\ \partial\Omega\,.
	\end{array}\right\}\tag{$S_{\lambda}$}\label{eqS}
\end{equation}

We strengthen the regularity hypotheses on the reaction term $f(z,\cdot)$ and on the boundary (source) term $\beta(z,\cdot)$ and by using Morse theory we are able to generate a sixth nontrivial smooth solution. However, we cannot provide any sign information for this new solution.
	
	In this case the energy (Euler) functional of problem \eqref{eqS} is $\varphi_{\lambda}:H^1(\Omega)\rightarrow\RR$  defined by
	$$\varphi_{\lambda}(u)=\frac{1}{2}||Du||^2_2-\int_{\Omega}F(z,u)dz-\lambda\int_{\partial\Omega}B(z,u)d\sigma\ \mbox{for all}\ u\in H^1(\Omega).$$
	
	Hypotheses $H(f)''$ and $H(\beta)'$ imply that $\varphi_{\lambda}\in C^2(H^1(\Omega)\backslash\{0\})$. Under these hypotheses we can show that problem \eqref{eqS} has six nontrivial smooth solutions for all small $\lambda>0$.
\begin{theorem}\label{th21}
	If hypotheses $H(f)'',\ H(\beta)'$ hold, then we can find $\lambda_0>0$ such that for every $\lambda\in(0,\lambda_0)$ problem \eqref{eqS} has at least six nontrivial smooth solutions
		\begin{eqnarray*}
				&&u_0,\hat{u}\in D_+,\ v_0,\hat{v}\in-D_+\\
				&&y_0\in C^1(\overline{\Omega})\ \mbox{nodal and}\ \hat{y}\in C^1(\overline{\Omega}).
		\end{eqnarray*}
\end{theorem}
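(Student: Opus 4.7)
The plan is to first extract the five solutions of Theorem~\ref{th20} by rerunning the arguments of Sections 3 and 4 in the semilinear setting, and then invoke Morse theory on the $C^2$ functional $\varphi_\lambda$ (which under $H(f)''$ and $H(\beta)'$ belongs to $C^2(H^1(\Omega)\setminus\{0\})$) to force a sixth critical point via the Morse identity (\ref{eq4}). In the semilinear case the local shift-monotonicity $H(f)''(iv)$ combined with the classical strong maximum principle replaces Proposition~\ref{prop6}, as the authors note in the remark following $H(f)''$; this makes Propositions~\ref{prop8}--\ref{prop19} applicable, yielding, for some $\lambda_0>0$ and every $\lambda\in(0,\lambda_0)$, local minimizers $u_0\in D_+$ and $v_0\in -D_+$, mountain-pass solutions $\hat u\in D_+$ and $\hat v\in -D_+$, extremal constant-sign solutions $u^*_\lambda\in D_+$, $v^*_\lambda\in -D_+$, and a nodal $y_0\in[v^*_\lambda,u^*_\lambda]\cap C^1(\overline\Omega)$.

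Assume, toward a contradiction, that $K_{\varphi_\lambda}=\{0,u_0,v_0,\hat u,\hat v,y_0\}$. I would then compute all critical groups. At the origin, the proof of Proposition~\ref{prop18} transfers verbatim (the chain $q<\tau<p<\eta$ specializes to $q<2<\eta$), giving $C_k(\varphi_\lambda,0)=0$ for all $k\ge 0$. The local minimizers $u_0$ and $v_0$ (produced by the truncation method of Proposition~\ref{prop12}) contribute $C_k(\varphi_\lambda,u_0)=C_k(\varphi_\lambda,v_0)=\delta_{k,0}\ZZ$. Each of $\hat u,\hat v,y_0$ is nonzero and is of mountain-pass type for a truncated functional ($\hat\varphi^{\pm}_\lambda$, resp.\ $\gamma_\lambda$) that coincides with $\varphi_\lambda$ on an $H^1$-neighborhood of the respective point; hence $C_1(\varphi_\lambda,\cdot)\neq 0$ at each of them. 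Since $\varphi_\lambda$ is $C^2$ near these nonzero points, Chang's Morse-theoretic classification of mountain-pass critical points for $C^2$ functionals yields $C_k(\varphi_\lambda,\hat u)=C_k(\varphi_\lambda,\hat v)=C_k(\varphi_\lambda,y_0)=\delta_{k,1}\ZZ$. Finally, the AR condition $H(f)''(ii)$ forces $\varphi_\lambda(tu)\to -\infty$ uniformly on compact subsets of the unit sphere in $H^1(\Omega)$, so for $c$ sufficiently negative the sublevel set $\varphi_\lambda^c$ is a strong deformation retract of the (contractible) space $H^1(\Omega)\setminus\{0\}$, yielding $C_k(\varphi_\lambda,\infty)=H_k(H^1(\Omega),\varphi_\lambda^c)=0$ for every $k\ge 0$.

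Substituting into (\ref{eq4}) gives
$$2+3t=(1+t)\,Q(t),\qquad Q(t)=\sum_{k\ge 0}\beta_k t^k,\ \beta_k\in\NN_0.$$
Setting $t=-1$ produces the contradiction $-1=0$, so $K_{\varphi_\lambda}$ must contain a further point $\hat y$; standard elliptic regularity then places $\hat y\in C^1(\overline\Omega)$. No sign information is available because the argument is purely topological.

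The principal obstacle is upgrading $C_1(\varphi_\lambda,y_0)\neq 0$ to the sharp identification $C_k(\varphi_\lambda,y_0)=\delta_{k,1}\ZZ$. Two ingredients are required: first, the localization $y_0\in {\rm int}_{C^1}[v^*_\lambda,u^*_\lambda]$ (obtained from the classical strong maximum principle applied to $u^*_\lambda-y_0$ and $y_0-v^*_\lambda$, using $H(f)''(iv)$), so that $\gamma_\lambda$ and $\varphi_\lambda$ coincide on a $C^1$-neighborhood of $y_0$ and, by Proposition~\ref{prop5}, on an $H^1$-neighborhood; and second, Chang's shifting theorem applied to the $C^2$ critical point $y_0$, which must be handled carefully if the Hessian $\varphi_\lambda''(y_0)$ is degenerate. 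Analogous but simpler localizations are needed at $\hat u,\hat v$, and the deformation argument for $C_k(\varphi_\lambda,\infty)=0$ must accommodate the sublinear boundary term alongside the superlinear reaction.
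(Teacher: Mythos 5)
Your proposal is correct and follows essentially the same route as the paper: extract the five solutions from the quasilinear theorem, compute $C_k(\varphi_\lambda,u_0)=C_k(\varphi_\lambda,v_0)=\delta_{k,0}\ZZ$, use the classical strong maximum principle with $H(f)''(iv)$ to place $y_0$ in ${\rm int}_{C^1(\overline\Omega)}[v^*_\lambda,u^*_\lambda]$ so that $\gamma_\lambda$ and $\varphi_\lambda$ share critical groups there, upgrade $C_1\neq 0$ at $\hat u,\hat v,y_0$ to $\delta_{k,1}\ZZ$ via the $C^2$ mountain-pass classification (the paper cites Motreanu--Motreanu--Papageorgiou, Corollary 6.102, which is exactly the tool you describe), take $C_k(\varphi_\lambda,0)=0$ from Proposition~\ref{prop18} and $C_k(\varphi_\lambda,\infty)=0$ from the AR condition (the paper cites \cite[Proposition 13]{29} where you sketch the deformation), and derive the contradiction $-1=0$ from the Morse relation at $t=-1$. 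The only differences are presentational: you sketch proofs of facts the paper handles by citation, and your flagged concerns (degeneracy at $y_0$, the deformation at infinity) are precisely the points those citations resolve.
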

\begin{proof}
	From Theorem \ref{th20}, we know that we can find $\lambda_0>0$ such that for all $\lambda\in(0,\lambda_0)$ problem \eqref{eqS} has five nontrivial smooth solutions
	$$u_0,\hat{u}\in D_+,\ v_0,\hat{v}\in-D_+\ \mbox{and}\ y_0\in[v_0,u_0]\cap C^1(\overline{\Omega})\ \mbox{nodal}.$$
	
	From the proof of Proposition \ref{prop12}, we know that $u_0\in D_+$ and $v_0\in-D_+$ are local minimizers of $\varphi_{\lambda}$ and so we have
	\begin{equation}\label{eq113}
		C_k(\varphi_{\lambda},u_0)=C_k(\varphi_{\lambda},v_0)=\delta_{k,0}\ZZ\ \mbox{for all}\ k\in\NN_0.
	\end{equation}
	
	Let $\rho=\max\{||u_0||_{\infty},||v_0||_{\infty}\}$ and let $\hat{\xi}_{\rho}>0$ be as postulated by hypothesis $H(f)(iv)$. We have
	\begin{eqnarray*}
		&-\Delta y_0(z)+\hat{\xi}_{\rho}y_0(z)&=f(z,y_0(z))+\hat{\xi}_{\rho}y_0(z)\\
		&&\leq f(z,u_0(z))+\hat{\xi}_{\rho}u_0(z)=-\Delta u_0(z)+\hat{\xi}_{\rho}u_0(z)\ \mbox{for almost all}\ z\in\Omega,\\
		&\Rightarrow\Delta(u_0-y_0)(z)&\leq\hat{\xi}_{\rho}(u_0-y_0)(z)\ \mbox{for almost all}\ z\in\Omega,\\
		&\Rightarrow u_0-y_0\in D_+&(\mbox{by the strong maximum principle}).
	\end{eqnarray*}
	
	Similarly, we show that
	$$y_0-v_0\in D_+.$$
	
	Therefore we can assert that
	\begin{equation}\label{eq114}
		y_0\in {\rm int}_{C^1(\overline{\Omega})}[v_0,u_0].
	\end{equation}
	
	Keeping the notation of the previous section (see the proof of Proposition \ref{prop19}), and assuming without any loss of generality that $u_0,v_0$ are extremal constant sign solutions (see Proposition \ref{prop15}), we have
	$$\gamma_{\lambda}|_{[v_0,u_0]}=\varphi_{\lambda}|_{[v_0,u_0]}.$$
	
	Then it follows from (\ref{eq114}) that
	\begin{eqnarray*}
		&&C_k(\gamma_{\lambda}|_{C^1(\overline{\Omega})},y_0)=C_k(\varphi_{\lambda}|_{C^1(\overline{\Omega})},y_0)\ \mbox{for all}\ k\in\NN_0,\\
		&\Rightarrow&C_k(\gamma_{\lambda},y_0)=C_k(\varphi_{\lambda},y_0)\ \mbox{for all}\ k\in\NN_0
	\end{eqnarray*}
	(since $C^1(\overline{\Omega})$ is dense in $H^1(\Omega)$, see Chang \cite[p. 14]{5} and Palais \cite{25}),
	\begin{equation}\label{eq115}
		\Rightarrow C_1(\varphi_{\lambda},y_0)\neq 0
	\end{equation}
	(since $y_0$ is a critical point of mountain pass type of $\gamma_{\lambda}$).
	
	Since $\varphi_{\lambda}\in C^2(H^1(\Omega)\backslash\{0\})$, it follows from (\ref{eq115}) that
	\begin{equation}\label{eq116}
		C_k(\varphi_{\lambda},y_0)=\delta_{k,1}\ZZ\ \mbox{for all}\ k\in\NN_0
	\end{equation}
	(see Motreanu, Motreanu and Papageorgiou \cite{23}, Corollary 6.102, p. 177).
	
	Similarly, from the proof of Proposition \ref{prop12} and keeping the notion introduced there,
	\begin{eqnarray*}
		&&\hat{u}\in D_+\ \mbox{is a critical point of mountain pass type of}\ \hat{\varphi}^+_{\lambda},\\
		&&\hat{v}\in-D_+\ \mbox{is a critical point of mountain pass type of}\ \hat{\varphi}^-_{\lambda}.
	\end{eqnarray*}
	
	Since $\hat{\varphi}^+_{\lambda}|_{C_+}=\varphi_{\lambda}|_{C_+}$ and $\hat{\varphi}^-_{\lambda}|_{-C_+}=\varphi_{\lambda}|_{-C_+}$, as above we have
	\begin{equation}\label{eq117}
		C_k(\varphi_{\lambda},\hat{u})=C_k(\varphi_{\lambda},\hat{v})=\delta_{k,1}\ZZ\ \mbox{for all}\ k\in\NN_0.
	\end{equation}
	
	From Proposition \ref{prop18}, we know that
	\begin{equation}\label{eq118}
		C_k(\varphi_{\lambda},0)=0\ \mbox{for all}\ k\in\NN_0.
	\end{equation}
	
	Finally, hypothesis $H(f)''(ii)$ implies that
	\begin{equation}\label{eq119}
		C_k(\varphi_{\lambda},\infty)=0\ \mbox{for all}\ k\in\NN_0
	\end{equation}
	(see Papageorgiou and R\u adulescu \cite[Proposition 13]{29}). Suppose that $$K_{\varphi_{\lambda}}=\{0,u_0,\hat{u},v_0,\hat{v},y_0\}.$$ Then using (\ref{eq113}), (\ref{eq116}), (\ref{eq117}), (\ref{eq118}), (\ref{eq119}) and the Morse relation (see (\ref{eq4})) with $t=-1$, we obtain
	\begin{eqnarray*}
		&&2(-1)^0+2(-1)^1+(-1)^1=0,\\
		&\Rightarrow&(-1)^1=0,\ \mbox{a contradiction}
	\end{eqnarray*}
	
	So, there exists $\hat{y}\in K_{\varphi_{\lambda}},\hat{y}\notin\{0,u_0,\hat{u},v_0,\hat{v},y_0\}$. Then $\hat{y}$ is the sixth nontrivial solution of \eqref{eqS} and $\hat{y}\in C^1(\overline{\Omega})$ (by regularity theory).
\end{proof}
	
 \medskip
{\bf Acknowledgments.} The authors wish to thank the two knowledgeable referees for their comments, remarks and constructive criticism.	This research was supported by the Slovenian Research Agency grants P1-0292, J1-8131, N1-0083, and N1-0064. V.D. R\u adulescu acknowledges the support through a grant of the Romanian Ministry of Research and Innovation, CNCS - UEFISCDI, project number PN-III-P4-ID-PCE-2016-0130, within PNCDI III.

\end{document}